\numberwithin{equation}{section}
\theoremstyle{plain}
\newtheorem{thm}{Theorem}
\newtheorem{lem}{Lemma}
\newcommand{\wtd}{\widetilde}
\begin{document}

\begin{center}
{\bf THRESHOLDING THE HIGHER CRITICISM TEST STATISTIC FOR OPTIMALITY IN A HETEROGENEOUS SETTING}

\medskip 
Hock Peng Chan \\
\it{Department of Statistics and Data Science} \\
\it{National University of Singapore}
\end{center}

\begin{abstract}
Donoho and Kipnis (2022) showed that the the higher criticism (HC) test statistic has
a non-Gaussian phase transition but remarked that it is probably not optimal, 
in the detection of sparse differences between two large frequency tables when the counts are low.
The setting can be considered to be heterogeneous, 
with cells containing larger total counts more able to detect smaller differences.
We provide a general study here of sparse detection arising from such heterogeneous settings,
and showed that optimality of the HC test statistic requires thresholding,
for example in the case of frequency table comparison,
to restrict to p-values of cells with total counts exceeding a threshold.
The use of thresholding also leads to optimality of the HC test statistic when it is applied on the sparse Poisson means model
of Arias-Castro and Wang (2015). 
The phase transitions we consider here are non-Gaussian, 
and involve an interplay between the rate functions of the response and sample size distributions.
We also showed, both theoretically and in a numerical study,
that applying thresholding to the Bonferroni test statistic results in better sparse mixture detection in heterogeneous settings. 
\end{abstract}

\section{Introduction}

The higher criticism (HC) test statistic has lately been studied under more
varied settings, for example in Arias-Castro, Cand\'{e}s and Plan (2011), 
Arias-Castro and Wang (2015), Cai and Wu (2014), Chan and Walther (2015), Donoho and Jin (2015),
Donho and Kipnis (2021, 2022), Jin, Ke and Wang (2016), Kipnis (2022),
Li and Siegmund (2015), Moscovich, Nadler and Spiegelman (2016)
and Mukherjee, Pillai and Lin (2015),
extending beyond the classical sparse Gaussian mixture considered in Donoho and Jin (2004) and Ingster (1997).
However asymptotics and optimality of the HC test statistic obtained have largely been confined to 
Gaussian phase transition, even when the underlying response distribution is non-Gaussian.
Notable exceptions are Donoho and Kipnis (2021, 2022),
where non-Gaussian phase transitions of the HC test statistic were obtained.
However it was unclear whether these phase transitions are optimal over all test statistics. 

In this paper we extend the optimality of the HC test statistic first to non-Gaussian responses,
with phase transitions that are unique to the response distributions.
This is followed by an extension to heterogeneous settings where there are differences in the detection powers of the local hypotheses.
For example in the comparison of two frequency tables, 
cells with larger total counts have larger detection powers for a fixed probability difference between the two tables.
In sparse Poisson means models, 
hypotheses with larger Poisson means have more detection powers, 
when the mean ratio between the null and alternative hypotheses is fixed.
 
The HC test statistic is however not optimal when applied directly to such heterogeneous settings.
This is remedied by thresholding the HC test statistic,
that is by restricting the p-values considered to say cells with total counts exceeding a threshold.
To be specific we derived an optimal phase transition curve that involves the response and sample size distributions, 
and showed that the threshold HC test statistic achieves this curve.
We also applied thresholding to the Bonferroni test statistic and showed, 
both theoretically and in a simulation study, 
that we are able to detect sparse mixtures better with thresholding. 

\subsection{Layout}

In Section 2 we provide a short recap of the HC test statistic and its Gaussian phase transition.
In Section 3 we extend to non-Gaussian phase transitions in homogeneous settings.
In Section 4 we extend further to phase transitions in heterogeneous settings,
and show that the threshold HC test statistic is optimal.
In Section 5 we introduce the rank-adjustment test statistic 
and provide the phase transitions of the Bonferroni and rank-adjustment test statistics.
In Section 6 we describe the numerical performances of all the test statistics.
The proofs of all results are given in Chan (2023).

\subsection{Notations}

We write $X \sim F$ to denote random variable $X$ following a distribution $F$.
We write {\it i.i.d.} to denote independent and identically distributed.
We write $F^k$ to denote the $k$-fold convolution of a distribution $F$.
That is $F^k$ is the distribution of $Y=X_1+\cdots+X_k$ when $X_i \sim_{\rm i.i.d.} F$.
We write $P_{\theta}$ and $E_{\theta}$ to denote probability and expectation with respect to a
distribution $F_{\theta}$.
We write $P_0$ ($E_0$) and $P_1$ ($E_1$) to denote probability (expectation) with respect to the global null and alternative
respectively.
For sequences $(a_n)$ and $(b_n)$ we write $a_n \sim b_n$ to denote $\tfrac{a_n}{b_n} \rightarrow 1$,
and $a_n  =o(b_n)$ to denote $\tfrac{a_n}{b_n} \rightarrow 0$.
We write $\# A$ to denote the number of elements in a set $A$.
For a given function $\psi$ we write $\psi'$ to denote the first derivative of $\psi$. 

\section{Gaussian phase transition of the HC test statistic}

Consider random variables $X_{ij}$, $1 \leq i \leq n$, $1 \leq j \leq k_n$,
following an exponential family of distributions $\{ F_{\theta} \}_{\theta \in \Theta}$,
satisfying
\begin{equation} \label{expo}
dF_{\theta}(x) = e^{\theta x-\psi(\theta)} dF_0(x),
\end{equation}
where $e^{\psi(\theta)} = E_0 e^{\theta X}$ and $\Theta = \{ \theta: \psi(\theta) < \infty \}$.

Consider a global null $H_0$ under which $X_{ij} \sim_{\rm i.i.d.} F_{\theta_0}$.
Since $\theta_0$ is known we may assume, by a reparametrization if necessary,
that $\theta_0=0$.
Let $Y_{in} = X_{i1} + \cdots + X_{ik_n}$ and consider the global alternative $H_1$
under which,
for some unknown $\theta \neq 0$ and $\epsilon_n > 0$,
\begin{equation} \label{mixture}
Y_{in} \sim_{\rm i.i.d.} (1-\epsilon_n) F_0^{k_n} + \epsilon_n F_{\theta}^{k_n}.
\end{equation}

Let $p_i$ be the p-value of the $i$th hypotheses $H_{0i}$: $Y_{in} \sim F_0^{k_n}$ 
and $H_{1i}$: $Y_{in} \sim F_{\theta}^{k_n}$, and let
$p_{(1)} \leq \cdots \leq p_{(n)}$ be the sorted p-values.
We show in Theorem \ref{thm1} that Tukey's (1976) HC test statistic,
\begin{equation} \label{HC}
{\rm HC}_n = \max_{1 \leq i \leq \frac{n}{2}} \tfrac{i - np_{(i)}}{\sqrt{np_{(i)} (1-p_{(i)})}},
\end{equation}
is optimal in the detection of the global null versus alternative,
in the sparse mixture setting when $\epsilon_n = n^{-\beta}$ under $H_1$,
and $\tfrac{k_n}{\log n} \rightarrow a$ for some $a > 0$.

When the underlying exponential family is Gaussian this is the celebrated result
in Donoho and Jin (2004) that sparked interest and subsequent developments in the study of sparse mixture detection
using the HC test statistic.
Theorem \ref{thm1} extends optimality of the HC test statistic to general exponential families.

Let the risk of a test statistic $T$ be given by
\begin{equation} \label{risk}
\mbox{Risk}(T) = \inf_c [P_0(T \geq c) + P_1(T < c)].
\end{equation}
If $T$ is stochastically smaller under $H_1$ compared to $H_0$,
the inequalities are reversed in (\ref{risk}).
Following Arias-Castro, Cand\'{e}s and Plan (2011) and Donoho and Kipnis (2021),
we say that $T_n$ is asymptotically powerful if Risk($T_n) \rightarrow 0$ and asymptotically powerless if 
Risk($T_n) \rightarrow 1$.

To motivate the non-Gaussian phase transition curves in the statement of Theorem \ref{thm1},
we first look at the phase transition curve under the Gaussian setting.
Consider $F_{\theta} = {\rm N}(\theta,1)$.
If $X_{ij} \sim_{\rm i.i.d.} F_{\theta}$ for $1 \leq j \leq k_n$
then $k_n^{-\frac{1}{2}} Y_{in} \sim {\rm N}(\theta \sqrt{k_n},1)$.
Consider $\tfrac{k_n}{\log n} \rightarrow a$ for some $a>0$.
By Donoho and Jin (2004) and Ingster (1997), 
the Gaussian phase transition curve
\begin{equation} \label{rho}
\rho(\beta) = \left\{ \begin{array}{ll} \beta-\tfrac{1}{2} & \mbox{ if } \tfrac{1}{2} < \beta \leq \tfrac{3}{4}, \cr
(1-\sqrt{1-\beta})^2 & \mbox{ if } \frac{3}{4} < \beta < 1, \end{array} \right.
\end{equation}
is such that in the detection of the sparse component of  
$(1-\epsilon_n) {\rm N}(0,1)+ \epsilon_n {\rm N}(\theta \sqrt{k_n},1)$,
all test statistics are asymptotically powerless when $|\theta| \sqrt{a} < \sqrt{2 \rho(\beta)}$,
whereas the HC test statistic is asymptotically powerful when $|\theta| \sqrt{a} > \sqrt{2 \rho(\beta)}$.

Whereas the chi-squared test is known to be a powerful test under the dense regime 
$\beta < \tfrac{1}{2}$, it is a weak test when the sparse regime $\tfrac{1}{2} < \beta < 1$,
so the HC test statistic complements the chi-squared test statistic by achieving
the optimal phase transition curve (\ref{rho}) when $\tfrac{1}{2} < \beta < 1$.

For non-Gaussian $F_0$ it is neater to express the phase transition curve as a function of $\theta$.
That is we want a phase transition curve $b(\theta)$ such that the HC test statistic is asymptotically powerful when
$\beta < b(\theta)$,
whereas all test statistics are asymptotically powerless when $b(\theta) < \beta$.

By (\ref{rho}) for $F_0$ standard normal, we can express this phase transition curve as
$$b(\theta) = \left\{ \begin{array}{ll} 
\tfrac{1}{2}(1+a \theta^2) & \mbox{ if } a \theta^2 \leq \tfrac{1}{2}, \cr
1-(1-\sqrt{\tfrac{a \theta^2}{2}})^2 & \mbox{ if } \tfrac{1}{2} < a \theta^2 \leq 2. \end{array} \right.
$$

\section{Non-Gaussian phase transitions of the HC test statistic}

Let $I(\nu) = \sup_{\theta \in \Theta} [\theta \nu-\psi(\theta)]$ be the Legendre-Fenchel transform of $\psi(\theta)$,
that is the rate function of $F_0$.
By Cram\'{e}r's Theorem for $\nu > \mu(0)$,
$$P_0 (Y_{in} \geq k_n \nu) = e^{-k_n [I(\nu)+o(1)]} \mbox{ as } k_n \rightarrow \infty.
$$
More generally for $\nu > \mu(\theta)$,
\begin{equation} \label{cram2}
P_{\theta} (Y_{in} \geq k_n \nu) = e^{-k_n[I(\nu)-\theta \nu+\psi(\theta)+o(1)]} \mbox{ as } k_n \rightarrow \infty,
\end{equation}
and (\ref{cram2}) holds with the inequality reversed when $\nu < \mu(\theta)$.

Let $\theta_a^+ > 0$ and $\theta_a^- <0$ be such that $\mu_a^+ = \psi'(\theta_a^+)$ and $\mu_a^-=\psi'(\theta_a^-)$ satisfy
\begin{equation} \label{Imu} 
I(\mu_a^+) = I(\mu_a^-) = a^{-1}. 
\end{equation}
It is possible when $F_0$ is discrete and bounded, 
for example when $F_0$ is Bernoulli or Poisson,
that there is no solution $\mu_a^+$ or $\mu_a^-$ to (\ref{Imu}).

To take care of these special cases we can define more generally
$$\mu_a^+ = \sup \{ \mu: I(\mu) \leq a^{-1} \}, \qquad \mu_a^- = \inf \{ \mu: I(\mu) \leq a^{-1} \},
$$
with $\theta_a^+$ (possibly $\infty$) and $\theta_a^-$ (possibly $-\infty$) satisfying
$\mu_a^+ = \lim_{\theta \rightarrow \theta_a^+} \psi'(\theta)$ 
and $\mu_a^- = \lim_{\theta \rightarrow \theta_a^-} \psi'(\theta)$.

The phase transition curve of $F_0$ is 
\begin{equation} \label{btheta}
b(\theta) = \left\{ \begin{array}{ll} \tfrac{1}{2} \{1+a[\psi(2 \theta)-2 \psi(\theta)] \} & 
\mbox{ if } \tfrac{1}{2} \theta_a^- \leq \theta \leq \tfrac{1}{2} \theta_a^+, \cr 
a[\theta \mu_a^+ - \psi(\theta)] & \mbox{ if } \tfrac{1}{2} \theta_a^+ < \theta \leq \theta_a^+, \cr 
a[\theta \mu_a^- - \psi(\theta)] & \mbox{ if } \theta_a^- \leq \theta < \tfrac{1}{2} \theta_a^-. 
\end{array} \right.
\end{equation}

Let $p_i$ be the two-sided p-value of $Y_{in}=X_{i1}+ \ldots +X_{i k_n}$.
That is $p_i$ is two times the upper or lower tail probability of $Y_{in} \sim F_0^{k_n}$,  
whichever is smaller. 
For $F_0$ discrete we randomize the p-values so that $p_i \sim {\rm Uniform}(0,1)$ under the $i$th null $H_{0i}$. 
The randomization is for theoretical convenience. 
In practice we may want to apply the HC test statistic without p-value randomization.
Similar p-value randomizations were applied in Donoho and Kipnis (2022).

\begin{thm} \label{thm1} 
Consider the sparse testing problem {\rm (\ref{mixture})} with $\epsilon_n=0$ under $H_0$
and $\epsilon_n = n^{-\beta}$ for some $0 < \beta < 1$ under $H_1$.
Consider $\tfrac{k_n}{\log n} \rightarrow a$ for some $a >0$ as $n \rightarrow \infty$. 
If $\beta>b(\theta)$ then all test statistics are asymptotically powerless. 
If $\beta < b(\theta)$ then the HC test statistic is asymptotically powerful. 
\end{thm}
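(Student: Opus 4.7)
The plan is to adapt the classical Ingster--Donoho--Jin sparse-mixture scheme to the present exponential family setting, with the quadratic rate of the Gaussian replaced by the convex conjugate $I$ supplied by Cram\'er's theorem in (\ref{cram2}).

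For the powerlessness half ($\beta>b(\theta)$), I would bound the total variation distance between the null and alternative product measures via the chi-squared divergence $\chi^2(P_1^n,P_0^n)=(1+\epsilon_n^2 E_0[(L_n-1)^2])^n-1$, where by exponential tilting the per-observation likelihood ratio is $L_n(Y_{in})=e^{\theta Y_{in}-k_n\psi(\theta)}$. Inside the central regime $\tfrac12\theta_a^-\le\theta\le\tfrac12\theta_a^+$, the untruncated second moment $E_0L_n^2=e^{k_n[\psi(2\theta)-2\psi(\theta)]}$ is finite, and $n\epsilon_n^2 E_0L_n^2\to 0$ precisely when $\beta>\tfrac12\{1+a[\psi(2\theta)-2\psi(\theta)]\}$. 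For $\theta>\tfrac12\theta_a^+$ (symmetric for $\theta<\tfrac12\theta_a^-$) I would truncate $L_n$ at the level $K=e^{k_n(\theta\mu_a^+-\psi(\theta))}=n^{b(\theta)}$ corresponding to $Y_{in}=k_n\mu_a^+$. Using a tight Cram\'er estimate for the tail of $L_n^2\,dP_0$, the truncated second moment is of order $K\cdot e^{-k_n[I(\mu_a^+)-\theta\mu_a^++\psi(\theta)]}=n^{2b(\theta)-1}$, while the truncation error is $n\epsilon_n P_\theta(Y_{in}>k_n\mu_a^+)\approx n^{b(\theta)-\beta}$; both vanish exactly when $\beta>a[\theta\mu_a^+-\psi(\theta)]$.

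For the powerfulness half ($\beta<b(\theta)$), I would single out one threshold in the HC supremum (\ref{HC}). Taking $\theta>0$ (the other case is symmetric) and writing $t_n=n^{-q}$ with $q=aI(\nu)$ for some $\nu\in[\psi'(\theta),\mu_a^+]$, the event $p_i\le t_n$ reduces, after the two-sided-p-value randomization and up to polynomial factors in $k_n$, to $Y_{in}\ge k_n\nu$. Under $H_0$ the count of such p-values concentrates at $nt_n$, so the relevant piece of (\ref{HC}) has denominator of order $\sqrt{nt_n}=n^{(1-q)/2}$; under $H_1$ the alternative contributes an extra $n\epsilon_n P_\theta(Y_{in}\ge k_n\nu)\approx n^{1-\beta-a[I(\nu)-\theta\nu+\psi(\theta)]}$ small p-values. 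The HC ratio at that rank then diverges provided
\begin{equation*}
\beta<\tfrac12+a\bigl[\theta\nu-\tfrac12 I(\nu)-\psi(\theta)\bigr].
\end{equation*}
Optimizing in $\nu$ gives $I'(\nu)=2\theta$, i.e.\ $\nu=\psi'(2\theta)$, which lies in the feasible range precisely when $2\theta\le\theta_a^+$ and recovers the central piece of $b(\theta)$; for $\theta>\tfrac12\theta_a^+$ the optimum is attained at the boundary $\nu=\mu_a^+$ and recovers the outer piece $a[\theta\mu_a^+-\psi(\theta)]$. Combined with the Donoho--Jin-type bound ${\rm HC}_n=O_P(\sqrt{\log\log n})$ under $H_0$, this yields Risk$\to 0$.

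The main obstacle will be the sharp handling of truncation at $\nu=\mu_a^+$ on the powerlessness side, where the null tail is already of order $n^{-1}$ and a naive use of (\ref{cram2}) can be off by polynomial-in-$k_n$ factors that are material at the boundary. A Bahadur--Rao-type refinement of Cram\'er, or an explicit monotonicity argument in the truncation level, appears to be needed to show that contributions from $Y_{in}\approx k_n\mu_a^+$ are controlled under both $P_0$ and $P_\theta$. A parallel but more routine care is needed in the powerfulness proof to track the integer rank in (\ref{HC}), the two-sided p-value convention, and the uniform Bernstein concentration of the binomial counts $\#\{i:p_i\le t_n\}$. Once these technical points are addressed, the argument is a mechanical translation of the Donoho--Jin scheme through the Cram\'er large-deviation estimates.
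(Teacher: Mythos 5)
Your proposal is correct and its overall architecture matches the paper's: a truncated second-moment argument for the impossibility half, and a single-threshold lower bound on the HC statistic at $\nu=\mu(2\theta)$ (respectively $\nu=\mu_a^+$ when $\theta>\tfrac12\theta_a^+$) for the achievability half, with the null behaviour of ${\rm HC}_n$ controlled by an LIL-type bound. The achievability half is essentially identical to the paper's proof, including the choice of threshold, the two regimes of $\theta$, and the requirement that $n^{1-\beta}P_\theta(Y_{1n}\ge k_n\nu)\to\infty$ for the binomial concentration to apply.

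The impossibility half takes a genuinely different, though closely related, route. You bound the truncated $\chi^2$-divergence of the product measures under $P_0$, so that only the second moment $E_0L^2=e^{k_n[\psi(2\theta)-2\psi(\theta)]}$ of the tilted likelihood ratio enters, plus the truncation loss $n\epsilon_nP_\theta(Y_{1n}>k_n\mu_a^+)$. The paper instead truncates $\log L_n$ and applies Chebyshev under $P_1$: it shows $\limsup E_1\ell_n\le 0$ and ${\rm Var}_1\ell_n\to 0$, which forces $P_1(L_n>c)\to 0$ for every $c>1$ and hence risk $\to 1$. The price of the paper's route is that its variance bound needs a third moment, $E_\theta(L-1)^2\le E_0L^3=e^{k_n[\psi(3\theta)-3\psi(\theta)]}$, together with a separate convexity lemma verifying $1-3\beta+a[\psi(3\theta)-3\psi(\theta)]<0$ in the inner regime; your $\chi^2$ route avoids this entirely, at the cost of the standard bookkeeping relating the truncated $\chi^2$ of the mixture to total variation. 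Both routes hinge on the same key inequality---that $\beta>b(\theta)$ implies $\beta>a[\theta\mu_a^+-\psi(\theta)]$ (the paper's comparison of the two branches of $b(\theta)$, its Lemma on $g$ versus $h$)---to kill the truncation error, so you should make that monotonicity comparison explicit rather than assume it. Finally, your worry about Bahadur--Rao refinements at $\nu=\mu_a^+$ is unnecessary: since $k_n\sim a\log n$, any polynomial-in-$k_n$ prefactor is $n^{o(1)}$ and is absorbed by the strict gap $\delta(\theta)=|b(\theta)-\beta|>0$; the paper gets by with the crude Cram\'{e}r exponent plus a slight enlargement of the truncation level to $\mu_a^*=\mu_a^++\delta(\theta)/(2a\theta)$ so that the null contribution above the cut is $n^{1-aI(\mu_a^*)}\to 0$.
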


{\sc Example 1}. Consider the sparse mixture detection of 
\begin{equation} \label{YPoisson}
Y_{in} \sim_{i.i.d.} (1-\epsilon_n) {\rm Poisson}(k_n) + \epsilon_n {\rm Poisson}(e^{\theta} k_n), \quad 1 \leq i \leq n,
\end{equation}
for some $\theta \neq 0$,
with $\tfrac{k_n}{\log n} \rightarrow a$ for some $a>0$.

When $k_n$ is integer-valued, 
this falls under the setting of Theorem \ref{thm1} with $F_0$ the Poisson
distribution with mean 1. 
The restriction of $k_n$ to integer values is only due to the setting of the problem
considered in Theorem \ref{thm1}.
The proof of Theorem \ref{thm1} easily extends to non integer-valued $k_n$ under the Poisson setting.

When $F_0$ is the Poisson(1) distribution, 
$\psi(\theta)=e^{\theta}-1$ and its rate function is $I(\mu) = \mu \log \mu-\mu+1$.
Hence by (\ref{btheta}),
the phase transition curve is 
$$b(\theta) = \left\{ \begin{array}{ll} \tfrac{1}{2}[1+a(e^{2 \theta}-2e^{\theta}+1)] & \mbox{ if } 
\tfrac{1}{2} \theta_a^- \leq \theta \leq \tfrac{1}{2} \theta_a^+, \cr
a[\theta \mu_a^+ - e^{\theta}+1] & \mbox{ if } \tfrac{1}{2} \theta_a^+ < \theta \leq \theta_a^+, \cr
a[\theta \mu_a^- - e^{\theta}+1] & \mbox{ if } \theta_a^- \leq \theta < \tfrac{1}{2} \theta_a^-, \cr
\end{array} \right.
$$
where $\theta_a^-<0$ and $\theta_a^+>0$ are the solutions of
\begin{equation} \label{etheta}
e^y(y-1)+1=a^{-1}. 
\end{equation}
Since $e^y(y-1)+1 \uparrow 1$ as $y \rightarrow -\infty$, 
when $a \leq 1$ there is no negative solution of (\ref{etheta}) and so $\theta_a^-=-\infty$ and $\mu_a^-=0$.

Theorem 1 thus fills the gap between the high count ($\tfrac{k_n}{\log n} \rightarrow \infty$), 
and low count ($\tfrac{k_n}{\log n} \rightarrow 0$) asymptotics in Arias-Castro and Wang (2011),
by showing that the HC test statistic is optimal under the intermediate setting $\tfrac{k_n}{\log n} \rightarrow a$
for some $a>0$.

The setting in Arias-Castro and Wang (2011) is more general than what is described above,
with the mean of $Y_{in}$ under $H_0$ possibly varying with $i$.
This brings us to the study, in the next section, of the phase transitions for heterogeneous settings,
and the thresholding of the HC test statistic to achieve optimality.

\section{Phase transitions for heterogeneous settings}

The phase transition curves for heterogeneous settings has an added level of complexity
as they involve the rate function of the sample size distribution as well as the rate function of the response distribution. 

Consider random variables $X_{ij}$, $1 \leq i \leq n$, $1 \leq j \leq K_{in}$
and let $Y_i = \sum_{j=1}^{K_{in}} X_{ij}$.
Let $K_{in}$, $1 \leq i \leq n$, be i.i.d. with rate function $J$ satisfying the following conditions.

\smallskip \noindent
(A1) There exists $a_0>0$ such that $J(a)=0$ for $0 \leq a \leq a_0$.

\smallskip \noindent
(A2) $J'(a)$ is continuous and strictly increasing on $[a_0,\infty)$,
with $J'(a_0)=0$ and $\lim_{a \rightarrow \infty} J'(a)=
\infty$.

\smallskip \noindent
(A3) $P(K_{in} = k) = \alpha_{kn} n^{-J(\frac{k}{\log n})}$, 
with $\sup_{k \geq \lambda_n} \tfrac{|\log \alpha_{kn}|}{k}
\rightarrow 0$ as $n \rightarrow \infty$,
with $\lambda_n = a_0 \log n$.

\medskip
{\sc Example} 2. (a) Consider $K_{in} \sim \max(1,{\rm Poisson}(\lambda_n))$.
Let $a=\tfrac{k}{\log n}$.
By Stirling's approximation, 
for $k \geq \lambda_n$,
$$P(K_{in} =k) = e^{-\lambda_n} \tfrac{\lambda_n^k}{k!} \sim \tfrac{1}{\sqrt{2 \pi k}} e^{-\lambda_n}
(\tfrac{\lambda_n e}{k})^k = \tfrac{1}{\sqrt{2 \pi k}} e^{-a_0 \log n} (\tfrac{a_0 e}{a})^{a \log n},
$$
and (A1)--(A3) holds with $J(a) = a \log(\tfrac{a}{a_0})-a+a_0$ for $a \geq a_0$.

\smallskip \noindent
(b) Consider
$$P(K_{in} = k) = \int_k^{k+1} \tfrac{1}{\sqrt{2 \pi \tau \lambda_n}} e^{-\tfrac{(z-\lambda_n)^2}{2 \tau \lambda_n}} dz,
\qquad k \geq \lambda_n,
$$
corresponding to an asymptotic ${\rm N}(\lambda_n,\tau \lambda_n)$ distribution.
We can check that (A1)--(A3) hold with $J(a) = \tfrac{(a-a_0)^2}{2a_0 \tau}$ for $a \geq a_0$.

\medskip  
We express the phase transition curve $b_J(\theta)$ for a heterogeneous setting in terms of
the following constrained optimization problem.
We consider only $\theta$ such that $a_0 I(\mu(\theta)) \leq 1$, 
since a simple Bonferroni test is asymptotically powerful for $0 < \beta < 1$ when $a_0 I(\mu(\theta)) > 1$.
Let
\begin{eqnarray} \label{gnu}
g(\nu,a) & = & a I(\nu) + J(a), \\ \label{fnu}
f_{\theta}(\nu,a) & = & a[\theta \nu-\psi(\theta)]+\tfrac{1}{2}[1-g(\nu,a)].
\end{eqnarray}
For $\theta$ such that $a_0 I(\mu(\theta)) \leq 1$,
define
\begin{equation} \label{bJ}
b_J(\theta) = \max_{(\nu,a): g(\nu,a) \leq 1} f_{\theta}(\nu,a).
\end{equation}

\medskip 
{\sc Remarks}.
Without the constraint $g(\nu,a) \leq 1$,
maximization of $f_{\theta}(\nu,a)$ occurs at $(\mu(2 \theta),a_{\theta})$,
with $a_{\theta}$ satisfying
\begin{equation} \label{J1}
J'(a_{\theta}) = \psi(2 \theta)-2 \psi(\theta).
\end{equation}
Hence for $\theta$ such that $g(\mu(2 \theta),a_{\theta}) \leq 1$,
$$b_J(\theta) = f_{\theta}(\mu(2 \theta),a_{\theta}) = \tfrac{1}{2}[1+a_{\theta} J'(a_{\theta})-J(a_{\theta})].
$$

For $\theta$ such that $g(\mu(2 \theta),a_{\theta})>1$,
maximization of $f_{\theta}(\nu,a)$ in (\ref{bJ}) occurs at $g(\nu,a)=1$.
In particular by the method of Lagrange multipliers,
$$b_J(\theta) = f_{\theta}(\mu(\theta^*),a_{\theta}^*),
$$
with $(\theta^*,a_{\theta}^*)$ characterized by
\begin{eqnarray} \label{J2a}
J'(a_{\theta}^*) & = & \psi(\theta^*)-\tfrac{\theta^*}{\theta} \psi(\theta), \\ \label{J3}
g(\mu(\theta^*),a_{\theta}^*) & = & 1, 
\end{eqnarray}
and $1 \leq \tfrac{\theta^*}{\theta} < 2$. 

\medskip
{\sc Example} 3. (a) Consider $J(a) =a \log (\tfrac{a}{a_0})-a+a_0$ for $a \geq a_0$,
the rate function of ${\rm Poisson}(\lambda_n)$.
Since $J'(a) = \log (\tfrac{a}{a_0})$ for $a \geq a_0$,
by (\ref{J1}), 
$$a_{\theta} = a_0 e^{\psi(2 \theta)-2 \psi(\theta)}.
$$
Hence for $\theta$ such that $g(\mu(2 \theta)),a_{\theta}) \leq 1$, 
$$b_J(\theta) = \tfrac{1}{2} \{ 1+a_0 [e^{\psi(2 \theta)-2 \psi(\theta)}-1] \}.
$$

\smallskip
\noindent (b) Consider $J(a) = \tfrac{(a-a_0)^2}{2a_0 \tau}$ for $a \geq a_0$, 
the rate function of ${\rm N}(\lambda_n, \tau \lambda_n)$.
Since $J'(a)=\tfrac{a-a_0}{a_0 \tau}$ for $a \geq a_0$, 
by (\ref{J1}), 
$$a_{\theta} = a_0 \{ 1+\tau[\psi(2 \theta)-2 \psi(\theta)] \}. 
$$
Hence for $\theta$ such that $g(\mu(2 \theta)),a_{\theta}) \leq 1$,
$$b_J(\theta) = \tfrac{1}{2} \{ 1 + a_0[\psi(2 \theta)-2 \psi(\theta) + \tfrac{\tau [\psi(2 \theta)-2 \psi(\theta)]^2}{2} ]\}.
$$

\subsection{Optimality of the HC test statistic via thresholding}

The HC test statistic does not attain the optimal phase transition curve $b_J(\theta)$.
To achieve this detection boundary we need to threshold the HC test statistic.
For a given $k \geq 1$, 
let ${\rm HC}_{kn}$ be the HC test statistic
computed on the p-values $p_i$ for $K_{in} \geq k$.

That is let $A_k = \{ i: K_{in} \geq k \}$, $n_k = \# A_k$ and let $p_{(i)k}$
the the $i$th smallest p-value among $\{ p_i: i \in A_k \}$.
Define
$${\rm HC}_{kn} = \max_{1 \leq i \leq \frac{n_k}{2}} \tfrac{i-n_k p_{(i)k}}{\sqrt{n_k p_{(i)k} (1-p_{(i)k})}}.
$$
The threshold HC test statistic is defined to be
\begin{equation} \label{HCthres}
{\rm HC}_n^{\rm thres} = \max_{k \geq 1} {\rm HC}_{kn}.
\end{equation}

We show in Theorem \ref{thm2} that ${\rm HC}_n^{\rm thres}$ is optimal in detecting a sparse mixture for 
heterogeneous settings.
However the computation of ${\rm HC}_n^{\rm thres}$ is expensive compared to that of ${\rm HC}_n$.
In practice to reduce computation cost we may want to maximize HC$_{kn}$ in (\ref{HCthres}) over a few representative
values of~$k$.

For example it can be shown that when the underlying exponential family is Gaussian,
the maximization in (\ref{bJ}) is achieved when $J(a) \leq \tfrac{1}{2}$.
This motivates the maximization in (\ref{HCthres}) to $k$ satisfying $n_k \geq n^{\frac{1}{2}}$.

\begin{thm} \label{thm2}
Consider the test of $H_0$: $\epsilon_n=0$ versus $H_1$: $\epsilon=n^{-\beta}$ for some $0 < \beta < 1$, 
with sample sizes $K_{in}$ i.i.d. with rate function $J$ satisfying {\rm (A1)--(A3)}.
If $\beta > b_J(\theta)$ then all test statistics are asymptotically powerless. 
If $\beta < b_J(\theta)$ then ${\rm HC}^{\rm thres}_n$ is asymptotically powerful.
\end{thm}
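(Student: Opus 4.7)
\textbf{Proof plan for Theorem \ref{thm2}.} My strategy is to let the constrained optimization (\ref{bJ}) drive both directions: for the powerful direction I select a threshold $k^*$ matched to the maximizing sample-size level $a^*$ and analyze the single component ${\rm HC}_{k^* n}$ at the order statistic dictated by $\nu^*$; for the powerless direction I evaluate a truncated chi-squared with truncation matched to the feasibility region $R = \{(\nu, a): g(\nu, a) \leq 1\}$.

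\emph{Powerful direction.} Fix $(\nu^*, a^*)$ attaining $b_J(\theta) = f_\theta(\nu^*, a^*)$ and set $k^* = \lceil a^* \log n \rceil$. By (A3) and a law of large numbers, $n_{k^*} = \# A_{k^*} \sim n^{1 - J(a^*)}$ in probability. Consider the p-value cutoff $u^* = 2 n^{-a^* I(\nu^*)}$ and the corresponding order statistic index $i^* = \lceil n_{k^*} u^* \rceil$. Under $H_0$, the count $N^* = \#\{i \in A_{k^*}: p_i \leq u^*\}$ is ${\rm Binomial}(n_{k^*}, u^*)$, so its null standard deviation is of order $n^{[1 - g(\nu^*, a^*)]/2}$. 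Under $H_1$, the tilted Cram\'er bound (\ref{cram2}) supplies an extra signal contribution of order $\epsilon_n n_{k^*} n^{-a^*[I(\nu^*) - \theta\nu^* + \psi(\theta)]}$ to $E_1[N^*]$. Dividing this signal excess by the null SD yields the standardized deviation
$$n^{-\beta + \frac{1}{2}[1 - g(\nu^*, a^*)] + a^*[\theta\nu^* - \psi(\theta)]} = n^{b_J(\theta) - \beta} \to \infty,$$
so ${\rm HC}_{k^* n}$ evaluated at index $i^*$, and hence ${\rm HC}_n^{\rm thres}$, diverges in probability under $H_1$. A uniform Shorack--Wellner-type tail bound on the uniform empirical process on each $A_k$, combined with a union bound over $k \leq n$, keeps ${\rm HC}_n^{\rm thres}$ at order $O_p(\sqrt{\log n})$ under $H_0$, which is dominated by the polynomial growth $n^{b_J(\theta) - \beta}$.

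\emph{Powerless direction.} The chi-squared divergence between $H_0$ and $H_1$ factors as $E_0[L_n^2] = (1 + \epsilon_n^2 E_0[(L_i - 1)^2])^n$, where $L_i = \exp(\theta Y_i - K_{in} \psi(\theta))$ is the per-observation likelihood ratio. Using (A3) and (\ref{cram2}), the joint density of $(Y_i/K_{in}, K_{in}/\log n)$ under $H_0$ is of order $n^{-g(\nu, a)}$, so a Laplace evaluation of the truncated second moment over $R$ yields
$$E_0[\tilde L_i^2] \asymp n^{\max_R [-J(a) + a(2\theta\nu - 2\psi(\theta) - I(\nu))]} = n^{\max_R [2 f_\theta(\nu, a) - 1]} = n^{2 b_J(\theta) - 1}.$$
Thus $n\epsilon_n^2 E_0[\tilde L_i^2] \sim n^{2[b_J(\theta) - \beta]} \to 0$, giving $E_0[\tilde L_n^2] \to 1$. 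Coupled with a separate argument that the total variation between $P_1$ and the truncated alternative vanishes, this forces $\mbox{Risk} \to 1$ by the standard Le Cam inequality.

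\emph{Main obstacle.} The key delicate point is unifying the two branches of $b_J(\theta)$ in (\ref{bJ}): when the unconstrained maximum $(\mu(2\theta), a_\theta)$ lies in $R$ the truncation is inactive and the direct second moment calculation suffices, but when it lies outside, the Lagrange characterization (\ref{J2a})--(\ref{J3}) pins the optimum to the boundary $g = 1$ and an appropriate truncation of the likelihood ratio $L_i$ (typically capping at a matched level rather than naively restricting $(\nu, a) \in R$) is required so that the discarded contribution under $H_1$ can be absorbed by a coupling. The remaining work --- uniform empirical-process control of ${\rm HC}_{kn}$ over $k \geq 1$, concentration of $n_k$ around its mean, and the p-value randomization for discrete $F_0$ --- is routine bookkeeping of the kind already handled in Theorem \ref{thm1}.
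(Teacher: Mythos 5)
Your proof of the powerful direction is essentially the paper's: fix a maximizer $(\nu^*,a^*)$ of the constrained problem, threshold at $k^*=\lceil a^*\log n\rceil$, count exceedances of the level $k^*\nu^*$ among $\{i:K_{in}\ge k^*\}$, and standardize; the exponent $f_\theta(\nu^*,a^*)-\beta$ you obtain is exactly the paper's $\delta(\theta)$, and the side condition that the expected signal count $\epsilon_n n_{k^*}q_n$ itself diverges is verified there as well. Where you genuinely diverge is the powerless direction. You run the classical truncated second-moment method under $P_0$ (show $E_0[\tilde L_n^2]\to 1$ and invoke Le Cam together with a total-variation comparison between $P_1$ and the truncated alternative). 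The paper instead truncates the log-likelihood ratio on the event $Y_i\le K_{in}\mu_{in}^*$, where $\mu_a^*=\mu_a^+ +\delta(\theta)/(2a\theta)$ sits a small slack above the boundary of the feasible region $g(\nu,a)\le 1$, computes the mean and variance of the truncated $\log L_n$ under $P_1$ (not $P_0$), and concludes via the elementary inequality $2-\mathrm{Risk}(L_n)\le c+P_1(L_n>c)$ for every $c>1$ that the risk tends to $1$ once $P_1(L_n>c)\to 0$. The underlying tilted Cram\'{e}r computations are the same in both routes --- your $E_0[\tilde L_i^2]$ is the paper's $E_\theta[e^{\theta Y_1-K_{1n}\psi(\theta)}\mathbf{1}_{\{Y_1\le K_{1n}\mu^*\}}]$ --- so they buy the same exponent $2b_J(\theta)-1$; the paper's $P_1$-moment framing avoids renormalizing the truncated likelihood ratio, while yours stays closer to the textbook Ingster/Cai--Wu template, and the obstacle you flag (the boundary branch of the Lagrange characterization forcing a matched truncation level) is precisely what the choice of $\mu_a^*$ resolves. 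Two items you dismiss as bookkeeping are where the real technical content lives: the null control of $\max_k\mathrm{HC}_{kn}$ is obtained not by a union bound over $k$ but from Cs\'{a}ki's law of the iterated logarithm applied to the nested empirical processes, giving an $o(\log n)$ almost-sure bound that suffices against the polynomial signal; and the Laplace evaluation over sample sizes requires a summability lemma for $\sum_{k}e^{J(k/\log n)}P(K_{1n}=k)$ and for $nP(K_{1n}>a_1\log n)$ with $J(a_1)=1$, which is exactly where the hypothesis on $\alpha_{kn}$ in (A3) enters.
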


\subsection{Phase transition of the HC test statistic}

For completeness we characterize the phase transition curve $b_H(\theta)$ for ${\rm HC}_n$,
with no thresholding of sample sizes.
As in the case of the threshold HC test statistic,
the phase transition curve is expressed as a constrained optimization problem.
In Donoho and Kipnis (2021),
an impossibility region in which HC$_n$ is asymptotically powerless was obtained using constrained optimization.
The setting there is more general than what is considered in Theorem \ref{thm3}.

Let
\begin{eqnarray} \label{gH}
g_H(\nu,a) & = & a I(\nu) + 2 J(a), \\ \label{fH}
f_{H \theta}(\nu,a) & = & a[\theta \nu-\psi(\theta)]+\tfrac{1}{2}[1-g_H(\nu,a)].
\end{eqnarray}
For $\theta$ such that $a_0 I(\mu(\theta)) \leq 1$,
define
\begin{equation} \label{bH}
b_H(\theta) = \max_{(\nu,a): a I(\nu) \leq 1} f_{H \theta}(\nu,a).
\end{equation}

\medskip
{\sc Remarks}.
Without the constraint $aI(\nu) \leq 1$,
maximization of $f_{H \theta}(\nu,a)$ occurs at $(\mu(2 \theta),a_{H \theta})$, 
with $a_{H \theta}$ satisfying
\begin{equation} \label{J2}
J'(a_{H \theta}) = \tfrac{\psi(2 \theta)}{2}-\psi(\theta).
\end{equation}
Hence for $\theta$ such that $a_{H \theta} I(\mu(2 \theta)) \leq 1$,
$$b_H(\theta) = f_{H \theta}(\mu(2 \theta),a_{H \theta}) = \tfrac{1}{2} + a_{H \theta} J'(a_{H \theta}) - J(a_{H \theta}).
$$

For $\theta$ such that $a_{H \theta} I(\mu(2 \theta)) > 1$,
maximization of $f_{H \theta}(\nu,a)$ in (\ref{bH}) occurs at $a I(\nu)=1$.
In particular by the method of Lagrange multipliers,
$$b_H(\theta) = f_{H \theta}(\mu(\theta_H^*),a_{H \theta}^*),
$$
with $(\theta_H^*,a_{H \theta}^*)$ characterized by
\begin{eqnarray} \label{J1a}
J'(a_{H \theta}^*) & = & \tfrac{\theta}{\theta_H^*} \psi(\theta_H^*)-\psi(\theta), \\ \label{astar}
a_{H \theta}^* I(\mu(\theta_H^*)) & = & 1, 
\end{eqnarray}
and $1 \leq \tfrac{\theta_H^*}{\theta} < 2$.

\medskip
{\sc Example} 4. (a) 
Consider $J(a) = a \log(\tfrac{a}{a_0})-a+a_0$ for $a \geq a_0$,
the rate function of Poisson($\lambda_n$). 
Since $J'(a) = \log(\tfrac{a}{a_0})$ for $a \geq a_0$,
by (\ref{J2}),
\begin{equation} \label{aH2}
a_{H \theta} =a_0 \exp(\tfrac{\psi(2 \theta)}{2}-\psi(\theta)).
\end{equation}
Hence for $\theta$ such that $a_{H \theta} I(\mu(2 \theta)) \leq 1$,
$$b_H(\theta) = \tfrac{1}{2}+a_0[\exp(\tfrac{\psi(2 \theta)}{2}-\psi(\theta))-1].
$$

\medskip \noindent
(b) Consider $J(a) = \tfrac{(a-a_0)^2}{2a_0 \tau}$ for $a \geq a_0$,
the rate function of ${\rm N}(\lambda_n,\tau \lambda_n)$.
Since $J'(a)=\tfrac{a-a_0}{a_0 \tau}$ for $\tau \geq a_0$,
by (\ref{J2}),
$$a_{H \theta} = a_0 \{ 1+\tfrac{\tau}{2}[\psi(2 \theta)-2 \psi(\theta)] \}.
$$
Hence for $\theta$ such that $a_{H \theta} I(\mu(2 \theta)) \leq 1$,
$$b_H(\theta) = \tfrac{1}{2} \{ 1+a_0 [\psi(2 \theta)-2 \psi(\theta) + \tfrac{\tau[\psi(2 \theta)-2 \psi(\theta)]^2}{4}] \}.
$$

\begin{thm} \label{thm3}
Consider the sparse mixture problem in Theorem {\rm 2}.
The test statistic ${\rm HC}_n$ is asymptotically powerless when 
$\beta > b_H(\theta)$ and asymptotically powerful when $\beta < b_H(\theta)$. 
\end{thm}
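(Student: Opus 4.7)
The plan is to write $\text{HC}_n = \sup_t Z_n(t)$ where $Z_n(t) = \sqrt{n}[F_n(t) - t]/\sqrt{t(1-t)}$ and $F_n$ is the empirical distribution of the p-values $p_1,\ldots,p_n$, and to extract $b_H(\theta)$ as a signal-to-noise exponent. Under $H_0$, the uniform empirical process tail bound gives $\text{HC}_n/\sqrt{\log\log n}$ bounded in probability, so it suffices to compare $Z_n(t_n)$ under $H_0$ and $H_1$ on polynomial-in-$n$ scale at a suitable level $t_n = n^{-s}$. The signal mean decomposes as $E_1[F_n(t)-t] = \epsilon_n[P_\theta(p_i \leq t) - t]$, and conditioning on $K_{in}$ and invoking (A3) together with Cram\'{e}r's estimate (\ref{cram2}) shows that the stratum $K_{in} \approx a\log n$ contributes at order $n^{-\beta - J(a) - a[I(\nu) - \theta\nu + \psi(\theta)]}$ when the level is $t = n^{-aI(\nu)}$; dividing by the null standard deviation $\sqrt{t/n}$ gives $E_1 Z_n(t)$ of order $n^{f_{H\theta}(\nu, a) - \beta}$.

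For the powerful direction, assume $\beta < b_H(\theta)$ and pick $(\nu^*, a^*)$ attaining the maximum in (\ref{bH}); set $t_n = n^{-a^* I(\nu^*)}$. Lower bounding $\text{HC}_n \geq Z_n(t_n)$, the heuristic computation yields $E_1 Z_n(t_n) \geq n^{b_H(\theta) - \beta + o(1)} \to \infty$. Since $F_n(t_n)$ is a normalized sum of independent indicators, $\text{Var}_1 F_n(t_n) = O(t_n/n)$, and a Chebyshev or Poisson-approximation bound upgrades the mean estimate to an in-probability statement, giving $\text{HC}_n \to \infty$ and thus vanishing risk.

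For the powerless direction, assume $\beta > b_H(\theta)$ and fix $c_n = (1+\delta)\sqrt{2\log\log n}$ so that $P_0(\text{HC}_n \geq c_n) \to 0$. Discretize $s \in (0, 1]$ on a grid of spacing $1/\log n$, so that $\{t_j = n^{-s_j}\}$ spans $[n^{-1}, 1/2]$ in $O(\log n)$ points and $\sup_{t \in [n^{-1}, 1/2]} Z_n(t)$ differs from $\max_j Z_n(t_j)$ by at most $o(1)$. At each $t_j$, $\beta > b_H(\theta)$ forces $E_1 Z_n(t_j) \leq n^{-\eta}$ for some $\eta > 0$ independent of $j$, and a Bennett-type tail bound on the binomial $nF_n(t_j)$ yields $P_1(Z_n(t_j) \geq c_n) = o(1/\log n)$; a union bound then gives $P_1(\max_j Z_n(t_j) \geq c_n) \to 0$. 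The extreme tail $t < n^{-1}$ is handled separately by showing $n\epsilon_n P_\theta(p_i \leq n^{-1}) \to 0$ using (A3) and (\ref{cram2}), so that no signal produces a p-value below the null minimum with $P_1$-probability approaching one.

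The main obstacle is the uniform control required in the powerless direction, particularly the interplay between the discretized concentration argument on $[n^{-1}, 1/2]$ and the ruling-out for $t < n^{-1}$. The constraint $aI(\nu) \leq 1$ in (\ref{bH}) excludes the extreme regime a priori, so the argument must verify that $\beta > b_H(\theta)$ still suppresses signals with sample size large enough to push $p_i$ below $n^{-1}$. This reduces to checking, via the Lagrangian conditions (\ref{J1a})--(\ref{astar}), that the worst-case boundary optimum of $f_{H\theta}$ on $aI(\nu) \leq 1$ dominates the relevant signal exponent on $aI(\nu) > 1$, a monotonicity statement analogous to those used in the remarks following (\ref{bH}).
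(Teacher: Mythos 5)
Your powerful direction is essentially the paper's argument: evaluate the empirical process at the single level $t_n=n^{-a^*I(\nu^*)}$ determined by the optimizer of $f_{H\theta}$, compute the signal-to-noise exponent $n^{b_H(\theta)-\beta}$, and upgrade by Chebyshev (you should state explicitly the side condition that the expected number of signal exceedances, $\epsilon_n n_k^* q_n = n^{\delta(\theta)+\frac12[1-aI(\nu)]+o_p(1)}$, tends to infinity, which is where the constraint $aI(\nu)\le 1$ enters). The powerless direction, however, has a genuine structural gap. You fix one critical sequence $c_n=(1+\delta)\sqrt{2\log\log n}$ and show $P_0(\mathrm{HC}_n\ge c_n)\to 0$ and $P_1(\mathrm{HC}_n\ge c_n)\to 0$. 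That only proves the level-$c_n$ test is powerless; the paper's definition of asymptotic powerlessness is $\inf_c[P_0(\mathrm{HC}_n\ge c)+P_1(\mathrm{HC}_n<c)]\to 1$, an infimum over \emph{all} thresholds, and your argument does not exclude a smaller, finely calibrated threshold that separates the two hypotheses inside the bulk of the null distribution. The paper closes this by invoking Jaeschke's limit theorem $b_n\mathrm{HC}_n-c_n\Rightarrow E_v^2$ under $H_0$ and then showing, via a coupling, that the signal perturbation satisfies $\sup_{0<p\le 1/2} b_n C_n(p)/\sqrt{np}\stackrel{p}{\to}0$ with $C_n(p)$ the count of non-null p-values below $p$; this forces the \emph{same} limit law under $H_1$, which is what makes every threshold fail. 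Your estimates ($E_1 Z_n(t_j)\le n^{-\eta}$, polynomially small) are in fact strong enough to verify the paper's condition, so the gap is in the logical target, not the computations — but as written the proof does not establish the theorem.

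Two secondary problems in the same direction. First, the claim that a multiplicative grid with ratio $n^{1/\log n}=e$ changes $\sup_t Z_n(t)$ by only $o(1)$ is false; between consecutive grid points the normalization $\sqrt{t}$ moves by a factor $\sqrt{e}$, so monotonicity of $F_n$ only gives control up to a constant (the paper instead uses a coarse grid of $m$ points and absorbs the resulting $n^{1/(2m)}$ loss into $\delta(\theta)$ by choosing $1/m<\delta(\theta)$). Second, the per-point bound $P_1(Z_n(t_j)\ge c_n)=o(1/\log n)$ fails at the small-$t$ end of your grid: for $nt_j=O(\log\log n)$ the binomial tail is Poissonian and of order $\exp(-c\,c_n\log c_n)$, which is much larger than $1/\log n$, so the naive union bound over all $O(\log n)$ grid points does not close (it can be repaired by splitting the grid, but this amounts to reproving Jaeschke's upper bound). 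The cleaner route, and the paper's, is to never bound the null fluctuation pointwise at all: separate off the null part via Jaeschke and reduce everything to the monotone signal count $C_n(p)$, which is a sum of at most $O_p(n^{1-\beta})$ independent indicators and is controlled by Markov's inequality on each block $M_j$, with the block $(0,n^{-1}]$ handled by showing no signal p-value falls below $n^{-1}$ at all.
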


\medskip
{\sc Example} 5.
For low frequency table counts,
the sparse mixture problem considered in Donoho and Kipnis (2022) corresponds to 
$F_0={\rm Bernoulli}(\tfrac{1}{2})$, $F_{\theta}={\rm Bernoulli}(0)$ (that is $\theta=-\infty$) and 
$K_{1n} \sim \max({\rm Poisson}(\lambda_n),1)$, with $\lambda_n = a_0 \log n$.
Hence $J(a) = a \log(\tfrac{a}{a_0})-a+a_0$ for $a \geq a_0$ and $J'(a) = \log(\tfrac{a}{a_0})$.

Since $\psi(\theta) = -\log 2$ and $I(\nu) < \infty$ only for $0 \leq \nu \leq 1$,
with $I(0)=\log 2$,
it follows from (\ref{gH})--(\ref{bH}) that for $a_0 \leq \tfrac{1}{\log 2}$,
\begin{eqnarray} \label{bH2}
b_H(\theta) & = & \max_{a \leq \frac{1}{\log 2}} f_H(a), \\ \label{fH2}
\mbox{where } f_H(a) & = & a \log 2 + \tfrac{1}{2}[1-a \log 2-2 J(a)] \\ \nonumber
& = & \tfrac{1}{2}(1+a \log 2)-J(a).
\end{eqnarray}
Note that $\theta \nu=0$ for $\nu=0$ and $\theta \nu=-\infty$ for $\nu>0$,
hence the maximization of $f_H(\nu,a)$ in (\ref{bH}) occurs at $\nu=0$.
For this reason,
we omitted $\nu$ in (\ref{bH2}) and (\ref{fH2}).

By (\ref{aH2}),
$$a_{H \theta} = a_0 \exp(\tfrac{\psi(2 \theta)}{2}-\psi(\theta))=a_0 \exp(\tfrac{\log 2}{2}) = a_0 \sqrt{2},
$$
so for $a_0 \leq \tfrac{1}{\sqrt{2} \log 2}$ (corresponding to $a_{H \theta} I(0) \leq 1$),
\begin{equation} \label{2.5a}
b_H(\theta) = f_H(a_{H \theta}) = \tfrac{1}{2}+a_0(\sqrt{2}-1).
\end{equation}
For $\tfrac{1}{\sqrt{2} \log 2} < a_0 \leq \tfrac{1}{\log 2}$,
\begin{eqnarray} \label{2.5b}
b_H(\theta) & = & f_H(\tfrac{1}{\log 2}) = 1-J(\tfrac{1}{\log 2}) = 1+\tfrac{\log (a_0 e \log 2)}{\log 2}-a_0 \\ \nonumber
\Rightarrow 2^{b_H(\theta)} & = & (a_0 \log 2)(e^{-a_0 \log 2}) 2e.
\end{eqnarray}

From (\ref{2.5a}),
${\rm HC}_n$ is asymptotically powerful when $2a_0 > \rho_H(\beta)$ 
and asymptotically powerless when $2a < \rho_H(\beta)$, where
$$\rho_H(\beta) =  2(1+\sqrt{2})(\beta-\tfrac{1}{2}) \mbox{ if } \tfrac{1}{2} < \beta < \tfrac{1}{2} +
\tfrac{\sqrt{2}-1}{\sqrt{2} \log 2}.
$$
From (\ref{2.5b}),
the above statement holds for 
$$\rho_H(\beta) = -\tfrac{2 W_{-1}(-\frac{2^{\beta}}{2e})}{\log 2}  \mbox{ if }
\tfrac{1}{2} + \tfrac{\sqrt{2}-1}{\sqrt{2} \log 2} < \beta < 1,
$$
where $W_{-1}(x)$ is the negative solution $y$ of $x=ye^y$.
In short,
the boundary $b_H(\theta)$ of the HC test statistic is consistent with the phase transition curve $\rho_H(\beta)$ 
displayed in equation (13) of Donoho and Kipnis (2022).

\section{Bonferroni and rank-adjustment tests}

For the classical sparse Gaussian mixtures,
the Bonferroni test statistic is known to have a phase transition that is optimal for $\beta \geq \tfrac{3}{4}$
but not for $\beta < \tfrac{3}{4}$.
Likewise when we extend to non-Gaussian mixtures of the from (\ref{mixture}),
the Bonferroni test is optimal for $\theta > \tfrac{1}{2} \theta_a^+$ and $\theta < \tfrac{1}{2} \theta_a^-$,
see (\ref{btheta}).
However the Bonferroni test is not optimal,
even for large $|\theta|$,
when we extend to the heterogeneous settings described in Section 4.

Just as thresholding of the HC test statistic leads to optimality,
we show here that thresholding the Bonferroni test statistic leads to optimality for large $|\theta|$ in heterogeneous settings.

Consider p-values $p_i \sim_{i.i.d.}$ Uniform(0,1) for $1 \leq i \leq n$ and let $p_{(1)}$ be the smallest p-value.
It is known that
\begin{equation} \label{Exp}
n p_{(1)} \Rightarrow {\rm Exp}(1) \mbox{ as } n \rightarrow \infty,
\end{equation}
where Exp(1) is the exponential distribution with mean 1.

Let $p_{(1)k}$ be the smallest p-value among $\{ p_i: i \in A_k \}$,
where $A_k = \{ i: K_{in} \geq k \}$,
and let $n_k = \# A_k$.
In view of (\ref{Exp}),
a natural thresholding of the Bonferroni test statistic gives us the rank-adjustment test statistic
\begin{equation} \label{Rn}
R_n = \min_{k \geq 1} n_k p_{(1)k} = \min_{1 \leq i \leq n} r_i p_i,
\end{equation}
where $r_i = \# \{ j: K_{jn} \geq K_{in} \}$ is the rank of $Y_{in}$ in terms of its sample size $K_{in}$,
with larger sample size corresponding to a smaller rank.
Since
$$\sum_{i=1}^n \tfrac{1}{r_i} \leq \sum_{i=1}^n \tfrac{1}{i} \leq 1+\log n,
$$
it follows from a Bonferroni argument that for possibly dependent p-values $p_i \sim$ Uniform(0,1),
$1 \leq i \leq n$,
$$P(R_n \leq \tfrac{\alpha}{1+\log n}) \leq \sum_{i=1}^n P(p_i \leq \tfrac{\alpha}{r_i(1+\log n)})
\leq \alpha.
$$

There is a worst-case $1+\log n$ multiplicative cost when applying the rank-adjustment test statistic
compared to the usual Bonferroni test.
Since $\log n$ grows slowly with $n$,
so for large $n$,
if there is exactly one false null p-value,
the cost is relatively small if its rank is large but the gain can be substantial if its rank is small.

\subsection{Phase transitions of the Bonferroni and rank-adjustment tests}

The phase transition curve of the Bonferroni test,
which is based on the smallest p-value $p_{(1)}$,
shares part of the phase transition curve of ${\rm HC}_n$.
For $\theta$ such that $a_0 I(\mu(\theta)) \leq 1$,
let
$$b_B(\theta) = \max_{(\nu,a): a I(\nu)=1} f_{H \theta}(\nu,a),
$$
where $f_{H \theta}$ is defined in (\ref{gH}) and (\ref{fH}).
It has the representation
$$b_B(\theta) = f_{H \theta}(\mu(\theta^*_H),a_{H \theta}^*),
$$
where $(\theta^*_H,a_{H \theta}^*)$ is characterized by (\ref{J1a}), (\ref{astar}) and $\tfrac{\theta_H^*}{\theta} \geq 1$.

Likewise the phase transition curve of the rank-adjustment test shares part of the phase transition curve of ${\rm HC}_n^{\rm thres}$.
For $\theta$ such that $a_0 I(\mu(\theta)) \leq 1$, 
let
$$b_R(\theta) = \max_{(\nu,a): g(\nu,a)=1} f_{\theta}(\nu,a),
$$
where $f_{\theta}$ is defined in (\ref{gnu}) and (\ref{fnu}).
It has the representation
$$b_R(\theta) =  f_{\theta}(\mu(\theta^*),a_{\theta}^*),
$$
where $(\theta^*,a_{\theta}^*)$ is characterized by (\ref{J2a}),
(\ref{J3}) and $\tfrac{\theta^*}{\theta} \geq 1$.

\begin{thm} \label{thm4} 
Consider the sparse mixture problem in Theorem {\rm 2}. 
The Bonferroni test is asymptotically powerless when 
$\beta > b_B(\theta)$ and asymptotically powerful when $\beta<b_B(\theta)$. 
The rank-adjustment test is asymptotically powerless when $\beta > b_R(\theta)$ 
and asymptotically powerful when $\beta < b_R(\theta)$.  
\end{thm}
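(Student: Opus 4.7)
The plan is to view the Bonferroni and rank-adjustment tests as the extremal versions of the HC analyses carried out for Theorems \ref{thm3} and \ref{thm2}, in which the governing constraint is forced to its boundary. Concretely, writing a p-value in large-deviation coordinates as $p_i = n^{-aI(\nu)}$ when $K_{in} = a \log n$ and $Y_{in}/K_{in} = \nu$, the Bonferroni rejection $p_{(1)} \leq \alpha/n$ corresponds to the active constraint $aI(\nu) = 1$, while the rank-adjustment rejection $r_i p_i \leq \mathrm{const}$, combined with the estimate $r_i \approx n^{1 - J(K_{in}/\log n)}$, corresponds to $g(\nu,a) = 1$. This identification pins down exactly the feasible sets defining $b_B(\theta)$ and $b_R(\theta)$, so both phase transitions can be treated in parallel.

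For each powerless half I would use a first-moment bound on the number of signals crossing the respective threshold. Independence of $(K_{in}, Y_{in})$ across $i$, together with the joint large-deviation estimate from (\ref{cram2}) and (A3), yields an expected count of order
$$n^{1 - \beta - J(a) - a[I(\nu) - \theta \nu + \psi(\theta)]}$$
for signals with $K_{in}/\log n \approx a$ and $Y_{in}/K_{in} \approx \nu$. Maximizing the exponent over the active constraint set is, by construction, $b_B(\theta) - \beta$ or $b_R(\theta) - \beta$, so when the relevant $\beta$ inequality is strict the exponent is strictly negative. Approximating the supremum by a polynomial-size grid in $(\nu, a)$ and using (A3) to absorb the $\alpha_{kn}$ prefactors yields a total expected count of $o(1)$, so under $H_1$ no signal crosses the threshold with high probability and the limiting distribution of the statistic coincides with its null version.

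For each powerful half I would pick an optimizer $(\nu^*, a^*)$ attaining the maximum in $b_B(\theta)$ (respectively $b_R(\theta)$), and perturb slightly into the strict interior of the inequality constraint. The expected number of signals with $(K_{in}/\log n, Y_{in}/K_{in})$ in a small window around $(a^*, \nu^*)$ is then of order $n^{\delta}$ for some $\delta > 0$. Since these indicator events are independent across hypotheses, a second-moment (or Poisson approximation) argument shows that at least one such signal exists with probability tending to one, and the resulting p-value lies below the respective rejection threshold by a polynomial factor, delivering asymptotic power.

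The main technical obstacle is the rank-adjustment case, where the divisor $r_i$ is itself random. The crucial reduction is to establish $r_i = n^{1 - J(K_{in}/\log n) + o(1)}$ uniformly in $i$ on the event $\{K_{in} \geq \lambda_n\}$; this rests on concentration of the empirical distribution of the $K_{in}$'s around its large-deviation profile, for which (A3) --- through the uniform smallness of $|\log \alpha_{kn}|/k$ --- is precisely the right input. Once this uniform rank approximation is in force, the rank-adjustment argument collapses onto the Bonferroni template with the constraint $aI(\nu) = 1$ replaced by $g(\nu, a) = 1$, and the remainder of the proof (first-moment bound for impossibility, second-moment lower bound for possibility) is structurally identical.
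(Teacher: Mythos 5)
Your proposal follows essentially the same route as the paper: first-moment/union bounds over a discretized grid in $(\nu,a)$ for the powerless halves, an existence argument at a slightly perturbed optimizer (the paper uses $a=a_\theta^*+\eta$ and the bound $E[(1-q_n)^{\#B_k}]\le E[e^{-q_n\#B_k}]$, equivalent to your Poisson-approximation step) for the powerful halves, and the key rank concentration $n_k=n^{1-J(a)+o_p(1)}$ derived from (A3) to reduce the rank-adjustment case to the Bonferroni template with $aI(\nu)=1$ replaced by $g(\nu,a)=1$. The only point to tighten is the direction of your perturbation: since the constraints defining $b_B$ and $b_R$ are equalities, the perturbation must push the threshold to $aI(\nu)>1$ (resp. $g(\nu,a)>1$) so that the rejection p-value is polynomially below the null critical value, with continuity of the exponent preserving a positive margin $\delta$.
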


\section{Numerical studies}

We plot in Figure 1 the detection boundaries of HC$_n^{\rm thres}$, HC$_n$,
Bonferroni test and rank-adjustment test when $F_0$ is standard normal and $K_{1n} \sim \max(1,{\rm Poisson}(\lambda_0))$
for $\lambda_0  =a_0 \log n$.

There is a roughly constant
gap between the boundaries of the rank-adjustment and Bonferroni tests for the curves displayed in Figure 1.
The gap is smaller between the boundaries of HC$_n^{\rm thres}$ and HC$_n$ especially when $\theta$ is small.
This agrees with the numerical simulation plots in Figure 2 which shows the improvement of HC$_n^{\rm thres}$
over HC$_n$ to be smaller than that of the rank-adjustment test over the Bonferroni test.

The Poisson distribution has a small variance-to-mean ratio of 1,
so the variation of the sample sizes $K_{in}$ is small when it is Poisson distributed. 
To simulate sample size distributions with larger variance-to-mean ratios,
we considered a negative binomial distribution of $K_{1n}$,
with success probability parameter $p=\tfrac{1}{a_0 \log n}$.
At $n=10^5$ and $a_0=0.5$, 
the variance-to-mean ratio is $p^{-1} = 5.76$.
As expected, see Figure 2,
the improvement of HC$_n^{\rm thres}$ over HC$_n$ is larger when $K_{1n}$ has a negative binomial distribution.

For completeness we also compared against the chi-squared test statistic
$\chi^2_n = \sum_{i=1}^n \tfrac{Y_i^2}{K_{in}}$.
Its detection powers are much smaller for sparse mixtures.

The details behind the simulation plots are as follows.
We first generated $n=10^5$ values of $K_{in}$ for both the Poisson and negative binomial distributions.
The values of the test statistics under the
null $\epsilon_n=0$ for 999 independent runs are computed, 
and the 50th largest or smallest ranked value is taken to be the critical value corresponding to a Type I error probability
of 0.05. 
The detection power of a test statistic is the fraction of times it exceeds the computed critical value over 1000 runs,
for a sparse mixture with $\epsilon_n=n^{-\beta}$.

For the threshold HC test statistic, 
to reduce computational cost,
we maximized HC$_n$ over only four values of $k$,
corresponding to $\max \{ k: n_k \geq m \}$ for $m=3 \times 10^3$, $10^4$, $3 \times 10^4$ and $10^5$.
Likewise in the computation of $R_n$, 
we minimized $n_k p_{(1)k}$ over these four values of $k$.

\begin{figure}
\includegraphics[width=5in]{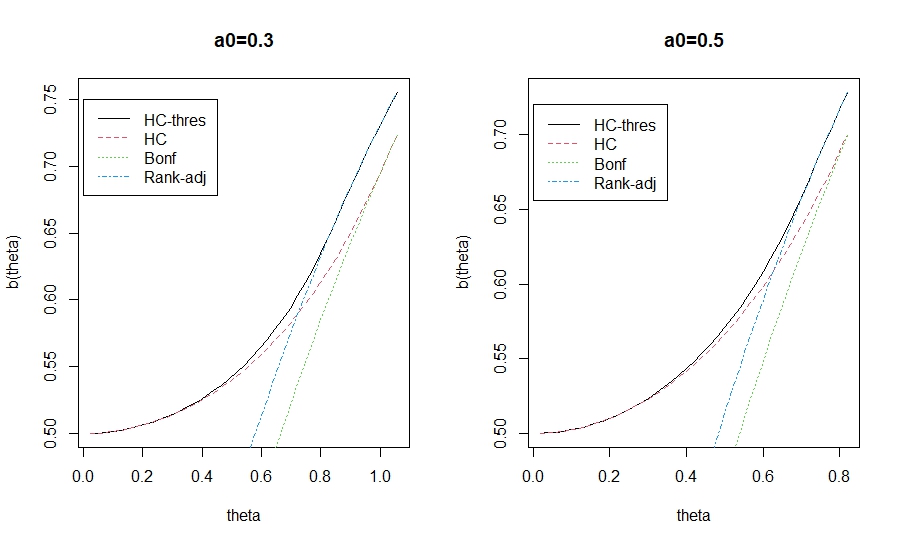}
\caption{ The boundaries $b_J(\theta)$, $b_H(\theta)$, $b_B(\theta)$ and $b_R(\theta)$
for the ${\rm HC}_n^{\rm thres}$, ${\rm HC}_n$, Bonferroni and rank-adjustment test statistics.}
\end{figure}

\begin{figure}
\includegraphics[width=5in]{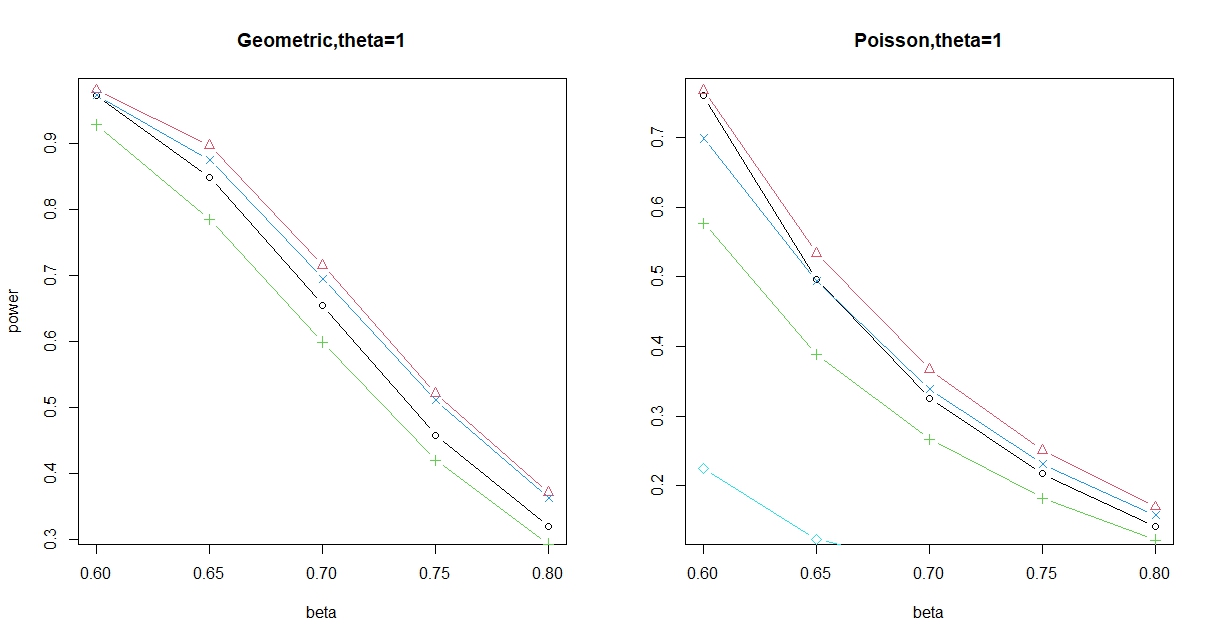}
\includegraphics[width=5in]{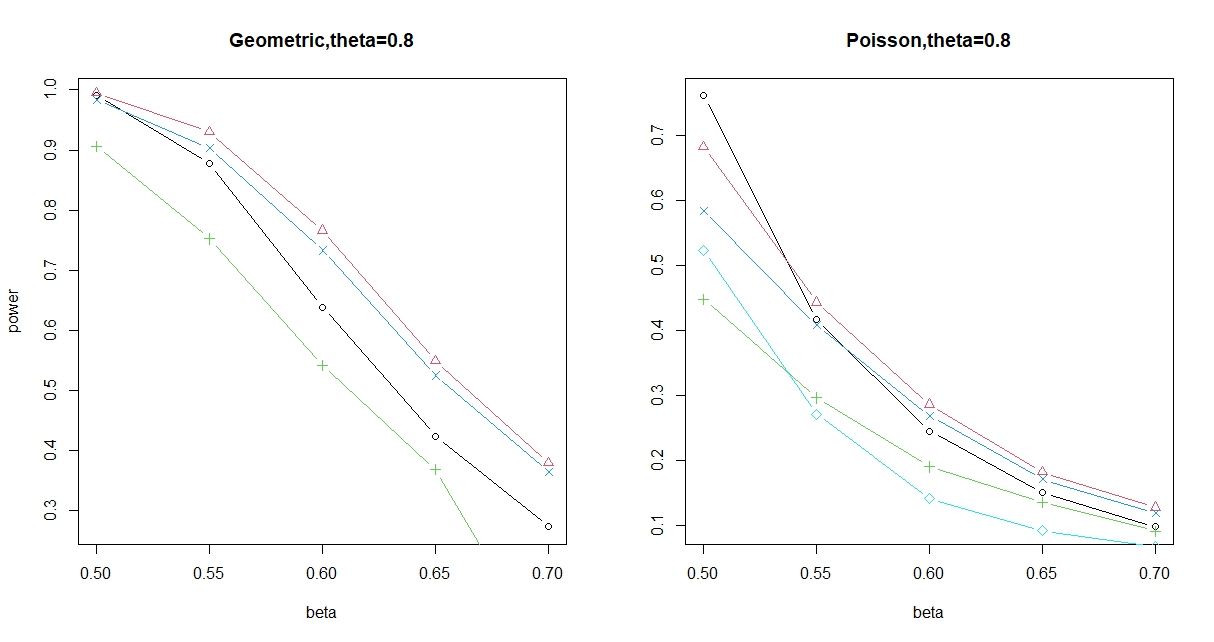}
\caption{ The detection powers of HC$_n$ (black dot), HC$_n^{\rm thres}$ (red triangle),
Bonferroni test (green plus), rank-adjustment test (blue cross) and chi-squared test (light blue diamond),
corresponding to $a_0=0$, $n=10^5$ and Type I error probability of 0.05.}
\end{figure}

\clearpage
\setcounter{equation}{0}
\setcounter{thm}{0}
\setcounter{section}{0}
\setcounter{page}{1}

\begin{center}
{\bf SUPPLEMENT TO ``THRESHOLDING THE HIGHER CRITICISM TEST STATISTIC FOR OPTIMALITY IN A HETEROGENEOUS SETTING''}

\medskip 
Hock Peng Chan\\
{\it Department of Statistics and Data Science} \\
{\it National University of Singapore} 

\end{center}

\section{Introduction}
We provide in this supplement the proofs of Theorems \ref{thm1}--\ref{thm4} of Chan (2023).

\subsection{Notations} 

For two sequences $(a_n)$ and $(b_n)$:
$a_n=O(b_n)$ means that $\tfrac{a_n}{b_n}$ is bounded, 
$a_n=o(b_n)$ means that $\tfrac{a_n}{b_n} \rightarrow 0$,
and $a_n \sim b_n$ means that $\tfrac{a_n}{b_n} \rightarrow 1$.
When $(a_n)$ and $(b_n)$ are random: 
$a_n=O_p(b_n)$ means that $\tfrac{a_n}{b_n}$ is bounded in probability,
$a_n=o_p(b_n)$ means that $\tfrac{a_n}{b_n} \rightarrow 0$ in probability,
and $\stackrel{p}{\rightarrow}$ means converges in probability. 
Let $P_0$, $E_0$, ${\rm Var}_0$ and $P_1$, $E_1$, ${\rm Var}_1$ denote probability, expectation and variance,
under the global null and alternative respectively.
Let $P_{\theta}$, $E_{\theta}$, ${\rm Var}_{\theta}$ denote probability, expectation and variance
with respect to $X_{ij} \sim_{\rm i.i.d.} F_{\theta}$.
Let $\#A$ denote the number of elements in a set $A$,
and let ${\bf 1}$ be the indicator function.
Let $\lceil \cdot \rceil$ denote the greatest integer function.
Let $X_n \Rightarrow F$ denote $X_n$ converging in distribution to $F$.

We consider an exponential family $\{ F_{\theta} \}_{\theta \in \Theta}$ satisfying
$$dF_{\theta}(x) = e^{\theta x-\psi(\theta)} dF_0(x),
$$
with $e^{\psi(\theta)} = E_0 e^{\theta X}$ and $\Theta = \{ \theta: \psi(\theta) < \infty \}$.
The distribution $F_{\theta}$ has mean $\mu(\theta)=\psi'(\theta)$ and the rate function $I$ of $F_0$
satisfies $I(\mu(\theta)) = \theta \mu(\theta)-\psi(\theta)$.

\section{Upper bounds of HC$_n$ and HC$_n^{\rm thres}$ under the global null}

Let $r_i = \# \{ j: K_{jn} \geq K_{in} \}$ be the rank of the sample size $K_{in}$,
for $1 \leq i \leq n$.
Assume without loss of generality,
by re-arranging the hypotheses if necessary,
that $r_1 \leq \cdots \leq r_n$.
Let $p_i$ be the p-value of the $i$th hypotheses.
Let $G_n(t)$ and $U_n(t)$ denote the empirical distribution function and
empirical process of the p-values.
That is,
$$G_n(t) = \tfrac{1}{n} \sum_{i=1}^n {\bf 1}_{\{ p_i \leq t \}} \mbox{ and }
U_n(t) = \sqrt{n}[G_n(t)-t] \mbox{ for } 0 \leq t \leq 1.
$$
Following the notations in Shorack and Wellner (2009), 
let 
$$Z_n(t) = \tfrac{U_n(t)}{\sqrt{t(1-t)}} \mbox{ and } \| Z_n^+ \|_0^p = \sup_{0 < t \leq p} Z_n(t).
$$
It follows that ${\rm HC}_n \leq  \| Z_n^+ \|_0^{\frac{1}{2}}$ and  
${\rm HC}^{\rm thres}_n \leq \max_{m: 2 \leq m \leq n} \| Z_m^+ \|_0^{\frac{1}{2}}$.

\begin{lem} \label{lem1}
Under the global null $X_{ij} \sim_{i.i.d.} F_0$,
$$P_0({\rm HC}_n \geq \log n) \leq P_0({\rm HC}^{\rm thres}_n \geq \log n) \rightarrow 0.
$$
\end{lem}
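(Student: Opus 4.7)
The first inequality is immediate: since $K_{in} \ge 1$ for every $i$, we have $A_1 = \{1, \dots, n\}$ and $n_1 = n$, so ${\rm HC}_{1n} = {\rm HC}_n$ appears in the maximum defining ${\rm HC}_n^{\rm thres}$. For the limit I would condition on $(K_{in})$; under $H_0$ the randomized p-values $p_i$ are i.i.d.\ Uniform$(0,1)$ and independent of the $K_{in}$. The event $\{{\rm HC}_n^{\rm thres} \ge \log n\}$ is the union, over $k \ge 1$ and $1 \le i \le n_k/2$, of $\{n_k p_{(i)k} \le c_i\}$, where $c_i$ is the smaller positive root in $c$ of $(i - c)^2 = (\log n)^2 c(1 - c/n_k)$. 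A short computation gives $c_i \le 2i^2/(\log n)^2$ for $i$ small, while $c_i$ stays within a constant factor of $i$ once $i$ exceeds a threshold of order $(\log n)^2$. The strategy is to split the analysis at $I_0 := (\log n)^2/(2e^2)$.

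For small orders $i \le I_0$, introduce the generalized rank-adjustment statistic $R_n^{(i)} := \min_{k \ge 1} n_k p_{(i)k}$, which specializes to $R_n = R_n^{(1)}$ of Section 5. The key combinatorial observation is: for any size-$i$ subset $S \subset \{1, \dots, n\}$, taking $k_S := \min_{j \in S} K_{jn}$ yields $S \subset A_{k_S}$ with $n_{k_S} = R^S := \max_{j \in S} r_j$ and $p_{(i)k_S} \le \max_{j \in S} p_j$, hence $R_n^{(i)} \le R^S \max_{j \in S} p_j$. A Bonferroni argument over $S$, grouped by $R = R^S$ (there are $\binom{R-1}{i-1}$ such subsets) and combined with $\binom{R-1}{i-1} \le R^{i-1}/(i-1)!$ and the harmonic-sum bound $\sum_{R=1}^n 1/R \le 1+\log n$, gives
\begin{equation*}
P_0\bigl(R_n^{(i)} \le c\bigr) \;\le\; \sum_{R=i}^n \binom{R-1}{i-1}\left(\frac{c}{R}\right)^{\! i} \;\le\; \frac{c^i (1+\log n)}{(i-1)!}.
\end{equation*}
Summing $P_0(R_n^{(i)} \le c_i)$ over $i \le I_0$ produces a bound of order $(1+\log n)/(\log n)^2 \to 0$, dominated by the $i=1$ term, which recovers exactly the Section 5 bound for $R_n$.

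For large orders $i > I_0$, I would apply a direct Bonferroni union bound over the at most $n$ distinct values of $k$ and at most $n/2$ values of $i$: conditionally on $n_k$, $\{n_k p_{(i)k} \le c_i\} = \{\mathrm{Binomial}(n_k, c_i/n_k) \ge i\}$, and a Chernoff bound yields $P_0(n_k p_{(i)k} \le c_i) \le \exp(-c(\log n)^2)$ for some absolute $c > 0$, since $(i-c_i)^2/(c_i+i)$ is of order $(\log n)^2$ in this regime. The union bound contributes $n^2 \exp(-c(\log n)^2) \to 0$, completing the proof. The main technical obstacle is the displayed bound on $P_0(R_n^{(i)} \le c)$: once the identity $n_{k_S} = R^S$ and the harmonic-sum estimate are in hand, the rest of the analysis reduces to straightforward bookkeeping.
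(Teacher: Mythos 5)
Your argument is essentially correct, but it is a genuinely different and much longer route than the paper's. The paper disposes of Lemma 1 in two lines: after reordering the hypotheses so that $r_1 \leq \cdots \leq r_n$, the nested sets $A_k$ make each ${\rm HC}_{kn}$ an HC statistic on an initial segment of i.i.d.\ uniforms, so ${\rm HC}_n^{\rm thres} \leq \max_{2 \leq m \leq n} \| Z_m^+ \|_0^{1/2}$ with $Z_m$ the normalized uniform empirical process; Cs\'{a}ki's (1977) law of the iterated logarithm (Shorack and Wellner, Ch.\ 16, Thm.\ 3) gives $\| Z_m^+ \|_0^{1/2} = o(\sqrt{c_m})$ a.s.\ whenever $\sum_m (m c_m)^{-1} < \infty$, and taking $c_m = (\log m)^2$ finishes the proof. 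Your proof replaces this citation with a self-contained union-bound argument: the split at $i \asymp (\log n)^2$, the Chernoff bound for large order statistics, and above all the bound $P_0(R_n^{(i)} \leq c) \leq c^i(1+\log n)/(i-1)!$ for the generalized rank-adjustment statistic, which is a nice byproduct that genuinely extends the Section 5 Bonferroni computation for $R_n$ and correctly exploits the identity $n_{k_S} = \max_{j \in S} r_j$. What the paper's route buys is brevity at the cost of an external empirical-process result; what yours buys is elementariness and an explicit quantitative rate, at the cost of length and some bookkeeping.

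Two loose ends you should tighten. First, the count $\binom{R-1}{i-1}$ of subsets with $R^S = R$ implicitly assumes the $K_{jn}$ have no ties, which fails here (e.g.\ many cells share the same count, and all tied elements share the maximal rank of their group). The fix is routine: for an achieved rank $R$ there are exactly $R$ indices with $r_j \leq R$ and, say, $t_R$ with $r_j = R$, so the relevant count is at most $t_R \binom{R-1}{i-1}$, and $\sum_R t_R/R = \sum_{j=1}^n 1/r_j \leq 1+\log n$ restores your displayed bound; note that the cruder route $\sum_S \prod_{j \in S}(c/r_j) \leq (c(1+\log n))^i/i!$ does \emph{not} suffice, because the factor $(1+\log n)^i$ ruins the sum for $i$ of order $(\log n)^2$, so the sharper count is really needed. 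Second, $c_i$ depends on $n_k$ through the factor $(1-c/n_k)$; since $c \leq i \leq n_k/2$ forces $1-c/n_k \geq 1/2$, you can take a single $\bar c_i \leq 4i^2/(\log n)^2$ uniform over $k$, which is what the identification of $\bigcup_k \{n_k p_{(i)k} \leq c_i\}$ with $\{R_n^{(i)} \leq \bar c_i\}$ requires. With these patches the argument is complete.
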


\begin{proof}
It follows from Cs\'{a}ki (1977), see also Chapter 16 Theorem 3 of Shorack and Wellner (2009),
that for increasing $c_n$, 
\begin{equation} \label{csaki}
\tfrac{\| Z_n^+ \|_0^{\frac{1}{2}}}{\sqrt{c_n}} \rightarrow 0 \mbox{ a.s. }
\mbox{ if } \sum_{n=1}^{\infty} \tfrac{1}{nc_n} < \infty.
\end{equation}
Lemma \ref{lem1} follows from (\ref{csaki}) with $c_n = (\log n)^2$.
\end{proof}

\section{Proof of Theorem \ref{thm1}}

By considering $-X_{ij}$ instead of $X_{ij}$ when $\theta < 0$,
in the proofs of Theorems \ref{thm1}--\ref{thm4},
we may assume without loss of generality that $\theta>0$.

Let $Y_i = X_{i1} + \cdots + X_{ik_n}$ and for $\theta$ such that $a I(\mu(\theta)) \leq 1$,
let
$$b(\theta) = \left\{ \begin{array}{ll} \tfrac{1}{2}(1+a[\psi(2 \theta)-2 \psi(\theta)]) & \mbox{ if }
0 < \theta \leq \tfrac{\theta_a^+}{2}, \cr
a[\theta \mu_a^+-\psi(\theta)] & \mbox{ if } \tfrac{\theta_a^+}{2} < \theta \leq \theta_a^+, \end{array} \right.
$$
where $\theta_a^+ > 0$ is such that $\mu_a^+ = \psi'(\theta_a^+)$ satisfy
$$I(\mu_a^+) = a^{-1}.
$$
Note that $\tfrac{k_n}{\log n} \rightarrow a$ implies 
$$e^{k_n} = e^{[a+o(1)] \log n} = n^{a+o(1)}.
$$

\begin{thm} \label{thm1}
Consider the sparse testing problem
$$Y_{in} \sim_{i.i.d.} (1-\epsilon_n) F_0^{k_n} + \epsilon_n F_{\theta}^{k_n}
$$
with $\epsilon_n=0$ under $H_0$ and $\epsilon_n=n^{-\beta}$ for some $0 < \beta < 1$ under $H_1$.
Let $\tfrac{k_n}{\log n} \rightarrow a$ for some $a>0$ as $n \rightarrow \infty$.
If $\beta > b(\theta)$ then all test statistics are asymptotically powerless.
If $\beta < b(\theta)$ then the ${\rm HC}$ test statistic is asymptotically powerful.
\end{thm}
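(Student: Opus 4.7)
The plan is to prove the impossibility direction ($\beta > b(\theta)$) by a truncated second-moment bound on the likelihood ratio, and the achievability direction ($\beta < b(\theta)$) by exhibiting a single p-value threshold at which the relevant ${\rm HC}_n$ summand is already of polynomial-in-$n$ size under $H_1$, while Lemma \ref{lem1} controls ${\rm HC}_n$ under $H_0$.

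\textbf{Impossibility.} Write the coordinate likelihood ratio as $L_i = 1 + \epsilon_n(W_i - 1)$ with $W_i = e^{\theta Y_i - k_n\psi(\theta)}$, so $E_0 L_i = 1$ and ${\rm Var}_0(L_i) = \epsilon_n^2[e^{k_n(\psi(2\theta) - 2\psi(\theta))} - 1]$ by the Laplace identity $E_0 e^{2\theta Y_i} = e^{k_n \psi(2\theta)}$. Independence gives $E_0\prod_{i=1}^n L_i^2 = (1 + {\rm Var}_0(L_i))^n$, which stays bounded iff $1 - 2\beta + a[\psi(2\theta) - 2\psi(\theta)] \leq 0$, matching the first branch of $b(\theta)$ for $\theta \leq \theta_a^+/2$. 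For $\theta > \theta_a^+/2$ the second moment explodes and I would truncate: replace $L_i$ by $\tilde L_i = 1 + \epsilon_n(W_i \mathbf{1}_{\{Y_i \leq k_n \mu_a^+\}} - c_{in})$ with $c_{in}$ chosen to keep the null mean at one, and form the corresponding tilted alternative $\tilde P_1$. Cramér's theorem applied to $E_0[W_i^2 \mathbf{1}_{\{Y_i \leq k_n\mu_a^+\}}]$ yields exponential rate $k_n[2\theta\mu_a^+ - 2\psi(\theta) - I(\mu_a^+)]$; using $aI(\mu_a^+) = 1$, the truncated second moment stays bounded precisely when $\beta \geq a[\theta\mu_a^+ - \psi(\theta)]$, recovering the second branch. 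The truncation loses no power because $n\epsilon_n P_\theta(Y_i > k_n \mu_a^+) = n^{-\beta + a[\theta\mu_a^+ - \psi(\theta)] + o(1)} \to 0$ whenever $\beta > b(\theta)$, so $TV(P_1, \tilde P_1) \to 0$ and $\chi^2(P_0, \tilde P_1) \to 0$ together force the minimax risk to $1$.

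\textbf{Achievability.} I would set $\nu^\ast = \mu(2\theta)$ in the first branch and $\nu^\ast$ a point with $aI(\nu^\ast)$ just below $1$ in the second, and study the HC summand at threshold $t_n = 2 P_0(Y_i \geq k_n\nu^\ast) = n^{-aI(\nu^\ast) + o(1)}$. Under $H_1$, the count $nG_n(t_n)$ has mean $nt_n + n\epsilon_n \cdot 2P_\theta(Y_i \geq k_n\nu^\ast) = nt_n + n^{1-\beta + a[\theta\nu^\ast - \psi(\theta) - I(\nu^\ast)] + o(1)}$, with the second term dominating. Reading off the logarithm base $n$ of the HC $Z$-score $(nG_n(t_n) - nt_n)/\sqrt{nt_n(1-t_n)}$ at $t_n$ gives exponent $\tfrac{1}{2}[1 - aI(\nu^\ast)] + a[\theta\nu^\ast - \psi(\theta)] - \beta$, which by the same Legendre calculation that produces $b(\theta)$ is maximized to $b(\theta) - \beta > 0$; a Chebyshev or Bernstein bound on the binomial fluctuation of $nG_n(t_n)$ then shows ${\rm HC}_n \geq n^{c}$ for some $c > 0$ with probability tending to $1$, dwarfing the $\log n$ upper bound from Lemma \ref{lem1} under $H_0$.

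\textbf{Main obstacle.} The delicate point is the truncated second-moment step at the boundary branch: one must establish sharp large-deviation asymptotics for $E_0[W_i^2 \mathbf{1}_{\{Y_i \leq k_n \mu_a^+\}}]$ up to subexponential factors while simultaneously ensuring $TV(P_1, \tilde P_1) \to 0$. This is subtle in the discrete cases where $\theta_a^+ = \infty$ (e.g.\ Bernoulli or Poisson $F_0$) and $\mu_a^+$ is only a supremum of $\{\mu : I(\mu) \leq a^{-1}\}$; the truncation must then be taken at $\mu_a^+ - \eta_n$ for a sequence $\eta_n \downarrow 0$ chosen slowly enough that Cramér's asymptotic applies with rate $I(\mu_a^+ - \eta_n) \to 1/a$, yet fast enough that the tail loss $n\epsilon_n P_\theta(Y_i > k_n(\mu_a^+ - \eta_n))$ still vanishes. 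The p-value randomization for discrete $F_0$ must also be tracked so that the threshold $t_n$ generates exact Uniform$(0,1)$ null p-values and the counts $nG_n(t_n)$ have the claimed means on both hypotheses.
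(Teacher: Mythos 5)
Your achievability argument is essentially the paper's: both evaluate the HC process at the single threshold $p_n=2P_0(Y_1\ge k_n\nu^*)$, with $\nu^*=\mu(2\theta)$ on the first branch and $\nu^*$ on (or just inside) the level set $aI(\nu)=1$ on the second, control the exceedance count by Chebyshev, and read off the exponent $\tfrac12[1-aI(\nu^*)]+a[\theta\nu^*-\psi(\theta)]-\beta=b(\theta)-\beta>0$; the one item you leave implicit and must verify is that the expected signal count $n^{1-\beta}q_n$ itself diverges (the paper checks it equals $n^{\delta(\theta)+\frac12[1-aI(\mu(2\theta))]+o(1)}$), since otherwise the Chebyshev step for the numerator fails. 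The impossibility direction is where you genuinely diverge, and your route is correct. You run the classical truncated second moment under $P_0$: show $E_0(\prod_i\tilde L_i)^2\to1$ after clipping the tilted summand on $\{Y_i\le k_n\mu_a^+\}$, using the rate $E_0[W_i^2{\bf 1}_{\{Y_i\le k_n\mu_a^+\}}]=e^{k_n[2\theta\mu_a^+-2\psi(\theta)-I(\mu_a^+)+o(1)]}$ and $aI(\mu_a^+)=1$, and pay for the clipping with ${\rm TV}(P_1,\tilde P_1)\le n\epsilon_n P_\theta(Y_1>k_n\mu_a^+)=n^{a[\theta\mu_a^+-\psi(\theta)]-\beta+o(1)}\to0$. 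The paper instead truncates the log-likelihood ratio, $\ell_{in}=\log L_{in}$ on $\{Y_{in}\le k_n\mu_a^*\}$, proves $\limsup E_1\ell_n\le0$ and ${\rm Var}_1\ell_n\to0$ \emph{under the alternative}, and converts $P_1(L_n>c)\to0$ for every $c>1$ into ${\rm Risk}(L_n)\to1$ via the elementary bound $2-{\rm Risk}(L_n)\le c+P_1(L_n>c)$; no $\chi^2$-divergence computation under $P_0$ is ever needed. Each approach buys something: yours truncates the likelihood-ratio summand rather than the observation, so you never need $nP_0(Y_1>k_n\mu_a^+)\to0$ --- which Cram\'{e}r alone cannot deliver at the exact level $\mu_a^+$ since $nP_0(Y_1>k_n\mu_a^+)=n^{1-aI(\mu_a^+)+o(1)}=n^{o(1)}$, and which is exactly why the paper must push its truncation up to $\mu_a^*=\mu_a^++\delta(\theta)/(2a\theta)$ in its Lemma 3(a); the paper's version, in turn, needs only moments under $P_1$ and transfers verbatim to the heterogeneous Theorems 2--4. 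Two small cautions on your side: ``stays bounded'' is not sufficient --- you need $E_0\tilde L_n^2\to1$ so that ${\rm TV}(\tilde P_1,P_0)\to0$, which the strict inequality $\beta>b(\theta)$ does give you; and your flagged worry about the degenerate cases $\theta_a^+=\infty$ or $I(\mu_a^+)<a^{-1}$ is legitimate, though the paper's own supplement likewise confines itself to the regular case $I(\mu_a^+)=a^{-1}$.
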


Let $\delta(\theta) = |b(\theta)-\beta|$.

\subsection{Asymptotically powerless}

We show here that all test statistics are asymptotically powerless when $b(\theta) < \beta$.
Let the risk of a test statistic $T$ be defined by
$$\mbox{Risk}(T) = \inf_{c > 0} [P_0(T \geq c)+P_1(T<c)].
$$
The likelihood ratio test (LRT) statistic 
$$L_n = \prod_{i=1}^n L_{in}, \mbox{ where } L_{in} = 1-n^{-\beta} + n^{-\beta} e^{\theta Y_i-k_n \psi(\theta)},
$$
has the smallest risk over all test statistics, 
achieved by concluding $H_0$: $\epsilon_n=0$ when $L_n < 1$
and $H_1$: $\epsilon_n = n^{-\beta}$ when $L_n \geq 1$.

\begin{lem} \label{lem2} The risk of $L_n$ goes to $1$ if
\begin{equation} \label{Pc}
P_1(L_n > c) \rightarrow 0 \mbox{ for all } c > 1.
\end{equation}
\end{lem}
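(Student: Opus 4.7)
The plan is to reduce $\mbox{Risk}(L_n)\to 1$ to showing $E_0(L_n-1)^+\to 0$, and then obtain the latter from a short split-and-change-of-measure argument.

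First, I would rewrite
$$\mbox{Risk}(L_n) = \inf_c [P_0(L_n\geq c)+P_1(L_n<c)] = 1 - \sup_c [P_1(L_n\geq c)-P_0(L_n\geq c)],$$
and recognize the rightmost supremum as the total-variation distance between the laws of $L_n$ under $H_0$ and $H_1$. Since $L_n = dP_1/dP_0$ on $(Y_1,\ldots,Y_n)$, this total-variation distance admits the standard representation $E_0(L_n-1)^+$. Hence it suffices to show $E_0(L_n-1)^+\to 0$.

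Next, I fix any $c>1$ and split
$$E_0(L_n-1)^+ = E_0\bigl[(L_n-1){\bf 1}_{\{1\leq L_n\leq c\}}\bigr] + E_0\bigl[(L_n-1){\bf 1}_{\{L_n>c\}}\bigr].$$
The first summand is bounded by $c-1$. For the second, the change-of-measure identity $E_0[L_n{\bf 1}_A]=P_1(A)$ yields
$$E_0\bigl[(L_n-1){\bf 1}_{\{L_n>c\}}\bigr] \leq E_0\bigl[L_n{\bf 1}_{\{L_n>c\}}\bigr] = P_1(L_n>c) \to 0$$
by hypothesis (\ref{Pc}). Therefore $\limsup_n E_0(L_n-1)^+ \leq c-1$ for every $c>1$, and letting $c\downarrow 1$ gives $E_0(L_n-1)^+\to 0$, which is equivalent to $\mbox{Risk}(L_n)\to 1$.

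There is no genuinely hard step here: once the LRT risk is expressed as $1$ minus a total-variation distance, the conclusion follows from a one-line Markov-type estimate applied to the tail $\{L_n > c\}$ just above the critical value. The only subtlety worth flagging is the identification $\sup_c[P_1(L_n\geq c)-P_0(L_n\geq c)] = E_0(L_n-1)^+$, which relies on $L_n$ being a bona fide likelihood ratio — a property immediate from the mixture structure defining $H_1$.
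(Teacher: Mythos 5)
Your proof is correct and is essentially the paper's argument: both split on the band $\{1\leq L_n\leq c\}$ versus the tail $\{L_n>c\}$, control the band by a change of measure (costing $c-1$), kill the tail with hypothesis (\ref{Pc}), and let $c\downarrow 1$. The reformulation of ${\rm Risk}(L_n)$ as $1-E_0(L_n-1)^+$ via total variation is just a repackaging of the paper's direct bound on $2-{\rm Risk}(L_n)$.
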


\begin{proof}
By (\ref{Pc}),
\begin{eqnarray*}
2-{\rm Risk}(L_n) & = & P_0(L_n < 1) + P_1(1 \leq L_n \leq c) + P_1(L_n>c) \cr
& \leq & P_0(L_n < 1) + c P_0(1 \leq L_n \leq c) + P_1(L_n>c) \cr
& \leq & c + P_1(L_n > c) \rightarrow c.
\end{eqnarray*}
Hence $\liminf {\rm Risk}(L_n) \geq 2-c$, and Lemma \ref{lem2} holds by selecting $c > 1$ arbitrarily close to 1.
\end{proof}

Consider $0 < \theta \leq \theta_a^+$ and let
\begin{equation} \label{muastar}
\mu_a^* = \mu_a^+ + \tfrac{\delta(\theta)}{2a \theta}.
\end{equation}
Let $A_{\theta} = \{ \max_i Y_{in} > k_n \mu_a^* \}$ and $\ell_n = \sum_{i=1}^n \ell_{in}$,
where
\begin{equation} \label{lni}
\ell_{in} = \left\{ \begin{array}{ll} \log L_{in} & \mbox{ if } Y_{in} \leq k_n \mu_a^*, \cr
0 & \mbox{ otherwise.} \end{array} \right.
\end{equation}
Lemma \ref{lem3} is proved in Section 3.3.

\begin{lem} \label{lem3} 
If $b(\theta) < \beta$,
then as $n \rightarrow \infty$,

\smallskip \noindent (a) $P_1(A_{\theta}) \rightarrow 0$.

\smallskip \noindent (b) $\limsup E_1 \ell_n \leq 0$.

\smallskip \noindent (c) ${\rm Var}_1 \ell_n \rightarrow 0$.
\end{lem}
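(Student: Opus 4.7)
The plan is to establish (a), (b), (c) in turn; together (b) and (c) combine via Chebyshev to give $\ell_n\stackrel{p}{\rightarrow}0$ under $H_1$, so that on the event $A_\theta^c$ the untruncated likelihood ratio satisfies $L_n=e^{\ell_n}\rightarrow 1$, supplying the input (\ref{Pc}) needed by Lemma \ref{lem2}.

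For (a), I would apply Cram\'{e}r's large deviation bound (\ref{cram2}) to $P_0(Y_{in}>k_n\mu_a^*)$ and $P_\theta(Y_{in}>k_n\mu_a^*)$. Because $\mu_a^*=\mu_a^++\delta(\theta)/(2a\theta)$ lies strictly to the right of $\mu_a^+$ and $I$ is convex with $I(\mu_a^+)=a^{-1}$, a first-order expansion gives $aI(\mu_a^*)=1+\theta_a^+\delta(\theta)/(2\theta)+O(\delta(\theta)^2)$; combined with $\beta>b(\theta)$, a union bound over the $n$ hypotheses forces both the null and alternative contributions to $P_1(A_\theta)$ to be $o(1)$.

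For (b) and (c), I would set $W_i=L_{in}-1=n^{-\beta}(e^{\theta Y_{in}-k_n\psi(\theta)}-1)$ and observe that on the truncation set $|W_i|\leq n^{-\delta(\theta)/2+o(1)}\rightarrow 0$, so $\log(1+W_i)\leq W_i$ and $(\log(1+W_i))^2\leq 2W_i^2$ eventually, giving $E_1\ell_n\leq nE_1[W_i\mathbf{1}]$ and $\var_1\ell_n\leq 2nE_1[W_i^2\mathbf{1}]$. Decomposing $E_1=(1-n^{-\beta})E_0+n^{-\beta}E_\theta$ and exponentially tilting converts these moments into expressions of the form $e^{k_n[\psi(j\theta)-j\psi(\theta)]}P_{j\theta}(Y_{in}\leq k_n\mu_a^*)$ for $j\in\{0,1,2,3\}$. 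The key identity $a[\theta\mu_a^*-\psi(\theta)]\leq b(\theta)+\delta(\theta)/2=\beta-\delta(\theta)/2$, which one verifies by comparing $b(\theta)$ with the linear function $a[\theta\mu_a^+-\psi(\theta)]$ in the regime $\theta\leq\theta_a^+/2$, then forces each contribution to be of order $n^{-c+o(1)}$ for some $c>0$.

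The principal obstacle will be the $n^{-\beta}E_\theta[W_i^2\mathbf{1}]$ term in part (c), where the factor $e^{k_n[\psi(3\theta)-3\psi(\theta)]}$ can be much larger than the boundary term $n^{2b(\theta)-1}$. The argument must split according to whether $\mu(3\theta)>\mu_a^*$ --- in which case the Cram\'{e}r bound $P_{3\theta}(Y_{in}\leq k_n\mu_a^*)\leq e^{-k_n[I(\mu_a^*)-3\theta\mu_a^*+\psi(3\theta)]}$ absorbs the excess --- or $\mu(3\theta)\leq\mu_a^*$, in which case one must instead use the resulting constraint $3\theta\leq I'(\mu_a^*)\approx\theta_a^+$ to bound $\psi(3\theta)-3\psi(\theta)$ directly. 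Checking that the precise choice $\mu_a^*=\mu_a^++\delta(\theta)/(2a\theta)$ makes the two regimes dovetail so that the subdominant exponent is strictly negative on both sides is the delicate calibration at the heart of the proof.
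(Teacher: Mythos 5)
Your plan matches the paper's proof in all essentials: the same truncation at $\mu_a^*=\mu_a^++\tfrac{\delta(\theta)}{2a\theta}$, the same key comparison $a[\theta\mu_a^+-\psi(\theta)]\leq b(\theta)$ on $(0,\tfrac{\theta_a^+}{2}]$ (the paper isolates this as a separate lemma, proved by matching values at $\tfrac{\theta_a^+}{2}$ and comparing derivatives), the same $\log(1+x)\leq x$ and squared-logarithm moment bounds with the tilted terms indexed by $j\in\{0,1,2,3\}$, and the same case split for the third-moment term at essentially $3\theta$ versus $\theta_a^+$, where the small-$\theta$ case is the paper's convexity argument using $I(\mu(3\theta))\leq a^{-1}$. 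The only cosmetic difference is that the paper tilts to $\theta_a^+$ and bounds the integrand pointwise on the truncation event $\{Y_1\leq k_n\mu_a^*\}$ rather than tilting all the way to $j\theta$ and invoking a lower-tail Cram\'{e}r bound; both yield the same exponents, so your "delicate calibration" does dovetail as you anticipate.
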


\begin{proof}[Proof of Theorem 1 for $b(\theta) < \beta$]
By Lemma \ref{lem3}, Chebyshev's inequality and (\ref{lni}),
for $c>1$,
$$P_1(L_n > c) \leq P_1(A_{\theta})+\tfrac{{\rm Var}_1 \ell_n}{(\log c-E_1 \ell_n)^2} \rightarrow 0.
$$
By Lemma \ref{lem2},
risk of $L_n$ goes to 1,
hence all test statistics are asymptotically powerless.
\end{proof}

\subsection{Asymptotically powerful}

We show here that ${\rm HC}_n$ is asymptotically powerful when $b(\theta) > \beta$.
By Lemma \ref{lem1}, 
it suffices to show $P_1({\rm HC}_n \geq \log n) \rightarrow 1$.

\begin{proof}[Proof of Theorem 1 for $b(\theta) > \beta$]
For $0 < \theta \leq \tfrac{\theta_a^+}{2}$, 
$$\delta(\theta) = \tfrac{1}{2} \{ 1+a[\psi(2 \theta)-2 \psi(\theta)] \} -\beta.
$$
By Cram\'{e}r's Theorem,
\begin{eqnarray} \label{pn}
p_n & := & 2 P_0(Y_1 \geq k_n \mu(2 \theta)) \\ \nonumber
& = & e^{k_n[I(\mu(2 \theta))+o(1)]} \\ \nonumber
& = & n^{-a I(\mu(2 \theta))+o(1)}, \\ \label{qn}
q_n & := & P_{\theta}(Y_1 \geq k_n \mu(2 \theta)) \\ \nonumber
& = & e^{-k_n[I(\mu(2 \theta))-\theta \mu(2 \theta)+\psi(\theta)+o(1)]} \\ \nonumber
& = & n^{-a[I(\mu(2 \theta))-\theta \mu(2 \theta)+\psi(\theta)]+o(1)}.
\end{eqnarray}
By Chebyshev's inequality, under $P_1$,
$$\# \{ i: p_{(i)} \leq p_n \} = np_n+O_p(\sqrt{np_n}) + [1+o_p(1)] n^{1-\beta} q_n,
$$
provided $n^{1-\beta} q_n \rightarrow \infty$.
Hence 
\begin{equation} \label{HCn}
{\rm HC}_n \geq \tfrac{\# \{ i: p_{(i)} \leq p_n \}-n p_n}{\sqrt{np_n}} 
= [1+o_p(1)] \sqrt{\tfrac{n^{1-2 \beta} q_n^2}{p_n}} + O_p(1).
\end{equation}
By (\ref{pn}) and (\ref{qn}),
\begin{eqnarray*} 
\sqrt{\tfrac{n^{1-2 \beta} q_n^2}{p_n}} & = & n^{\frac{1}{2}-\beta-a[\frac{1}{2} I(\mu(2 \theta))-\theta \mu(2 \theta)
+\psi(\theta)]+o(1)} \\ \nonumber
& = & n^{\frac{1}{2}-\beta+a[\psi(2 \theta)/2-\psi(\theta)]+o(1)} \\ \nonumber
& = & n^{\delta(\theta)+o(1)}, \\ 
n^{1-\beta} q_n & = & n^{1-\beta-a[\psi(2 \theta)/2-\psi(\theta)]-\frac{1}{2} a I(\mu(2 \theta))+o(1)} \\ \nonumber
& = & n^{\delta(\theta)+\frac{1}{2}[1-a I(\mu(2 \theta))]+o(1)}.
\end{eqnarray*}
Since $a I(\mu(2 \theta)) \leq 1$,
we conclude $P_1({\rm HC}_n \geq \log n) \rightarrow 1$ from (\ref{HCn}).

For $\tfrac{\theta_a^+}{2} < \theta \leq \theta_a^+$,
$$\delta(\theta) = a[\theta \mu_a^+-\psi(\theta)]-\beta.
$$
By Cram\'{e}r's Theorem,
\begin{eqnarray} \label{pn2}
p_n & := & 2 P_0(Y_1 \geq k_n \mu_a^+) \\ \nonumber
& = & e^{-k_n[I(\mu_a^+)+o(1)]} \\ \nonumber
& = & n^{-1+o(1)}, \\ \label{qn2}
q_n & := & P_{\theta}(Y_1 \geq k_n \mu_a^+) \\ \nonumber
& = & e^{-k_n[I(\mu_a^+)-\theta \mu_a^++\psi(\theta)+o(1)]} \\ \nonumber
& = & n^{-1+a[\theta \mu_a^+-\psi(\theta)]+o(1)}.
\end{eqnarray}
By (\ref{pn2}) and (\ref{qn2}),
\begin{eqnarray*}
\sqrt{\tfrac{n^{1-2 \beta} q_n^2}{p_n}} & = & n^{-\beta+a[\theta \mu_a^+-\psi(\theta)]+o(1)} = n^{\delta(\theta)+o(1)}, \cr
n^{1-\beta} q_n & = & n^{a[\theta \mu_a^+-\psi(\theta)]-\beta+o(1)} \cr
& = & n^{\delta(\theta)+o(1)} \rightarrow \infty,
\end{eqnarray*}
and $P_1({\rm HC}_n \geq \log n) \rightarrow 1$ follows from (\ref{HCn}). 
\end{proof}

\subsection{Proof of Lemma \ref{lem3}}

\begin{proof}[Proof of Lemma \ref{lem3}(a)]
Since $\mu_a^* > \mu_a^+$,
so $a I(\mu_a^*) > a I(\mu_a^+) = 1$.
By Cram\'{e}r's Theorem,
\begin{eqnarray*}
P_1(A_{\theta}) & = & (1-n^{-\beta}) P_0(A_{\theta}) + n^{-\beta} P_{\theta}(A_{\theta}) \cr
& \leq & n P_0(Y_1 > k_n \mu_a^*) + n^{1-\beta} P_{\theta}(Y_1 > k_n \mu_a^+) \cr
& \leq & n e^{-k_n I(\mu_a^*)} + n^{1-\beta} e^{-k_n[I(\mu_a^+)-\theta \mu_a^++\psi(\theta)]} \cr
& = & n^{1-a I(\mu_a^*)+o(1)} + n^{1-\beta-a I(\mu_a^+)+a[\theta \mu_a^+ - \psi(\theta)]+o(1)},
\end{eqnarray*}
and Lemma \ref{lem3}(a) follows from Lemma \ref{lem4} below.
\end{proof}

\begin{lem} \label{lem4}
Consider $0 < \theta \leq \theta_a^+$. 
If $\beta > b(\theta)$ then $\beta > a[\theta \mu_a^+ - \psi(\theta)]$.
\end{lem}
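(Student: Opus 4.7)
\textbf{Proof proposal for Lemma \ref{lem4}.}

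The plan is to split into the two pieces of the definition of $b(\theta)$ in (\ref{btheta}). The regime $\tfrac{\theta_a^+}{2} < \theta \leq \theta_a^+$ is immediate: there $b(\theta) = a[\theta \mu_a^+ - \psi(\theta)]$, so $\beta > b(\theta)$ is exactly the desired inequality. All the content is in the regime $0 < \theta \leq \tfrac{\theta_a^+}{2}$, where $b(\theta) = \tfrac{1}{2}\{1 + a[\psi(2\theta) - 2\psi(\theta)]\}$. It suffices to show the pointwise comparison
\begin{equation*}
\tfrac{1}{2}\{1 + a[\psi(2\theta) - 2\psi(\theta)]\} \;\geq\; a[\theta \mu_a^+ - \psi(\theta)],
\end{equation*}
since then $\beta > b(\theta)$ forces $\beta > a[\theta \mu_a^+ - \psi(\theta)]$.

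After clearing denominators, this inequality simplifies to
\begin{equation*}
1 \;\geq\; a[2\theta\mu_a^+ - \psi(2\theta)].
\end{equation*}
I would recognize the right-hand side as $a\,h(2\theta)$, where $h(t) := t\mu_a^+ - \psi(t)$, while the right-hand side with $1$ replaced by its defining expression for $\mu_a^+$ is $aI(\mu_a^+) = a[\theta_a^+ \mu_a^+ - \psi(\theta_a^+)] = a\,h(\theta_a^+)$. Thus the claim reduces to
\begin{equation*}
h(2\theta) \;\leq\; h(\theta_a^+).
\end{equation*}
Since $h'(t) = \mu_a^+ - \psi'(t) = \mu_a^+ - \mu(t)$ and $\mu$ is strictly increasing, $h$ is nondecreasing on $[0,\theta_a^+]$ and nonincreasing beyond. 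In the regime under consideration $2\theta \leq \theta_a^+$, so monotonicity of $h$ on $[0,\theta_a^+]$ yields $h(2\theta) \leq h(\theta_a^+)$, completing the argument.

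The only step requiring any care is the normalization $I(\mu_a^+) = a^{-1}$, which comes directly from the definition of $\theta_a^+$ in (\ref{Imu}); once this is in hand, the proof is essentially the monotonicity of $h$ on $[0,\theta_a^+]$, so I do not foresee a genuine obstacle. (If one wished to handle $\theta_a^+ = \infty$, which happens only in degenerate cases noted after (\ref{Imu}), one would work with the limiting form of $h$ at $\theta_a^+$; since $\mu(t) \to \mu_a^+$ as $t \uparrow \theta_a^+$, the same monotonicity conclusion applies.)
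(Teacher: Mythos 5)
Your proof is correct and is essentially the paper's argument in a slightly different packaging: the paper compares the two branches $g(\theta)=\tfrac{1}{2}\{1+a[\psi(2\theta)-2\psi(\theta)]\}$ and $a[\theta\mu_a^+-\psi(\theta)]$ by checking they agree at $\theta=\tfrac{\theta_a^+}{2}$ and comparing derivatives via monotonicity of $\mu=\psi'$, which after cancellation is exactly your reduction to $h(2\theta)\leq h(\theta_a^+)$ with $h(t)=t\mu_a^+-\psi(t)$. Both arguments rest on the same two facts --- the Legendre identity $I(\mu_a^+)=\theta_a^+\mu_a^+-\psi(\theta_a^+)=a^{-1}$ and the monotonicity of $\psi'$ --- so there is nothing to add.
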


\begin{proof}
Let $g(\theta) = \tfrac{1}{2}(1+a[\psi(2 \theta)-2 \psi(\theta)])$ and $h(\theta)=a[\theta \mu_a^+-\psi(\theta)]$.
It suffices to show $g(\theta) \geq h(\theta)$ for $0 < \theta \leq \tfrac{\theta_a^+}{2}$.
Check that
\begin{eqnarray} \label{hg}
h(\tfrac{\theta_a^+}{2}) & = & a[\tfrac{\theta_a^+ \mu_a^+}{2}-\psi(\tfrac{\theta_a^+}{2})] \\ \nonumber
& = & a[\tfrac{I(\mu_a^+)+\psi(\theta_a^+)}{2} - \psi(\tfrac{\theta_a^+}{2})] \\ \nonumber
& = & g(\tfrac{\theta_a^+}{2}).
\end{eqnarray}
For $0 < \theta < \tfrac{\theta_a^+}{2}$,
\begin{eqnarray*}
g'(\theta) & = & a[\psi'(2 \theta)-\psi'(\theta)] =  a[\mu(2 \theta)-\mu(\theta)], \cr
h'(\theta) & = & a[\mu_a^+-\psi(\theta)] = a[\mu(\theta_a^+)-\mu(\theta)].
\end{eqnarray*}
Since $\mu(\cdot)$ is increasing,
we conclude $g'(\theta) < h'(\theta)$, and Lemma \ref{lem4} follows from (\ref{hg}).
\end{proof}

\begin{proof}[Proof of Lemma \ref{lem3}(b)]
Since $\log(1+x) \leq x$ and $E_0 e^{\theta Y_1-k_n \psi(\theta)} = 1$,
for $0 < \theta \leq \tfrac{\theta_a^+}{2}$,
$$2 \beta = 1+a[\psi(2 \theta)-2 \psi(\theta)]+2 \delta(\theta).
$$
Hence
\begin{eqnarray} \label{E1}
E_1 \ell_{1n} & \leq & n^{-2 \beta} (E_{\theta} e^{\theta Y_1 - k_n \psi(\theta)}-1) \\ \nonumber
& \leq & n^{-2 \beta} e^{k_n[\psi(2 \theta)-2 \psi(\theta)]} \\ \nonumber
& = & n^{-2 \beta+a[\psi(2 \theta)-2 \psi(\theta)]+o(1)} \\ \nonumber
& = & n^{-2 \delta(\theta)-1+o(1)},
\end{eqnarray}
and so $\limsup E_1 \ell_n = \limsup (n E_1 \ell_{1n}) \leq 0$.

For $\tfrac{\theta_a^+}{2} < \theta \leq \theta_a^+$,
by (\ref{muastar}),
\begin{eqnarray*}
2 a \theta \mu_a^* & = & 2a \theta \mu_a^+ + \delta(\theta), \cr
2 \beta &  =& 2a[\theta \mu_a^+ - \psi(\theta)] + 2 \delta(\theta).
\end{eqnarray*}
Hence
\begin{eqnarray} \label{E2}
E_1 \ell_{1n} & \leq & n^{-2 \beta} E_{\theta}(e^{\theta Y_1-k_n \psi(\theta)} {\bf 1}_{\{ Y_1 \leq k_n \mu_a^* \}}) \\ \nonumber
& \leq & n^{-2 \beta} E_{\theta_a^+}(e^{(2 \theta-\theta_a^+) Y_1-k_n[2 \psi(\theta)-\psi(\theta_a^+)]}
{\bf 1}_{\{ Y_1 \leq k_n \mu_a^* \}}) \\ \nonumber
& \leq & n^{-2 \beta+a[(2 \theta-\theta_a^+) \mu_a^+-2 \psi(\theta)+\psi(\theta_a)]+\delta(\theta)+o(1)} \\ \nonumber
& = & n^{-\delta(\theta)-a I(\mu_a^+)+o(1)} \\ \nonumber
& = & n^{-\delta(\theta)-1+o(1)},
\end{eqnarray}
and so $\limsup E_1 \ell_n = \limsup (n E_1 \ell_{1n}) \leq 0$.
\end{proof}

\begin{proof}[Proof of Lemma \ref{lem3}(c)]
Consider first $0 < \theta \leq \tfrac{\theta_a^+}{3}$.
Since $|\log(1+x)|^2 \leq 4x^2$ for $x \geq -\tfrac{1}{2}$,
\begin{eqnarray} \label{V1}
{\rm Var}_1 \ell_n & = & n {\rm Var}_1 \ell_{1n} \\ \nonumber
& \leq & n E_1 \ell_{1n}^2 \\ \nonumber
& \leq & 4n(1-n^{-\beta}) n^{-2 \beta} E_0 (e^{\theta Y_1-k_n \psi(\theta)}-1)^2 \\ \nonumber
& & \qquad + 4n^{1-3 \beta} E_{\theta}(e^{\theta Y_1-k_n \psi(\theta)}-1)^2.
\end{eqnarray}
Check that
\begin{eqnarray} \label{V2}
E_0(e^{\theta Y_1-k_n \psi(\theta)}-1)^2 & \leq & E_0 e^{2 \theta Y_1-2k_n \psi(\theta)} \\ \nonumber
& = & e^{k_n[\psi(2 \theta)-2\psi(\theta)]} \\ \nonumber
& = & n^{a[\psi(2 \theta)-2 \psi(\theta)]+o(1)} \\ \nonumber
& = & n^{2 \beta-2 \delta(\theta)-1+o(1)}, \\ \label{V3}
E_{\theta} (e^{\theta Y_1-k_n \psi(\theta)}-1)^2 & \leq & E_0 e^{3 \theta Y_1-3 k_n \psi(\theta)} \\ \nonumber
& = & e^{k_n[\psi(3 \theta)-3 \psi(\theta)]} \\ \nonumber
& = & n^{a[\psi(3 \theta)-3 \psi(\theta)]+o(1)}.
\end{eqnarray}
We conclude ${\rm Var}_1 \ell_n \rightarrow 0$ from (\ref{V1})--(\ref{V3}) and Lemma \ref{lem6} below.

For $\tfrac{\theta_a^+}{3} < \theta \leq \theta_a^+$,
\begin{eqnarray} \label{V4}
{\rm Var}_1 \ell_n & \leq & n E_1 \ell_{1n}^2 \\ \nonumber
& \leq & 4n(1-n^{-\beta}) n^{-2 \beta} E_0(e^{\theta Y_1-k_n \psi(\theta)} {\bf 1}_{\{ Y_1 \leq k_n \mu_a^* \}}
-1)^2 \\ \nonumber
& & \qquad + 4n^{1-3 \beta} E_{\theta} (e^{\theta Y_1-k_n \psi(\theta)} {\bf 1}_{\{ Y_1 \leq k_n \mu_a^* \}}-1)^2.
\end{eqnarray}
By the calculations in (\ref{E2}),
\begin{eqnarray} \label{3.19}
& & n^{1-2 \beta} E_0(e^{\theta Y_1-k_n \psi(\theta)} {\bf 1}_{\{ Y_1 \leq k_n \mu_a^* \}}-1)^2 \\ \nonumber
& \leq & n^{1-2 \beta} E_{\theta}(e^{\theta Y_1-k_n \psi(\theta)} {\bf 1}_{\{ Y_1 \leq k_n \mu_a^* \}}) \\ \nonumber
& = & n^{-\delta(\theta)+o(1)} \rightarrow 0.
\end{eqnarray}
Moreover,
\begin{eqnarray} \label{3.20}
& & n^{1-3 \beta} E_{\theta} (e^{\theta Y_1-k_n \psi(\theta)} {\bf 1}_{\{ Y_1 \leq k_n \mu_a^* \}}-1)^2 \\ \nonumber
& \leq & n^{1-3 \beta} E_{\theta_a^+}(e^{(3 \theta-\theta_a^+) Y_1-k_n[3 \psi(\theta)-\psi(\theta_a^+)]}
{\bf 1}_{\{ Y_1 \leq k_n \mu_a^* \}}) \\ \nonumber
& \leq & n^{1-3 \beta+a[(3 \theta-\theta_a^+) \mu_a^+-3 \psi(\theta)+\psi(\theta_a^+)]+3 \delta(\theta)/2+o(1)} \\ \nonumber
& = & n^{1-3 \delta(\theta)/2-a I(\mu_a^+)+o(1)} = n^{-3 \delta(\theta)/2+o(1)} \rightarrow 0,
\end{eqnarray}
We conclude ${\rm Var}_1 \ell_n \rightarrow 0$ from (\ref{V4})--(\ref{3.20}).
\end{proof}

\begin{lem} \label{lem6} 
Consider $0 < \theta \leq \tfrac{\theta_a^+}{3}$.
If $\beta > b(\theta)$ then
$$1-3 \beta +a[\psi(3 \theta)-3 \psi(\theta)] < 0.
$$
\end{lem}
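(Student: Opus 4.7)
\medskip
\noindent\textbf{Proof proposal for Lemma \ref{lem6}.}

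The plan is to reduce the strict inequality to a single pointwise estimate on the cumulant generating function $\psi$, and then to exploit monotonicity together with the Legendre relation at $\theta_a^+$. Since $0 < \theta \leq \tfrac{\theta_a^+}{3} \leq \tfrac{\theta_a^+}{2}$, the first branch of (\ref{btheta}) applies, so $b(\theta) = \tfrac{1}{2}(1+a[\psi(2\theta)-2\psi(\theta)])$. Starting from the hypothesis $\beta > b(\theta)$, I would multiply by $3$ to get $3\beta > \tfrac{3}{2}+\tfrac{3a}{2}[\psi(2\theta)-2\psi(\theta)]$, so it suffices to show that $\tfrac{3}{2}+\tfrac{3a}{2}[\psi(2\theta)-2\psi(\theta)] \geq 1+a[\psi(3\theta)-3\psi(\theta)]$. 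After cancelling the $-3a\psi(\theta)$ terms on both sides, this is equivalent to
$$G(\theta) := a\psi(3\theta) - \tfrac{3a}{2}\psi(2\theta) \leq \tfrac{1}{2}.$$

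Next I would establish that $G$ is nondecreasing on $[0,\theta_a^+/3]$: one has $G(0)=0$ (since $\psi(0)=0$) and $G'(\theta)=3a[\psi'(3\theta)-\psi'(2\theta)]=3a[\mu(3\theta)-\mu(2\theta)] \geq 0$, because $\mu=\psi'$ is increasing in an exponential family. Consequently, it is enough to verify $G(\theta_a^+/3) \leq \tfrac{1}{2}$.

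For the boundary evaluation, I would use Legendre duality twice. First, $I(\mu_a^+)=1/a$ combined with $I(\mu(\theta_a^+))=\theta_a^+\mu_a^+-\psi(\theta_a^+)$ gives $\psi(\theta_a^+)=\theta_a^+\mu_a^+-1/a$. Second, the general Legendre inequality $\psi(\theta) \geq \theta \mu - I(\mu)$ (valid for every admissible $\mu$) applied with $\mu=\mu_a^+$ and $\theta=2\theta_a^+/3$ yields $\psi(2\theta_a^+/3) \geq \tfrac{2}{3}\theta_a^+\mu_a^+-1/a$. Plugging both into $G(\theta_a^+/3)$ gives
$$G(\tfrac{\theta_a^+}{3}) \leq a(\theta_a^+\mu_a^+-1/a) - \tfrac{3a}{2}\bigl(\tfrac{2}{3}\theta_a^+\mu_a^+-1/a\bigr) = (a\theta_a^+\mu_a^+-1) - (a\theta_a^+\mu_a^+ - \tfrac{3}{2}) = \tfrac{1}{2}.$$
Combining the monotonicity of $G$ with this boundary bound proves $G(\theta) \leq \tfrac{1}{2}$ on $[0,\theta_a^+/3]$, so $3b(\theta) \geq 1+a[\psi(3\theta)-3\psi(\theta)]$, and the strict inequality $\beta > b(\theta)$ then gives the required strict conclusion.

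The proof is essentially routine once the right reduction is identified; no step is really an obstacle. If anything is delicate, it is recognising that the threshold $\theta_a^+/3$ in Lemma \ref{lem6} is exactly the value at which the Legendre lower bound on $\psi(2\theta_a^+/3)$ saturates what is needed for $G \leq \tfrac{1}{2}$, explaining why the case $\theta > \theta_a^+/3$ had to be handled separately in (\ref{V4})--(\ref{3.20}) via tilting to $\theta_a^+$.
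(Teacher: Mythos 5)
Your proof is correct and follows essentially the same route as the paper: both arguments reduce the claim to the inequality $2a\psi(3\theta)-3a\psi(2\theta)\le 1$ on $(0,\tfrac{\theta_a^+}{3}]$ and establish it from the convexity of $\psi$ together with the relation $I(\mu_a^+)=\theta_a^+\mu_a^+-\psi(\theta_a^+)=a^{-1}$. The only difference is organizational: the paper proves the inequality pointwise via the tangent bound $\psi(2\theta)-\psi(3\theta)\ge-\theta\psi'(3\theta)$, arriving at $1-aI(\mu(3\theta))\ge 0$, whereas you prove monotonicity of $G$ and then evaluate at the endpoint $\tfrac{\theta_a^+}{3}$ using Legendre duality --- the same two ingredients in a slightly different order.
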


\begin{proof}
Consider $0 < \theta \leq \tfrac{\theta_a^+}{3}$ and let 
\begin{eqnarray*}
g(\theta) & = & \tfrac{1}{2}(1+a[\psi(2 \theta)-2 \psi(\theta)]), \cr
j(\theta) & = & \tfrac{1}{3}(1+a[\psi(3 \theta)-3 \psi(\theta)]).
\end{eqnarray*}
Since $\beta > \tfrac{1}{2}(1+a[\psi(2 \theta)-2 \psi(\theta)])$,
it suffices to show that $g(\theta) \geq j(\theta)$.
Since $\mu(\cdot)=\psi'(\cdot)$ is increasing,
\begin{eqnarray*}
6[g(\theta)-j(\theta)] & = & 1+a[3 \psi(2 \theta)-2 \psi(3 \theta)] \cr
& = & 1 + 3a [\psi(2 \theta)-\psi(3 \theta)] + a \psi(3 \theta) \cr
& \geq & 1-3a \theta \psi'(3 \theta) + a \psi(3 \theta) \cr
& = & 1-a[3 \theta \mu(3 \theta)-\psi(3 \theta)] \cr
& =& 1-a I(\mu(3 \theta)) \cr
& \geq & 0,
\end{eqnarray*}
with the last inequality following from $I(\mu(3 \theta)) \leq I(\mu_a^+)=a^{-1}$.
\end{proof} 

\section{Proof of Theorem \ref{thm2}}

Let $Y_{in} = \sum_{j=1}^{K_{in}} X_{ij}$,
with $X_{ij} \sim_{\rm i.i.d.} F_0$.
Consider a function $J$ such that:

\smallskip \noindent
(A1) $J(a)=0$ for $0 \leq a \leq a_0$ for some $a_0>0$,

\smallskip \noindent
(A2) $J'(a)$ is continuous and increasing on $[a_0,\infty)$ with $J'(a_0)=0$ and $\lim_{a \rightarrow \infty} J'(a)=\infty$,

\smallskip \noindent
(A3) $P(K_{in} = k) = \alpha_{kn} n^{-J(\frac{k}{\log n})}$ with $\sup_{k \geq \lambda_n} (\tfrac{|\log \alpha_{kn}|}{k})
\rightarrow 0$,
for $\lambda_n = a_0 \log n$.

\medskip \noindent
The proof of Theorem \ref{thm2} applies Lemma \ref{pre}, 
which is proved in Section 4.3.

\begin{lem} \label{pre}
Let $a_1$ be such that $J(a_1)=1$.
For any $\omega>0$,
$$\sum_{k \leq a_1 \log n} e^{J(\frac{k}{\log n})} P(K_{1n}=k) + n P(K_{1n} > a_1 \log n) = o(n^{\omega}).
$$
\end{lem}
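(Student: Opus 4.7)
The plan is to handle the two summands separately. For the first sum, I would observe that since $J \equiv 0$ on $[0,a_0]$ by (A1) and $J$ is increasing on $[a_0,\infty)$ with $J(a_1) = 1$, one has $J(k/\log n) \leq 1$ throughout the range $k \leq a_1 \log n$, so $e^{J(k/\log n)} \leq e$. Bounding by the total probability mass then gives $\sum_{k \leq a_1 \log n} e^{J(k/\log n)} P(K_{1n}=k) \leq e$, which is trivially $o(n^{\omega})$ for any $\omega>0$.

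For the tail term $n P(K_{1n} > a_1 \log n)$, the key observation is that (A2) forces $J(a)/a \to \infty$, so I can fix a constant $M > \max(a_1,1)$ such that $J(a) \geq 2a$ for all $a \geq M$; crucially this $M$ depends only on $J$, not on $n$ or $\omega$. I then split the tail at $k = M \log n$. On the near part $a_1 \log n < k \leq M \log n$, assumption (A3) yields $\alpha_{kn} \leq e^{\epsilon_n k}$ where $\epsilon_n := \sup_{k \geq \lambda_n}|\log \alpha_{kn}|/k \to 0$, hence $\alpha_{kn} \leq n^{\epsilon_n M}$; combined with $J(k/\log n) \geq 1$, each summand of $n P(K_{1n}=k)$ is at most $n^{\epsilon_n M}$. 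Since there are only $O(\log n)$ terms, the near-part contribution is $O((\log n) n^{\epsilon_n M})$, which becomes $o(n^\omega)$ once $n$ is large enough that $\epsilon_n M < \omega/2$.

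On the far part $k > M \log n$, the inequality $J(k/\log n) \geq 2k/\log n$ gives $n^{-J(k/\log n)} \leq e^{-2k}$, while $\alpha_{kn} \leq e^{\epsilon_n k} \leq e^{k}$ for $n$ large. Each summand of $n P(K_{1n}=k)$ is therefore at most $n e^{-k}$, and summing the geometric series from $k > M \log n$ yields a bound $O(n^{1-M}) = o(1)$, hence $o(n^\omega)$. Combining all three bounds finishes the proof.

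Main obstacle: there is no deep analytic difficulty here—the lemma is essentially a bookkeeping argument that combines the superlinear growth of $J$ from (A2) with the subexponential control of $\alpha_{kn}$ from (A3). The only point requiring care is the order in which constants are chosen: $M$ must be fixed purely in terms of $J$ (so that $J(a) \geq 2a$ holds uniformly beyond $M$), and only afterward does one invoke $\epsilon_n \to 0$ to absorb the spurious factor $n^{\epsilon_n M}$ into $n^{\omega/2}$. Swapping this order would circularly tie $M$ to $n$.
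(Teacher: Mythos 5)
Your treatment of the tail term $nP(K_{1n}>a_1\log n)$ is correct and is a legitimate alternative to the paper's: the paper lower-bounds $J$ by its tangent line at $a_1$, namely $J(a)\ge 1+(a-a_1)J'(a_1)$, and sums the resulting geometric series, whereas you split the tail a second time at $M\log n$ and invoke the superlinear growth $J(a)\ge 2a$ beyond a fixed $M$. Both routes combine (A2) with the subexponential control $\alpha_{kn}\le e^{\epsilon_n k}$, $\epsilon_n:=\sup_{k\ge\lambda_n}|\log\alpha_{kn}|/k\to 0$, coming from (A3), and your version is if anything more explicit about why the (possibly growing) factor $\alpha_{kn}$ cannot overwhelm the decay of $n^{-J(k/\log n)}$. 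Your remark about fixing $M$ before letting $\epsilon_n\to 0$ is exactly the right point of care.

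The issue is with the first sum. Your bound $e^{J(k/\log n)}\le e$ does prove the statement literally as printed, but the weight $e^{J(k/\log n)}$ is evidently a typo for $n^{J(k/\log n)}$: the paper's own proof rewrites the sum over $\lambda_n<k\le a_1\log n$ as $\sum\alpha_{kn}$, which is what one gets from $n^{J(k/\log n)}P(K_{1n}=k)=\alpha_{kn}$ and not from $e^{J(k/\log n)}P(K_{1n}=k)$, and every downstream application of the lemma (in the proofs of Lemma 7(a)--(c) and of Theorem 4) needs the factor $n^{J(k/\log n)}$ to cancel the $n^{-J(k/\log n)}$ inside $P(K_{1n}=k)$. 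Under the corrected weight your estimate $e^{J}\le e$ gives nothing, so as written you have proved a strictly weaker statement than the one the paper actually uses. The repair is short and uses a device you already deploy for the near tail: for $k\le\lambda_n$ one has $J=0$ and the sum is at most $1$, while for $\lambda_n<k\le a_1\log n$ one has $n^{J(k/\log n)}P(K_{1n}=k)=\alpha_{kn}\le n^{a_1\epsilon_n}$ with $\epsilon_n\to 0$, and there are only $O(\log n)$ terms, giving $o(n^{\omega})$. This is less a flaw in your reasoning than a consequence of taking the misprinted statement at face value, but the first-term argument needs to be replaced for the lemma to serve its purpose in the paper.
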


For $\theta$ such that $a_0 I(\mu(\theta)) \leq 1$,
define 
\begin{eqnarray} \label{bJtheta}
b_J(\theta) & = & \max_{(\nu,a): g(\nu,a) \leq 1} f_{\theta}(\nu,a), \\ \label{gnu}
\mbox{where } g(\nu,a) & = &  a I(\nu)+J(a), \\ \label{fnu}
f_{\theta}(\nu,a) & = & a[\theta \nu-\psi(\theta)] + \tfrac{1}{2}[1-g(\nu,a)].
\end{eqnarray}

\begin{thm} \label{thm2}
Consider the test of $H_0$: $\epsilon_n=0$ versus $H_1$: $\epsilon_n=n^{-\beta}$ for some $0 < \beta < 1$,
with sample sizes $K_{in}$ i.i.d. with rate function $J$ satisfying (A1)--(A3).
If $\beta > b_J(\theta)$ then all test statistics are asymptotically powerless.
If $\beta < b_J(\theta)$ then ${\rm HC}_n^{\rm thres}$ is asymptotically powerful.
\end{thm}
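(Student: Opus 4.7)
The plan is to adapt the proof template of Theorem 1 to accommodate the random sample sizes $K_{in}$, with two substantive modifications. First, the truncation region used to control the likelihood ratio in the impossibility direction is genuinely two-dimensional, with boundary dictated by the curve $g(\nu,a)=1$ in the $(K_{in}/\log n,\,Y_{in}/K_{in})$ plane. Second, the achievability direction must select a threshold $k^{\star}$ matched to the optimal scale $a^{\star}$ attaining the maximum in (\ref{bJtheta}), so that $\mathrm{HC}_{k^{\star}n}$ operates on the correct subset $A_{k^{\star}}$ of size $n_{k^{\star}}\approx n^{1-J(a^{\star})}$.

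For the impossibility direction $\beta>b_J(\theta)$, since $K_{in}$ has identical law under $H_0$ and $H_1$ the likelihood ratio factors as
\[
L_n=\prod_{i=1}^{n}L_{in},\qquad L_{in}=1-n^{-\beta}+n^{-\beta}e^{\theta Y_{in}-K_{in}\psi(\theta)},
\]
and by the analogue of Lemma \ref{lem2} it suffices to establish $P_1(L_n>c)\rightarrow 0$ for every $c>1$. I would define a truncation profile $\nu^{\star}(a)$ on $[a_0,a_1]$ (with $J(a_1)=1$) via $g(\nu^{\star}(a),a)=1+\eta$ for a small constant $\eta>0$, and set $E_{in}=\{K_{in}\leq a_1\log n\}\cap\{Y_{in}\leq K_{in}\nu^{\star}(K_{in}/\log n)\}$. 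With $\ell_{in}=(\log L_{in})\mathbf{1}_{E_{in}}$ and $\ell_n=\sum_i\ell_{in}$, the proof splits into three Lemma \ref{lem3}-style steps: (a) $P_1(\bigcup_i E_{in}^c)\rightarrow 0$ via a union bound, a conditional Cram\'er estimate given $K_{in}=k$ combined with assumption (A3), and Lemma \ref{pre} to dispose of $K_{in}>a_1\log n$; (b) $\limsup E_1\ell_n\leq 0$ from $\log L_{1n}\leq n^{-\beta}(e^{\theta Y_{1n}-K_{1n}\psi(\theta)}-1)$, integrating over $K_{1n}$ through (A3), and using the feasibility inequality $f_\theta(\nu,a)\leq b_J(\theta)<\beta$ on the truncation region; (c) $\mathrm{Var}_1\ell_n\rightarrow 0$ by bounding $E_1\ell_{1n}^2$ via $E_0[e^{2\theta Y-2K\psi(\theta)}\mathbf{1}_{E}]$ and a Laplace/Cram\'er estimate of the truncated moment generating function conditional on $K=k$.

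For the achievability direction $\beta<b_J(\theta)$, pick a maximizer $(\nu^{\star},a^{\star})$ in (\ref{bJtheta}) with $f_\theta(\nu^{\star},a^{\star})>\beta$, perturbing slightly so that $g(\nu^{\star},a^{\star})<1$; the boundary case $g=1$ is handled by a direct signal-count argument, as the null count becomes $O(1)$ while the signal count still grows like $n^{f_\theta-\beta}$. Set $k^{\star}=\lceil a^{\star}\log n\rceil$ and analyze $\mathrm{HC}_{k^{\star}n}$. By (A3), $n_{k^{\star}}=n^{1-J(a^{\star})+o(1)}$, and choosing the p-value cutoff $p_n=2P_0(Y_{k^{\star}}\geq k^{\star}\nu^{\star})$ yields $p_n=n^{-a^{\star}I(\nu^{\star})+o(1)}$ by Cram\'er's theorem. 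Under $H_0$ the count $\#\{i\in A_{k^{\star}}:p_i\leq p_n\}$ concentrates at $n_{k^{\star}}p_n$ with Chebyshev fluctuation $O_p(\sqrt{n_{k^{\star}}p_n})$; under $H_1$ the signal contribution from cells with $K_{1n}=k^{\star}$ is of order $n^{1-\beta-J(a^{\star})-a^{\star}[I(\nu^{\star})-\theta\nu^{\star}+\psi(\theta)]}$, and a brief algebraic rearrangement using (\ref{gnu})--(\ref{fnu}) shows that this dominates $\sqrt{n_{k^{\star}}p_n}=n^{(1-g(\nu^{\star},a^{\star}))/2+o(1)}$ precisely when $f_\theta(\nu^{\star},a^{\star})>\beta$. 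Combined with the Type-I bound from Lemma \ref{lem1}, and the inequality $\mathrm{HC}_n^{\mathrm{thres}}\geq\mathrm{HC}_{k^{\star}n}$, this gives $P_1(\mathrm{HC}_n^{\mathrm{thres}}\geq\log n)\rightarrow 1$.

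The main obstacle will be step (c) of the impossibility argument: the truncated second moment must stay uniformly negative over the entire two-dimensional truncation region, in contrast to the one-dimensional region treated in Theorem 1. A Laplace estimate reveals that the exponent of $nE_1\ell_{1n}^2$ is essentially $2\max_{a\leq a_1}f_\theta(\nu^{\star}(a),a)-2\beta$, so strict negativity forces the slack $\eta$ to be tied to the gap $b_J(\theta)-\beta$ and invokes continuity of the value function in (\ref{bJtheta}). A secondary difficulty is the interior-versus-boundary dichotomy highlighted in the remarks following (\ref{bJtheta}): when the unconstrained maximum $g(\mu(2\theta),a_\theta)$ exceeds one, $\nu^{\star}(\cdot)$ must be selected consistently with the Lagrange characterization $(\theta^{\star},a_\theta^{\star})$, which requires a monotonicity argument in the spirit of Lemma \ref{lem4} to ensure $\nu^{\star}(a)$ is well-defined and monotone in $a$.
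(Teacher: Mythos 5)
Your proposal follows essentially the same route as the paper's proof: the impossibility direction truncates the log-likelihood ratio on the event $\{Y_{in}\le K_{in}\mu^*_{K_{in}/\log n}\}$ with the truncation surface sitting just beyond the curve $g(\nu,a)=1$ (the paper takes $\mu_a^*=\mu_a^++\tfrac{\delta(\theta)}{2a\theta}$, matching your $\eta$-slack tied to the gap), proves the same three-part lemma (union bound, first moment, second moment) with the conditional Cram\'{e}r estimates summed over $k$ via the analogue of Lemma \ref{pre}, and the achievability direction likewise selects $k=\lceil a^{\star}\log n\rceil$ at the maximizer of (\ref{bJ}) and counts exceedances at $p_n^*=2P_0(Y_1\ge k\nu^{\star}\mid K_{1n}=k)$ exactly as in the paper. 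The only cosmetic difference is that the paper needs no separate treatment of the boundary case $g(\nu^{\star},a^{\star})=1$, since the signal count $\epsilon_n n_k^* q_n$ still diverges there with exponent $\delta(\theta)+\tfrac{1}{2}[1-g]\ge\delta(\theta)>0$.
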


Let $\delta(\theta) = |b_J(\theta)-\beta|$.

\subsection{Asymptotically powerless}

We show here that all test statistics are asymptotically powerless when $b_J(\theta) < \beta$.
The LRT statistic
$$L_n = \prod_{i=1}^n L_{in}, \mbox{ where } L_{in} = 1-n^{-\beta}+n^{-\beta} e^{\theta Y_1-K_{in} \psi(\theta)}.
$$
It has the smallest risk over all test statistics,
achieved by concluding $H_0$: $\epsilon_n=0$ when $L_n < 1$ and $H_1$: $\epsilon=n^{-\beta}$ when $L_n \geq 1$.
By Lemma \ref{lem2},
to show that the risk of the LRT statistic tends to 1,
it suffices to show 
\begin{equation} \label{Lnc}
P_1(L_n > c) \rightarrow 0 \mbox{ for all } c>1.
\end{equation}

Consider $\theta>0$ such that $a_0 I(\mu(\theta)) \leq 1$.
For $a>0$ such that $J(a) \leq 1$,
let
\begin{eqnarray} \label{gc}
\mu_a^+ & = & \sup \{ \nu: g(\nu,a) \leq 1 \}, \\ \label{gb}
\mu_a^* & = & \mu_a^+ + \tfrac{\delta(\theta)}{2a \theta}.
\end{eqnarray}
For $a>0$ such that $J(a)>1$,
let $\mu_a^+ = \mu_a^* = -\infty$.

Let $a_{in} = \tfrac{K_{in}}{\log n}$ and let $\mu_{in}^*$ and $\mu_{in}^+$ be short forms of $\mu^*_{a_{in}}$ and $\mu_{a_{in}}^+$.
Let $\ell_n = \sum_{i=1}^n \ell_{in}$,
where
$$\ell_{in} = \left\{ \begin{array}{ll} \log L_{in} & \mbox{ if } Y_i \leq K_{in} \mu_{in}^*, \cr
0 & \mbox{ otherwise.} \end{array} \right.
$$
Let $A_{\theta} = \{ Y_i > K_{in} \mu_{in}^* \mbox{ for some } i \}$.
Lemma \ref{lem7} is proved in Section 4.3.

\begin{lem} \label{lem7}
If $b_J(\theta) < \beta$ then as $n \rightarrow \infty$,

\smallskip \noindent
(a) $P_1(A_{\theta}) \rightarrow 0$.

\smallskip \noindent
(b) $\limsup E_1 \ell_n \leq 0$.

\smallskip \noindent
(c) ${\rm Var}_1 \ell_n \rightarrow 0$.
\end{lem}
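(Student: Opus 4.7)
The plan is to mimic the proof of Lemma \ref{lem3} from the homogeneous case, with the extra ingredient that $K_{in}$ is now random with rate function $J$. Assumption (A3) lets us convert sums of the form $\sum_k e^{h(k/\log n) \log n} P(K_{1n}=k)$ into a Laplace-type integral whose exponent involves $h(a) - J(a)$, so that the constrained maximum defining $b_J(\theta)$ in (\ref{bJtheta}) appears naturally. Lemma \ref{pre} provides the polynomial control on the sample-size tails needed to restrict attention to the bulk range $a_0 \leq a \leq a_1$.

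For part (a), a union bound reduces matters to $n P_1(Y_1 > K_{1n} \mu_{1n}^*)$. Conditioning on $K_{1n}=k$ (so $a = k/\log n$) and applying Cram\'{e}r's theorem exactly as in the proof of Lemma \ref{lem3}(a), one gets $P_0(Y_1 > k\mu_a^*) \leq n^{-aI(\mu_a^*)+o(1)}$ and $P_\theta(Y_1 > k\mu_a^*) \leq n^{-a[I(\mu_a^*)-\theta\mu_a^*+\psi(\theta)]+o(1)}$. Multiplying by $P(K_{1n}=k) \sim n^{-J(a)}$ from (A3) and summing, the $P_0$-piece has overall exponent $1 - g(\mu_a^*,a)$ and the $P_\theta$-piece has exponent $1 - \beta - [g(\mu_a^*,a) - a(\theta\mu_a^* - \psi(\theta))]$. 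The choice $\mu_a^* = \mu_a^+ + \delta(\theta)/(2a\theta)$ makes $g(\mu_a^*,a) > 1$ by a fixed positive amount; a heterogeneous analog of Lemma \ref{lem4} (comparing $b_J(\theta)$ to the linear form $a[\theta\mu_a^+ - \psi(\theta)]$) handles the second piece; and Lemma \ref{pre} disposes of the range $a \geq a_1$.

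For parts (b) and (c), we bound $E_1 \ell_{1n}$ and $E_1 \ell_{1n}^2$ by $n^{-\beta}$ times truncated expectations of $(e^{\theta Y_1 - K_{1n}\psi(\theta)} - 1)$ and its square, using $\log(1+x) \leq x$ and $|\log(1+x)|^2 \leq 4x^2$ (the latter valid since $L_{1n} \geq 1 - n^{-\beta} > \tfrac{1}{2}$ for $n$ large). Conditioning on $K_{1n}=k$, an exponential change of measure gives $E_\theta[e^{\theta Y_1 - k\psi(\theta)} {\bf 1}_{\{Y_1 \leq k \mu_a^*\}}] = e^{k[\psi(2\theta) - 2\psi(\theta)]} P_{2\theta}(Y_1 \leq k\mu_a^*)$, and similarly with $3\theta$ in place of $2\theta$ for the square. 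We then split on $a$: if $\mu_a^* \geq \mu(2\theta)$ the probability factor is $\leq 1$ and we inherit the clean exponential bound; if $\mu_a^* < \mu(2\theta)$ we perform a further tilt to the intermediate parameter $\theta^*$ from the Lagrange characterization (\ref{J2a})--(\ref{J3}), paralleling the second bound in (\ref{E2}). Summing over $k$ against $P(K_{1n}=k)$ produces a Laplace exponent matching the constrained maximum in (\ref{bJtheta}), so $\beta > b_J(\theta)$ forces $n E_1 \ell_{1n} \rightarrow 0$ and $n E_1 \ell_{1n}^2 \rightarrow 0$.

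The main obstacle is the case split on $a$: in the second case the truncation $\{Y_1 \leq K_{1n} \mu_{1n}^*\}$ is essential, and the Lagrange conditions (\ref{J2a})--(\ref{J3}) must be invoked to align the tilted bound with $f_\theta(\mu(\theta^*), a_\theta^*)$. A secondary technical point for (c) is a heterogeneous analog of Lemma \ref{lem6}: one must verify that the Laplace exponent arising from the third-moment bound -- namely the Legendre conjugate of $J$ evaluated at $\psi(3\theta) - 3\psi(\theta)$ -- stays below $3\beta - 1$ whenever $\beta > b_J(\theta)$, which again follows from convexity of $\psi$ together with the admissibility constraint $a_0 I(\mu(\theta)) \leq 1$.
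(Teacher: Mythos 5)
Your overall architecture matches the paper's: truncate $\log L_{1n}$ at the level $K_{1n}\mu_{1n}^*$, bound $E_1\ell_{1n}$ and $E_1\ell_{1n}^2$ via $\log(1+x)\le x$ and $|\log(1+x)|^2\le 4x^2$, condition on $K_{1n}=k$, apply an exponential change of measure, and sum against $P(K_{1n}=k)$ using Lemma \ref{pre} to confine attention to $k\le a_1\log n$; part (a) is likewise handled exactly as in the paper (with the paper using $\mu_{1n}^+$ rather than $\mu_{1n}^*$ in the $P_\theta$ piece, an inessential variant, and the key display being $aI(\mu_a^*)\ge 1-J(a)+\delta(\theta)/2$ from convexity of $I$ and the definition of $\mu_a^*$).

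The one step that would fail as written is your choice of tilt in the constrained regime of parts (b)--(c). You propose tilting to ``the intermediate parameter $\theta^*$ from the Lagrange characterization (\ref{J2a})--(\ref{J3}).'' That characterization pins down a \emph{single} pair $(\theta^*,a_\theta^*)$, namely the global maximizer of the constrained problem (\ref{bJtheta}). But the sums in (b) and (c) range over all $k\le a_1\log n$, i.e.\ over a continuum of values $a=k/\log n$, and for each such $a$ the truncation level is the $a$-dependent $\mu_a^*=\mu_a^+ +\delta(\theta)/(2a\theta)$. A fixed tilt to $\theta^*$ neither matches this truncation nor yields a bound comparable to $b_J(\theta)$ for $a\ne a_\theta^*$, since $(\mu(\theta^*),a)$ need not be feasible for other $a$. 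What is needed (and what the paper does) is the $a$-dependent boundary tilt $\theta_a^+$ defined by $\psi'(\theta_a^+)=\mu_a^+$, i.e.\ $g(\mu(\theta_a^+),a)=1$, combined with the per-$a$ inequality $b_J(\theta)\ge f_\theta(\mu_a^+,a)$ that follows directly from the definition of $b_J$ as a constrained maximum (see (\ref{bJ2}) and the bound (\ref{mk2})); the analogue of (\ref{E2}) is then carried out separately for each $k$. Relatedly, your case split should be on feasibility of $(\mu(2\theta),a)$, i.e.\ on $g(\mu(2\theta),a)\le 1$ (equivalently $2\theta\le\theta_a^+$), not on whether $\mu_a^*\ge\mu(2\theta)$: in the sliver $\mu_a^+<\mu(2\theta)\le\mu_a^*$ the untruncated bound $e^{k[\psi(2\theta)-2\psi(\theta)]}$ corresponds to $f_\theta(\mu(2\theta),a)$ at an infeasible point, which may exceed $b_J(\theta)$. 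Your sketch of the third-moment control in (c) is in the right spirit, but the precise statement needed is the per-$a$ inequality of Lemma \ref{lemX}, whose proof uses $aI(\mu(3\theta))\le aI(\mu_a^+)=1-J(a)$ in the regime $3\theta\le\theta_a^+$, rather than the admissibility condition $a_0I(\mu(\theta))\le 1$ you cite.
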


\begin{proof}[Proof of Theorem 2 for $b_J(\theta) < \beta$]
Under $A_{\theta}^c$, 
$\log L_n = \ell_n$.
By Chebyshev's inequality,
for $c>1$,
$$P_1(L_n > c) \leq P_1(A_{\theta})+\tfrac{{\rm Var}_1 \ell_n}{(\log c-E_1 \ell_n)^2}. 
$$
By Lemma \ref{lem7}, 
(\ref{Lnc}) holds.
Hence by Lemma \ref{lem2}, 
risk of $L_n$ goes to 1,
and all test statistics are asymptotically powerless.
\end{proof}

\subsection{Asymptotically powerful}

We show here that ${\rm HC}_n^{\rm thres}$ is asymptotically powerful when $b_J(\theta) > \beta$.
By Lemma \ref{lem1}, 
it suffices to show 
\begin{equation} \label{show2.2} 
P_1({\rm HC}_n^{\rm thres} \geq \log n) = P_1(\max_{1 \leq k \leq n} {\rm HC}_{kn} \geq \log n) \rightarrow 1.
\end{equation}

\begin{proof}[Proof of Theorem 2 for $b_J(\theta) > \beta$]
By (\ref{bJtheta}),
there exists $(\nu,a)$ such that
\begin{eqnarray} \label{bJ}
b_J(\theta) & = & a[\theta \nu-\psi(\theta)]+\tfrac{1}{2}[1-g(\nu,a)], \\ \label{bf}
\mbox{with } g(\nu,a) & = & a I(\nu) + J(a) \leq 1.
\end{eqnarray}
Consider $k = \lceil a \log n \rceil$ and let
\begin{eqnarray*}
p_n^* & = & 2P_0(Y_1 \geq k \nu|K_{1n}=k), \cr
q_n & = & P_{\theta}(Y_1 \geq k \nu|K_{1n}=k).
\end{eqnarray*}
Let $n_k = \# \{ i: K_{in} \geq k \}$ and $n_k^* = \{ i: K_{in}=k \}$.
By (\ref{show2.2}), 
it suffices to show that
\begin{eqnarray} \label{P1HC}
P_1({\rm HC}_{kn}(p_n^*) \geq \log n) & \rightarrow & 1, \\ \label{maxi}
\mbox{where } {\rm HC}_{kn}(p) & = & \tfrac{\# \{ i: K_{in} \geq k, p_i \leq p \}-n_k p}{\sqrt{n_k p(1-p)}}.
\end{eqnarray}

By Chebyshev's inequality,
under $P_1$,
\begin{equation} \label{ch2}
\# \{ i: K_{in} \geq k, p_i \leq p_n^* \} \geq n_k p_n^* +O_p(\sqrt{n_k p_n^*}) + [1+o_p(1)] \epsilon_n n_k^* q_n,
\end{equation}
with $\epsilon_n = n^{-\beta}$,
provided $\epsilon_n n_k^* q_n \stackrel{p}{\rightarrow} \infty$.
By (\ref{maxi}) and (\ref{ch2}),
\begin{eqnarray} \label{HCstar}
{\rm HC}_{kn}(p_n^*) \geq  [1+o_p(1)] \tfrac{\epsilon_n n_k^* q_n}{n_k p_n^*} +O_p(1).
\end{eqnarray}
By (A3) and Lemma \ref{pre},
\begin{eqnarray} \label{n1}
n_k & = & n^{1-J(a)+o_p(1)},  \\ \label{n2}
n_k^* & = & n^{1-J(a)+o_p(1)}, \\ \label{p1}
p_n^* & = & n^{-a I(\nu) + o_p(1)}, \\ \label{q1}
q_n & = & n^{-a[I(\nu)-\theta \nu + \psi(\theta)] +o(1)}.
\end{eqnarray}

By (\ref{bJ}) and (\ref{n1})--(\ref{q1}),
\begin{eqnarray*}
\tfrac{\epsilon_n n_k^* q_n}{n_k p_n^*} & = & n^{\frac{1}{2}-\beta-\frac{1}{2} J(a)-\frac{1}{2} a
[I(\nu)-2 \theta \nu+2 \psi(\theta)]+o_p(1)} \cr
& = & n^{\delta(\theta)+o_p(1)}, \cr
\epsilon_n n_k^* q_n & = & n^{1-\beta-J(a)-a[I(\nu)-\theta \nu+\psi(\theta)]+o_p(1)} \cr
& =& n^{1-\beta-g(a,\nu)+a[\theta \nu-\psi(\theta)]+o_p(1)} \cr
& = & n^{\delta(\theta)+\frac{1}{2}[1-g(a,\nu)]+o_p(1)} \stackrel{p}{\rightarrow} \infty,
\end{eqnarray*}
and (\ref{P1HC}) follows from (\ref{HCstar}).
\end{proof}

\subsection{Proofs of Lemmas \ref{pre} and \ref{lem7}}

\begin{proof}[Proof of Lemma \ref{pre}]
Let $\lambda_n = a_0 \log n$ and $\omega>0$.
Since $J(a)=0$ for $a \leq a_0$,
\begin{equation} \label{p61}
\sum_{k \leq \lambda_n} e^{J(\frac{k}{\log n})} P(K_{1n}=k) = P(K_{1n} \leq \lambda_n) \leq 1.
\end{equation}
By (A3),
\begin{eqnarray} \label{p62}
\sum_{\lambda_n < k \leq a_1 \log n} e^{J(\frac{k}{\log n})} P(K_{1n}=k) 
& = & \sum_{\lambda_n < k \leq a_1 \log n} \alpha_{kn} \\ \nonumber
& \leq & (a_1 \log n) \exp \Big( \lambda_n \sup_{k \geq \lambda_n} \tfrac{|\log \alpha_{kn}|}{k} \Big) \\ \nonumber
& = & o(n^{\omega}).
\end{eqnarray}
Since $J'$ is increasing, $J(a) \geq 1+(a-a_1) J'(a_1)$ for $a \geq a_1$ and
by (A3),
\begin{eqnarray} \label{p63}
n \sum_{k > a_1 \log n} P(K_{1n}=k) & \leq & \sum_{k \geq a_1 \log n} \alpha_{kn} e^{-(\frac{k}{\log n}-a_1) J'(a_1)} \\ \nonumber
& \leq & (1+\tfrac{\log n}{J'(a_1)}) \exp \Big( \lambda_n \sup_{k \geq \lambda_n} \tfrac{|\log \alpha_{kn}|}{k} \Big) \\ \nonumber
& = & o(n^{\omega}).
\end{eqnarray}
Lemma \ref{pre} follows from (\ref{p61})--(\ref{p63}).
\end{proof}

\begin{proof}[Proof of Lemma \ref{lem7}(a)]
Since $\mu_{in}^+ \leq \mu_{in}^*$,
\begin{equation} \label{7a1}
P_1(A_{\theta}) \leq n P_0(Y_1 > K_{1n} \mu_{1n}^*) + n^{1-\beta} P_{\theta}(Y_1 > K_{1n} \mu_{1n}^+).
\end{equation}
By (\ref{gc}), (\ref{gb}) and (\ref{bf}), 
\begin{eqnarray} \label{conclude}
a I(\mu_a^*) & \geq & a I(\mu_a^+) + a \theta (\mu_a^*-\mu_a^+) \\ \nonumber
& \geq & 1-J(a)+\tfrac{\delta(\theta)}{2},
\end{eqnarray}
if $g(\mu_a^+,a)=1$,
and $I(\mu_a^*)=\infty$ if $g(\mu_a^+,a)<1$.
By Lemma \ref{pre}, (\ref{conclude}) and Cram\'{e}r's Theorem,
\begin{eqnarray} \label{7a2}
& & n P_0(Y_1 > K_{1n} \mu_{1n}^*) \\ \nonumber
& = & n E(e^{-K_{1n} I(\mu_{1n}^*)} {\bf 1}_{\{ K_{1n} \leq a_1\log n \}}) + n P(K_{1n} > a_1 \log n) \\ \nonumber
& \leq & n^{-\frac{\delta(\theta)}{2}} \Big[ \sum_{k \leq a_1 \log n} e^{J(\frac{k}{\log n})} P(K_{1n}=k) 
+ n P(K_{1n} > a_1 \log n) \Big] \\ \nonumber
& \rightarrow & 0.
\end{eqnarray}
By (\ref{gc}) and (\ref{bf}),
for $a=\tfrac{k}{\log n}$,
\begin{eqnarray} \label{7a3}
& & n^{1-\beta} P_{\theta}(Y_1 > K_{1n} \mu_{1n}^+) \\ \nonumber
& = & n^{1-\beta} [E(e^{-K_{1n}[I(\mu_{1n}^+)-\theta \mu_{1n}^+ + \psi(\theta)]} {\bf 1}_{\{ K_{1n} \leq a_1 \log n \}})
+ P(K_{1n} > a_1 \log n)] \\ \nonumber
& \leq & n^{1-\beta} [E_{\theta}(e^{-K_{1n} I(\mu_a^+)})+P(K_{1n} > a_1 \log n)] \\ \nonumber
& \leq & n^{1-\beta} \Big[ \sum_{k \leq a_1 \log n} n^{a[\theta \mu_{1n}^+-\psi(\theta)]+J(\frac{k}{\log n})-1}
P(K_{1n}=k) + P(K_{1n} > a_1 \log n) \Big] \\ \nonumber
& \leq & n^{-\beta + b_J(\theta)} \Big[ \sum_{k \leq a_1 \log n} e^{J(\frac{k}{\log n})} P(K_{1n}=k) 
+ n P(K_{1n} > a_1 \log n) \Big] \\ \nonumber
& \rightarrow & 0,
\end{eqnarray}
with $b_J(\theta) \geq a[\theta \mu_a^+-\psi(\theta)]$ following from (\ref{gc}) and (\ref{bJ}).

We conclude Lemma \ref{lem7}(a) from (\ref{7a1}), (\ref{7a2}) and (\ref{7a3}).
\end{proof}

\begin{proof}[Proof of Lemma \ref{lem7}(b)]
Since $E_0(e^{\theta Y_1-k \psi(\theta)}|K_{1n}=k) = 0$ for $k \geq 1$ and $\log(1+x) \leq x$,
\begin{equation} \label{mk0}
E_1 \ell_{1n} \leq n^{-2 \beta} \sum_{k \leq a_1 \log n} m_k P(K_{1n}=k), 
\end{equation}
where $m_k = E_{\theta}(e^{\theta Y_1-k \psi(\theta)} {\bf 1}_{\{ Y_1 \leq k \mu_{1n}^* \}}|K_{1n}=k)$.

For $a \leq a_1$ such that $g(\mu(2 \theta),a) \leq 1$,
\begin{eqnarray} \label{bJ1}
b_J(\theta) & \geq & f_{\theta}(\mu(2 \theta),a) \\ \nonumber
& = & a[\theta \mu(2 \theta)-\psi(\theta)]+\tfrac{1}{2} \{ 1-a[2 \theta \mu(2 \theta)-\psi(2 \theta)] - J(a) \} \\ \nonumber
& = & \tfrac{1}{2} \{ a[\psi(2 \theta)-2 \psi(\theta)]+1-J(a) \}.
\end{eqnarray}
Hence for $a=\tfrac{k}{\log n}$,
\begin{eqnarray} \label{mk1}
m_k & \leq & E_0(e^{2 \theta Y_1 - 2k \psi(\theta)}|K_{1n}=k) \\ \nonumber
& = & n^{a[\psi(2 \theta)-2 \psi(\theta)]} \\ \nonumber
& \leq & n^{2 b_J(\theta,a)-1+J(a)}.
\end{eqnarray}

Let $\theta_a^+$ be such that $\psi'(\theta_a^+) = \mu_a^+$.
For $a \leq a_1$ such that $g(\mu(2 \theta),a) > 1$,
$\theta_a^+ < 2 \theta$.
Hence by (\ref{gc}),
\begin{eqnarray} \label{bJ2}
b_J(\theta) & \geq & f_{\theta}(\mu_a^+,a) \\ \nonumber
& = & a[\theta \mu_a^+-\psi(\theta)]+\tfrac{1}{2}[1-g(\mu_a^+,a)] \\ \nonumber
& = & a[\theta \mu_a^+-\psi(\theta)]+\tfrac{1}{2} \{ 1-a[\theta_a^+ \mu_a^+-\psi(\theta_a^+)]-J(a) \},
\end{eqnarray}
and
\begin{eqnarray} \label{mk2}
m_k & = & E_{\theta_a^+}(e^{(2 \theta-\theta_a^+) Y_1 - 2k \psi(\theta)+k \psi(\theta_a^+)}
{\bf 1}_{\{ Y_1 \leq k \mu_{1n}^* \}}|K_{1n}=k) \\ \nonumber
& \leq & n^{a[(2 \theta-\theta_a^+) \mu_{1n}^*-2 \psi(\theta)+2 \psi(\theta_a^+)]} \\ \nonumber
& \leq & n^{\delta(\theta)+2b_J(\theta)-1+J(a)}.
\end{eqnarray}

Since $b_J(\theta)=\beta-\delta(\theta)$,
by Lemma \ref{pre} and (\ref{mk0})--(\ref{mk2}),
\begin{eqnarray} \label{4.22a}
E_1 \ell_n & = & n E_1 \ell_{1n} \\ \nonumber
& \leq & n^{-\delta(\theta)}
\sum_{k \leq a_1 \log n} e^{J(\frac{k}{\log n})} P(K_{1n}=k) \rightarrow 0,
\end{eqnarray}
and Lemma \ref{lem7}(b) holds.
\end{proof}

\begin{proof}[Proof of Lemma \ref{lem7}(c)]
Since $|\log(1+x)|^2 \leq 4x^2$ for $x \geq -\tfrac{1}{2}$,
\begin{eqnarray} \label{v1}
{\rm Var}_1 \ell_n & = & n {\rm Var}_1 \ell_{1n} \\ \nonumber
& \leq & n E_1 \ell_{1n}^2 \\ \nonumber
& \leq & 4n^{1-2 \beta} \sum_{k \leq a_1 \log n} m_k P(K_{1n}=k) + 4n^{1-3 \beta} \sum_{k \leq a_1 \log n} \zeta_k P(K_{1n}=k), 
\end{eqnarray} 
where 
\begin{eqnarray*}
m_k & = & E_0(e^{2 \theta Y_1-2k \psi(\theta)} {\bf 1}_{\{ Y_1 \leq k \mu_{1n}^* \}}|K_{1n}=k), \cr
& = & E_{\theta} (e^{\theta Y_1-k \psi(\theta)} {\bf 1}_{\{ Y_1 \leq k \mu_{1n}^* \}}|K_{1n}=k), \cr
\zeta_k & = & E_{\theta} (e^{2 \theta Y_1-2 k \psi(\theta)} {\bf 1}_{\{ Y_1 \leq k \mu_{1n}^* \}}|K_{1n}=k).
\end{eqnarray*}
It was shown in the proof of Lemma \ref{lem7}(b),
see (\ref{mk0}) and (\ref{4.22a}),
that
\begin{equation} \label{v2}
n^{1-2 \beta} \sum_{k \leq a_1 \log n} m_k P(K_{1n}=k) \rightarrow 0.
\end{equation}

Let $a=\tfrac{k}{\log n}$.
For $a \leq a_1$ such that $\theta \leq \tfrac{\theta_a^+}{3}$,
apply the inequality
\begin{equation} \label{v3}
\zeta_k \leq E_0(e^{3 \theta Y_1-3k \psi(\theta)}|K_{1n}=k) = e^{a[\psi(3 \theta)-3 \psi(\theta)]}.
\end{equation}
For $a \leq a_1$ such that $\theta > \tfrac{\theta_a^+}{3}$,
by (\ref{bJ2}),
\begin{eqnarray} \label{v4}
\zeta_k & = & E_{\theta_a^+}(e^{(3 \theta-\theta_a^+) Y_1-3k \psi(\theta) + k \psi(\theta_a^+)}
{\bf 1}_{\{ Y_1 \leq k \mu_{1n}^* \}}|K_{1n}=k) \\ \nonumber
& \leq & n^{a[(3 \theta-\theta_a^+) \mu_{1n}^*-3 \psi(\theta)+\psi(\theta_a^+)]} \\ \nonumber
& \leq & n^{3 b_J(\theta)+\frac{3 \delta(\theta)}{2}-1+J(a)-\frac{1}{2}[1-g(\mu_a^+,a)]} \\ \nonumber
& = & n^{3 b_J(\theta)+\frac{3 \delta(\theta)}{2}-1+J(a)}.
\end{eqnarray}

It follows from (\ref{bJ1}) and Lemma \ref{lemX} below that for $0 < \theta \leq \tfrac{\theta_a^+}{3}$,
\begin{eqnarray*}
a[\psi(3 \theta)-3 \psi(\theta)] & \leq & -1+3 f_{\theta}(\mu(2 \theta),a) + J(a) \cr
& \leq & -1+3b_J(\theta)+J(a).
\end{eqnarray*}

Since $b_J(\theta)=\beta-\delta(\theta)$,
by Lemma \ref{pre}, (\ref{v3}) and (\ref{v4}),
\begin{eqnarray*}
& & n^{1-3 \beta} \sum_{k \leq a_1 \log n} \zeta_k P(K_{1n}=k) \cr
& \leq & n^{-\frac{3 \delta(\theta)}{2}}
\sum_{k \leq a_1 \log n} e^{J(\frac{k}{\log n})} P(K_{1n}=k) \rightarrow 0,
\end{eqnarray*}
and ${\rm Var}_1 \ell_n \rightarrow 0$ follows from (\ref{v1}) and (\ref{v2}). 
\end{proof}

\begin{lem} \label{lemX}
For $0 < \theta \leq \tfrac{\theta_a^+}{3}$,
$$2\{ 1+a[\psi(3 \theta)-3 \psi(\theta)] \} \leq 3(1+a[\psi(2 \theta)-2 \psi(\theta)])-J(a).
$$
\end{lem}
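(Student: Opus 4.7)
The plan is to rearrange the inequality to isolate $J(a)$ and then reduce it to a one-dimensional convexity estimate. Expanding both sides and cancelling the common $6a\psi(\theta)$ terms, the stated inequality is equivalent to
$$a\bigl[2\psi(3\theta) - 3\psi(2\theta)\bigr] \;\leq\; 1 - J(a).$$
Since Lemma \ref{lemX} is invoked in the proof of Lemma \ref{lem7}(c) only for $a = k/\log n \leq a_1$, where $J(a_1)=1$, the right-hand side is nonnegative. From the definition (\ref{gc}) of $\mu_a^+$ we have $g(\mu_a^+,a) = a I(\mu_a^+) + J(a) \leq 1$, so $a I(\mu_a^+) \leq 1 - J(a)$, and by Legendre duality $I(\mu_a^+) = \theta_a^+\mu_a^+ - \psi(\theta_a^+)$. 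It therefore suffices to prove that for $0 < \theta \leq \theta_a^+/3$,
$$2\psi(3\theta) - 3\psi(2\theta) \;\leq\; \theta_a^+\mu_a^+ - \psi(\theta_a^+).$$

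The next step is a monotonicity observation. Setting $F(\theta) := 2\psi(3\theta) - 3\psi(2\theta)$, we have $F'(\theta) = 6[\mu(3\theta) - \mu(2\theta)] \geq 0$ for $\theta \geq 0$, since $\mu = \psi'$ is non-decreasing by convexity of $\psi$. Hence $F$ attains its maximum on $[0,\theta_a^+/3]$ at the right endpoint, and the task reduces to the single inequality
$$2\psi(\theta_a^+) - 3\psi\bigl(\tfrac{2\theta_a^+}{3}\bigr) \;\leq\; \theta_a^+\mu_a^+ - \psi(\theta_a^+),$$
or equivalently $\psi(\theta_a^+) - \psi(2\theta_a^+/3) \leq \theta_a^+\mu_a^+/3$. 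This is immediate from convexity of $\psi$: for a convex differentiable function one has $\psi(b) - \psi(a) \leq \psi'(b)(b-a)$ whenever $a<b$, and taking $a = 2\theta_a^+/3$, $b = \theta_a^+$, $\psi'(b) = \mu_a^+$ yields exactly the required bound.

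The main thing to watch is the boundary cases in which $\theta_a^+$ or $\mu_a^+$ may be infinite, as happens for bounded or discrete $F_0$. In such cases either the constraint $\theta \leq \theta_a^+/3$ becomes vacuous or $I(\mu_a^+) = \infty$, so the target bound is trivially true; approximating $\mu_a^+$ from below by interior values in the definition (\ref{gc}) handles any remaining subtlety. Beyond this I do not foresee any real obstacle: the whole argument rests only on convexity of $\psi$, monotonicity of $\mu$, and the defining relation $aI(\mu_a^+) + J(a) \leq 1$, so no step is more than a one-line calculation.
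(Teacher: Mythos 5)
Your proof is correct and rests on the same ingredients as the paper's: cancelling the $6a\psi(\theta)$ terms, invoking $aI(\mu_a^+)+J(a)\leq 1$, and a tangent-line (convexity) bound on $\psi$. The only difference is cosmetic — the paper applies the tangent inequality at $3\theta$ to get the pointwise bound $2\psi(3\theta)-3\psi(2\theta)\leq I(\mu(3\theta))$ and then uses monotonicity of $I\circ\mu$, whereas you first push $\theta$ to the endpoint $\theta_a^+/3$ via monotonicity of $F$ and apply the tangent inequality at $\theta_a^+$; the two routes are equivalent.
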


\begin{proof}
Since $\mu(\cdot)=\psi'(\cdot)$ is increasing,
\begin{eqnarray*}
& & 3(1+a[\psi(2 \theta)-2 \psi(\theta)])-2-2a[\psi(3 \theta)-3 \psi(\theta)] \cr
& = & 1+3a \psi(2 \theta)-2a \psi(3 \theta) \cr
& = & 1+3a[\psi(2 \theta)-\psi(3 \theta)]+a \psi(3 \theta) \cr
& \geq & 1-3a \theta \psi'(3 \theta)+a \psi(3 \theta) \cr
& = & 1-a[3 \theta \mu(3 \theta)-\psi(3 \theta)] \cr
& = & 1-a I(\mu(3 \theta)) \cr
& \geq & 1-a I(\mu_a^+) \cr
& = & J(a),
\end{eqnarray*}
and Lemma \ref{lemX} holds.
\end{proof}

\section{Proof of Theorem \ref{thm3}}

Define 
\begin{eqnarray} \label{bHa}
b_H(\theta) & = & \max_{(\nu,a): a I(\nu) \leq 1} f_{H \theta}(\nu,a), \\ \label{fH}
\mbox{where } f_{H \theta}(\nu,a) &  =& a[\theta \nu-\psi(\theta)]+\tfrac{1}{2}[1-g_H(\nu,a)], \\ \label{gH}
g_H(\nu,a) & = & a I(\nu)+2 J(a).
\end{eqnarray}

\begin{thm} \label{thm3}
Consider the sparse mixture problem in Theorem \ref{thm2}.
The test statistic HC$_n$ is asymptotically powerless when $\beta > b_H(\theta)$
and asymptotically powerful when $\beta < b_H(\theta)$.
\end{thm}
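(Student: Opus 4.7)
The proof parallels Theorem~\ref{thm2}: a powerful direction exhibiting one threshold at which $\mathrm{HC}_n$ is large, and a powerless direction that bounds $\mathrm{HC}_n$ directly (not via an LRT, since $\mathrm{HC}_n$ is suboptimal relative to $\mathrm{HC}_n^{\mathrm{thres}}$). The structural difference from Theorem~\ref{thm2} is the absence of sample-size thresholding: the effective noise scale becomes $\sqrt{np_n}$ rather than $\sqrt{n_k p_n}$. This relaxes the feasibility constraint from $g(\nu,a)\leq 1$ to $aI(\nu)\leq 1$ (we only need $np_n\gtrsim 1$, not $n_k p_n\gtrsim 1$), and doubles the coefficient of $J(a)$ in the objective (selection of false nulls with $K_{in}=k$ contributes $n^{-J(a)/2}$ to the signal-to-noise ratio and dilution of the noise reference by all $n$ p-values contributes another $n^{-J(a)/2}$).

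\textbf{Powerful direction ($\beta<b_H(\theta)$).} Let $\delta = b_H(\theta)-\beta$, pick $(\nu^*,a^*)$ attaining $b_H(\theta)$, set $\sigma^* = a^*I(\nu^*)\leq 1$, $k^* = \lceil a^*\log n\rceil$ and $p_n^* = 2P_0(Y_1\geq k^*\nu^* \mid K_{1n}=k^*)$. Monotonicity of $t(1-t)$ on $(0,\tfrac12]$ gives
\[ \mathrm{HC}_n \;\geq\; \frac{N_n(p_n^*) - np_n^*}{\sqrt{np_n^*(1-p_n^*)}},\qquad N_n(t) := \#\{i: p_i\leq t\}. \]
Cram\'er's theorem gives $p_n^* = n^{-\sigma^*+o(1)}$. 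Under $H_1$, $E_1[N_n(p_n^*)-np_n^*] = n\epsilon_n[Q(p_n^*)-p_n^*]$ with $Q(t) := \sum_k P(K_{1n}=k) P_\theta(p_1\leq t\mid K_{1n}=k)$. Laplace's principle applied to the sum over $k$ (with Lemma~\ref{pre} suppressing $k>a_1\log n$) yields $n\epsilon_n Q(p_n^*) = n^{(1-\sigma^*)/2 + b_H(\theta) - \beta + o(1)}$. Dividing by $\sqrt{np_n^*} = n^{(1-\sigma^*)/2+o(1)}$ gives $n^{\delta+o(1)}$, and Chebyshev's inequality (using $\mathrm{Var}_1 N_n \leq E_1 N_n$) promotes this to $n^{\delta+o_P(1)}>\log n$. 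Combined with Lemma~\ref{lem1}, $\mathrm{HC}_n$ is asymptotically powerful.

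\textbf{Powerless direction ($\beta>b_H(\theta)$).} Let $\delta=\beta-b_H(\theta)$. Using $\mathrm{HC}_n \leq \|Z_n^+\|_0^{1/2}$, decompose the empirical process as $U_n(t) = [U_n(t)-E_1U_n(t)]+E_1U_n(t)$ under $H_1$. A coupling writing each $p_i$ as $F_i^{-1}(U_i)$ for a shared uniform $U_i$ reduces the centered part to a uniform empirical process, inheriting $O_P(\sqrt{\log\log n})$ from Cs\'{a}ki's bound (cf.\ Lemma~\ref{lem1}). For the drift, discretize $t_j = n^{-\sigma_j}$ on a grid $\sigma_j=j\eta$ with $\eta<\delta/2$ and apply Laplace's principle plus Lemma~\ref{pre} to obtain
\[ \frac{\sqrt{n}\,\epsilon_n\,Q(t_j)}{\sqrt{t_j(1-t_j)}} \;=\; n^{f^*_{H\theta}(\sigma_j)-\beta+o(1)},\qquad f^*_{H\theta}(\sigma):=\sup\{f_{H\theta}(\nu,a):aI(\nu)=\sigma\}. \]
For $\sigma_j\leq 1$ we have $f^*_{H\theta}(\sigma_j)\leq b_H(\theta)=\beta-\delta$, so the ratio is $n^{-\delta+o(1)}\to 0$. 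For $\sigma_j>1$ (so $t_j<1/n$), a Markov bound using $E_1N_n(t_j)\to 0$ shows $N_n(t_j)=0$ w.h.p. Monotonicity of $Q$ and of $t(1-t)$ on $(0,\tfrac12]$ extends the bounds to every $t$ in each cell $[t_{j+1},t_j]$. Hence $\mathrm{HC}_n$ has the same asymptotic tail under $H_1$ as under $H_0$, so $\mathrm{Risk}(\mathrm{HC}_n)\to 1$.

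\textbf{Main obstacle.} The hardest step is the uniform control of the drift across $t\in(0,\tfrac12]$ in the powerless direction. Three subtleties: (i) the Laplace reduction of $Q(t_j)$ requires tracking polynomial prefactors through Lemma~\ref{pre} and confirming the sum over $k$ is dominated by a single optimizing $a$; (ii) the sub-Bonferroni regime $\sigma>1$ needs a dedicated Markov argument since $nt<1$ invalidates the Chebyshev-type second-moment bound, and one must verify $E_1N_n(t)\to 0$ by extending the Laplace computation beyond the feasibility constraint $aI(\nu)\leq 1$; (iii) the coupling underlying the centered-process bound requires care when $F_0$ is discrete and the paper's p-value randomization is in force.
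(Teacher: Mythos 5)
Your proposal is correct and follows essentially the same route as the paper: the powerful direction is the same single-threshold Chebyshev argument at the optimizer of $f_{H\theta}$ over $aI(\nu)\leq 1$, and the powerless direction uses the same ingredients (the Jaeschke extreme-value limit under $H_0$, a coupling isolating the false-null contribution, a polynomial discretization of $t$, and Cram\'er/Laplace bounds showing the contribution at scale $t=n^{-\sigma}$ is $n^{f_{H\theta}^*(\sigma)-\beta+o(1)}\leq n^{-\delta+o(1)}$, with a separate degenerate argument for $t<n^{-1}$). The only cosmetic difference is that you center the full empirical process and bound the drift $E_1U_n(t)$, whereas the paper bounds the random count $C_n(p)$ of false-null p-values directly; these are equivalent up to bookkeeping.
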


Let $\delta(\theta)=|b_H(\theta)-\beta|$.

\subsection{Asymptotically powerful}

We show here that HC$_n$ is asymptotically powerful when $b_H(\theta) > \beta$.

\begin{proof}[Proof of Theorem 3 for $b_H(\theta) > \beta$]
By (\ref{bHa}),
there exists $(a,\nu)$ such that
\begin{equation} \label{bH}
a I(\nu) \leq 1, \qquad b_H(\theta) = f_{H \theta}(\nu,a).
\end{equation}

Consider $k = \lceil a \log n \rceil$ and let 
\begin{eqnarray*}
p_n^* & = & 2 P_0(Y_1 \geq k \nu|K_{1n}=k), \cr
q_n & = & P_{\theta}(Y_1 \geq k \nu|K_{1n}=k).
\end{eqnarray*}
Let $n_k^*= \# \{ i: K_{in}=k \}$.
By Lemma \ref{lem1}, it suffices to show that
\begin{eqnarray} \label{4.1}
P_1({\rm HC}_n(p_n^*) \geq \log n) & \rightarrow & 1, \\ \label{4.2}
\mbox{where }{\rm HC}_n(p) & =& \tfrac{\# \{ i: p_i \leq p \}-np}{\sqrt{n p(1-p)}}.
\end{eqnarray}

By Chebyshev's inequality,
under $P_1$,
\begin{equation} \label{4.3}
\# \{ i: p_i \leq p_n^* \} \geq n p_n^* + O_p(\sqrt{n p_n^*})+[1+o_p(1)] \epsilon_n n_k^* q_n,
\end{equation}
with $\epsilon_n = n^{-\beta}$,
provided $\epsilon_n n_k^* q_n \stackrel{p}{\rightarrow} \infty$.
By (\ref{4.2}) and (\ref{4.3}),
\begin{equation} \label{4.4}
{\rm HC}_n(p_n^*) \geq [1+o_p(1)] \tfrac{\epsilon_n n_k^* q_n}{\sqrt{np_n^*}} + O_p(1).
\end{equation}

By (A3), Lemma \ref{pre} and Cram\'{e}r's Theorem,
\begin{eqnarray} \label{4.5}
n_k^* & = & n^{1-J(a)+o_p(1)}, \\ \label{4.6}
p_n^* & = & n^{-aI(\nu)+o(1)}, \\ \label{4.7}
q_n & = & n^{-a[I(\nu)-\theta \nu+\psi(\theta)]+o(1)}.
\end{eqnarray}

By (\ref{bH}) and (\ref{4.5})--(\ref{4.7}),
\begin{eqnarray*} 
\tfrac{\epsilon_n n_k^* q_n}{\sqrt{n p_n^*}} & = & n^{-\beta+\frac{1}{2}-J(a)-\frac{1}{2}a[I(\nu)-2 \theta \nu+2 \psi(\theta)]
+o_p(1)} \cr
& = & n^{b_H(\theta)-\beta+o_p(1)} = n^{\delta(\theta)+o_p(1)}, \cr
\epsilon_n n_k^* q_n & = & n^{-\beta+1-J(a)-a[I(\nu)-\theta \nu+\psi(\theta)]+o_p(1)} \cr
& = & n^{-\beta+b_H(\theta)+\frac{1}{2}[1-aI(\nu)]+o_p(1)} \cr
& = & n^{\delta(\theta)+\frac{1}{2}[1-aI(\nu)]+o_p(1)} \stackrel{p}{\rightarrow} \infty,
\end{eqnarray*}
and (\ref{4.1}) follows from (\ref{4.4}).
\end{proof}

\subsection{Asymptotically powerless}

By Jaeschke (1979),
see also Shorack and Wellner (2009) Chapter 16,
under $H_0$,
\begin{equation} \label{EV2}
b_n {\rm HC}_n-c_n \Rightarrow E_v^2,
\end{equation}
where $b_n = \sqrt{2 \log \log n}$,
$c_n = 2 \log \log n+\tfrac{\log \log \log n}{2}-\tfrac{\log(4 \pi)}{2}$
and $E_v(x) = \exp(-e^{-x})$
is the standardized extreme-value distribution function.

It follows from (\ref{EV2}) and a coupling argument that {\rm HC}$_n$ is asymptotically powerless if
\begin{equation} \label{conv}
\sup_{0 < p \leq \frac{1}{2}} \tfrac{b_n C_n(p)}{\sqrt{np}} \stackrel{p}{\rightarrow} 0,
\end{equation}
where $C_n(p) = \# \{ i: Y_{in} \sim F_{\theta}^{K_{in}}, p_i \leq p \}$.

\begin{proof}[Proof of Theorem \ref{thm3} when $b_H(\theta) < \beta$]
We show (\ref{conv}) using a discretization argument.
Let $m$ be a positive integer satisfying
\begin{equation} \label{bcond}
\tfrac{1}{m} < \delta(\theta) \mbox{ and } \beta > \tfrac{1}{2}+\tfrac{1}{m}.
\end{equation}
Let $M_1=[n^{-\frac{2}{m}},\tfrac{1}{2}]$,
$$M_j = [n^{-\frac{j+1}{m}},n^{-\frac{j}{m}})
\mbox{ for } 2 \leq j \leq m-1,
$$
and $M_m=(0,n^{-1}]$.
We show (\ref{conv}) by showing that 
\begin{equation} \label{show2}
\sup_{p \in M_j} \tfrac{b_n C_n(p)}{\sqrt{np}} \stackrel{p}{\rightarrow} 0, \quad 1 \leq j \leq m.
\end{equation}

Express $C_n(n^{-\frac{j}{m}})$ more simply as $C_{nj}$.
Fix $2 \leq j \leq m$ and for each $k=a \log n$ with $a \leq a_1$ let $\nu \geq \mu(0)$ be such that
$a I(\nu)=\frac{j-1}{m}$.
By (\ref{bHa})--(\ref{gH}),
\begin{eqnarray*}
P_{\theta}(Y_{in} \geq k \nu|K_{in}=k) & \leq & e^{k[\theta \nu-\psi(\theta)-I(\nu)+o(1)]} \cr
& = & n^{a[\theta \nu-\psi(\theta)-I(\nu)]+o(1)} \cr
& \leq & n^{b_H(\theta)-\frac{1}{2}-\frac{1}{2} a I(\nu)+J(a)+o(1)}, \cr
2 P_0(Y_{in} \geq k \nu| K_{in}=k) & = & e^{-k[I(\nu)+o(1)]} \cr
& \geq & n^{-\frac{j}{m}},
\end{eqnarray*}
uniformly over $a \leq a_1$.
Since
$$n \epsilon_n P_{\theta}(K_{in} \geq a_1 \log n) = n^{-\beta+o(1)},
$$
with $\epsilon_n=n^{-\beta}$,
by Lemma \ref{pre},
\begin{eqnarray*}
C_{nj} & \leq & [2+o_p(1)] n \epsilon_n \sum_{k=1}^{\infty} P_{\theta}(Y_{in} \geq k \nu|K_{in}=k) P(K_{in}=k) \cr
& \leq & n^{b_H(\theta)-\beta+\frac{1}{2}-\frac{1}{2}(\frac{j-1}{m})+o_p(1)} \sum_{k \leq a_1 \log n} e^{J(a)} P(K_{in}=k) \cr
& = & n^{\frac{1}{2}-\delta(\theta)-\frac{1}{2}(\frac{j-1}{m})+o_p(1)},
\end{eqnarray*}
and so by (\ref{bcond}),
for $2 \leq j \leq m-1$,
\begin{equation} \label{Q1}
\sup_{p \in M_j} \tfrac{b_n C_n(p)}{\sqrt{np}} \leq b_n C_{nj} n^{-\frac{1}{2}-\frac{1}{2}(\frac{j+1}{m})}
\leq n^{-\delta(\theta)+\frac{1}{m}+o_p(1)} \stackrel{p}{\rightarrow} 0.
\end{equation}
Moreover
$$C_{nm} \leq n^{-\delta(\theta)+\frac{1}{2m}+o_p(1)} \stackrel{p}{\rightarrow} 0,
$$
which implies $P_1(C_{nm}=0) \rightarrow 1$ and thus 
\begin{equation} \label{Q2}
\sup_{p \in M_m} \tfrac{b_n C_n(p)}{\sqrt{np}} \stackrel{p}{\rightarrow} 0.
\end{equation}

In addition,
$$C_{n1} \leq [1+o_p(1)] n \epsilon_n = n^{1-\beta+o_p(1)},
$$
and by (\ref{bcond}),
this implies 
\begin{equation} \label{Q3}
\sup_{p \in M_1} \tfrac{b_n C_n(p)}{\sqrt{np}} \leq n^{\frac{1}{2}-\beta+\frac{1}{m}+o_p(1)} 
\stackrel{p}{\rightarrow} 0.
\end{equation}
We conclude (\ref{show2}) from (\ref{Q1})--(\ref{Q3}). 
\end{proof}

\section{Proof of Theorem \ref{thm4}}

Let $p_{(1)}$ be the smallest p-value and let $p_{(1)k}$ be the smallest p-value among
$\{ p_i: K_{in} \geq k \}$.
Let $n_k = \# \{ i: K_{in} \geq k \}$.
The Bonferroni test statistic is $p_{(1)}$ whereas the rank-adjustment test statistic is
$$R_n = \min_{k \geq 1} n_k p_{(1)k}.
$$

Let $(\theta_H^*,a_{H \theta}^*)$ be such that
\begin{eqnarray} \label{6B}
b_B(\theta) & = & f_{H \theta}(\mu(\theta_H^*),a_{H \theta}^*), \\ \nonumber
& = & a[\theta \mu(\theta_H^*)-\psi(\theta)]-J(a_{H \theta}^*), \\ \label{6.2}
a_{H \theta}^* I(\mu(\theta_H^*)) & = & 1,
\end{eqnarray}
with $\tfrac{\theta_H^*}{\theta} \geq 1$.

Recall from (\ref{bf}) that $g(\nu,a)=a I(\nu)+J(a)$.
Let $(\theta^*,a_{\theta}^*)$ be such that
\begin{eqnarray} \label{6R}
b_R(\theta) & = & \max_{(\nu,a): g(\nu,a)=1} a[\theta \nu-\psi(\theta)] \\ \nonumber
& = & f_{\theta}(\mu(\theta^*),a_{\theta}^*), \\ \label{6.4}
g(\mu(\theta^*),a_{\theta}^*) &  = & 1,
\end{eqnarray}
with $\tfrac{\theta^*}{\theta} \geq 1$.

\begin{thm} \label{thm4}
Consider the sparse mixture problem in Theorem \ref{thm2}.
The Bonferroni test is asymptotically powerless when $\beta > b_B(\theta)$ and asymptotically powerful when
$\beta < b_B(\theta)$.
The rank-adjustment test is asymptotically powerless when $\beta > b_R(\theta)$ and
asymptotically powerful when $\beta < b_R(\theta)$.
\end{thm}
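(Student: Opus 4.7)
My plan is to recycle the Chebyshev-based ``powerful'' template of Theorem \ref{thm2} and the discretisation-based ``powerless'' template of Theorem \ref{thm3}, specialised to the statistics $p_{(1)}$ and $R_n$. Because each statistic rejects at a single threshold, the calculations differ from those for ${\rm HC}_n$ and ${\rm HC}_n^{\rm thres}$ only in that the inequalities defining $b_H$ and $b_J$ become equalities: the feasibility $aI(\nu)\leq 1$ for $b_H$ becomes the surface $aI(\nu)=1$ for $b_B$, and $g(\nu,a)\leq 1$ for $b_J$ becomes $g(\nu,a)=1$ for $b_R$. The rank-adjustment correction $1+\log n$ is subpolynomial and will be absorbed into the $n^{o(1)}$ exponents throughout.

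For the powerful side, when $\beta<b_B(\theta)$ (resp.\ $\beta<b_R(\theta)$) I would take the maximiser $(\theta_H^*,a_{H\theta}^*)$ of (\ref{6B})--(\ref{6.2}) (resp.\ $(\theta^*,a_\theta^*)$ of (\ref{6R})--(\ref{6.4})), restrict attention to indices with $K_{in}=k:=\lceil a_{H\theta}^*\log n\rceil$ (resp.\ $\lceil a_\theta^*\log n\rceil$), and apply Chebyshev's inequality to $\#\{i\colon K_{in}=k,\ p_i\leq p_n^*\}$, where $p_n^*\approx c/n$ for Bonferroni and $p_n^*\approx c/[n_k(1+\log n)]$ for rank-adjustment. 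Cram\'er's theorem, (A3) and Lemma \ref{pre} produce expected false-null counts of size $n^{b_B(\theta)-\beta+o_p(1)}$ and $n^{b_R(\theta)-\beta+o_p(1)}$ respectively; both diverge and therefore force the relevant statistic to cross its critical value with probability tending to one.

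For the powerless side at $\beta>b_B(\theta)$ I would bound
\begin{equation*}
E_1 C_n(c/n)\leq\sum_{k\leq a_1\log n}\epsilon_n n\,P(K_{1n}=k)\,P_\theta(p_i\leq c/n\mid K_{1n}=k),
\end{equation*}
with $C_n(p)=\#\{i\colon Y_{in}\sim F_\theta^{K_{in}},\ p_i\leq p\}$. The condition $p_i\leq c/n$ corresponds via Cram\'er's theorem to $aI(\nu)=1+o(1)$, so each summand is of order $n^{-\beta+f_{H\theta}(\nu,a)+o(1)}\leq n^{-\beta+b_B(\theta)+o(1)}$; a logarithmic-scale discretisation of $a\in[0,a_1]$ into $O(m)$ blocks as in the proof of Theorem \ref{thm3}, together with Lemma \ref{pre} for the tail $k>a_1\log n$, yields $E_1 C_n(c/n)\to 0$. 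The $H_1$-law of $p_{(1)}$ therefore matches its $H_0$-law up to $o(1)$, pushing ${\rm Risk}(p_{(1)})\to 1$. For the rank-adjustment statistic, the key observation is that for a false-null index $i$ the event $\{\exists k\colon K_{in}\geq k,\ p_i\leq c/[n_k(1+\log n)]\}$ collapses to $\{p_i\leq c/[n_{K_{in}}(1+\log n)]\}$, since $c/[n_k(1+\log n)]$ is non-decreasing in $k$; the analogous sum with constraint $g(\nu,a)=1$ bounds the false-null contribution by $n^{-\beta+b_R(\theta)+o(1)}\to 0$, giving risk $\to 1$ by the same coupling argument.

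The principal obstacle is the rank-adjustment powerless direction, where $R_n$ is a minimum of $O(\log n)$ correlated statistics $n_k p_{(1)k}$ and a naive union bound would lose an extra $\log n$. The reduction of the false-null event to the single choice $k=K_{in}$ eliminates this loss at the level of expected counts, after which the discretisation template of Theorem \ref{thm3} --- with $g$ replacing $g_H$ and $f_\theta$ replacing $f_{H\theta}$ --- completes the proof. Matching each sample-size block to its Cram\'er-theorem constraint and verifying the optimality characterisation (\ref{6R})--(\ref{6.4}) is then mechanical bookkeeping parallel to Sections 4 and 5 of the supplement.
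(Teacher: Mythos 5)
Your proposal is correct and follows essentially the same route as the paper's proof: optimize $(\nu,a)$ on the constraint surface ($aI(\nu)=1$ for Bonferroni, $g(\nu,a)=1$ for rank-adjustment), restrict to the block $K_{in}=\lceil a\log n\rceil$ and show the expected false-null exceedance count is $n^{b_\cdot(\theta)-\beta+o(1)}$ for the powerful direction, and use a first-moment bound with the identity $\min_k n_kp_{(1)k}=\min_i r_ip_i$ plus a discretization of the sample-size range (with Lemma \ref{pre} controlling the tails) for the powerless direction. The only cosmetic differences are that the paper establishes "at least one small p-value" via $E[(1-q_n)^{\#B_k}]$ rather than Chebyshev, and perturbs $a=a_\theta^*+\eta$ to make $n_kp_n^*$ polynomially small, details your $n^{o(1)}$ bookkeeping would absorb.
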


\subsection{Asymptotically powerful Bonferroni test}

Under $H_0$,
\begin{equation} \label{weak}
n p_{(1)} \Rightarrow {\rm Exp}(1),
\end{equation}
the exponential distribution with mean 1.
Hence to show that the Bonferroni test statistic is asymptotically powerful, 
it suffices to show that there exists $p_n^*=o(n^{-1})$ such that
\begin{equation} \label{strong}
P_1(p_{(1)} \leq p_n^*) \rightarrow 1.
\end{equation}

\begin{proof}[Proof of Theorem \ref{thm4} for $\beta < b_B(\theta)$]
Let $k = \lceil a_{H \theta}^* \log n \rceil$ and
$$p_n^* = 2P_0(Y_{in} \geq K_{in} \mu(\theta_H^*)|K_{in}=k).
$$
It follows from Cram\'{e}r's Theorem and (\ref{6.2}) that $p_n^*=o(n^{-1})$.
Let $B_k  = \{ i: K_{in}=k, Y_{in} \sim F_{\theta}^k \}$.
It follows from (A3) that
\begin{equation} \label{nkstar}
E(\# B_k) = n^{1-J(a_{H \theta}^*)-\beta+o(1)}.
\end{equation}
Let
$$q_n = P_{\theta}(Y_{in} \geq K_{in} \mu(\theta_H^*)|K_{in}=k).
$$
By Cram\'{e}r's Theorem, 
\begin{eqnarray*} 
q_n & = & e^{-k[I(\mu(\theta_H^*))-\theta \mu(\theta_H^*)+ \psi(\theta_H^*)+o(1)]} \\ \nonumber
& = & n^{-a_{H \theta}^* I(\mu(\theta_H^*)) + a_{H \theta}^*[\theta \mu(\theta_H^*)-\psi(\theta)]+o(1)} \\ \nonumber
& = & n^{-1+b_B(\theta)+J(a_{H \theta}^*)+o(1)}.
\end{eqnarray*}
Hence by (\ref{nkstar}),
$$q_n E(\# B_k) = n^{b_B(\theta)-\beta+o(1)} \rightarrow 0,
$$
and therefore
$$P_1(p_{(1)} > p_n^*) \leq E[(1-q_n)^{\#B_k}] \leq E(e^{-q_n (\#B_k)}) \rightarrow 0,
$$
and so (\ref{strong}) holds. 
\end{proof}

\subsection{Asymptotically powerless Bonferroni test}

Consider $\beta = b_B(\theta)+\delta(\theta)$ for some $\delta(\theta)>0$.
Let $\wtd p_{(1)} = \min \{ p_i: Y_{in} \sim F_{\theta}^{K_{in}} \}$.
To show that the Bonferroni test is asymptotically powerless,
by (\ref{weak}),
it suffices to show that
\begin{equation} \label{show6.2}
P_1(n \wtd p_{(1)} \leq \log n) \rightarrow 0.
\end{equation}

\begin{proof}[Proof of Theorem \ref{thm4} for $b_B(\theta) < \beta$]
For each $k = a \log n$ with $0 < a \leq a_1$, 
let $\nu$ be such that $a I(\nu) = 1-\tfrac{\delta(\theta)}{2}$.
By Cram\'{e}r's Theorem and Lemma \ref{pre},
for $1 \leq k \leq a_1 \log n$ with $n$ large,
\begin{eqnarray} \label{4a1}
p_1^* := 2 P_0(Y_{1n} \geq k \nu| K_{1n}=k) & = & n^{\frac{\delta(\theta)}{2}-1+o(1)} \geq n^{-1} \log n, \\ \label{4a2}
n^{1-\beta} P(K_{1n} \geq a_1 \log n) & = & n^{-\beta-J(a_1)+o(1)} = o(1).
\end{eqnarray}
For $1 \leq k \leq a_1 \log n$,
\begin{eqnarray} \label{4a3}
P_{\theta}(Y_{1n} \geq k \nu|K_{1n}=k) & \leq & e^{-k[I(\nu)-\theta \nu+\psi(\theta)]} \\ \nonumber
& \leq & n^{-1+b_B(\theta)+\frac{\delta(\theta)}{2}+J(a)+o(1)}.
\end{eqnarray}
By (\ref{4a1})--(\ref{4a3}) and Lemma \ref{pre},
\begin{eqnarray*}
P_1(n \wtd p_{(1)} \leq \log n) & \leq & n^{1-\beta} P_{\theta}(p_1 \leq p_1^*, K_{1n} \leq a_1 \log n)+o(1) \cr
& \leq & 2n^{1-\beta} \sum_{k \leq a_1 \log n} P_{\theta}(Y_{1n} \geq k \nu|K_{1n}=k)
P(K_{1n}=k) + o(1) \cr
& \leq & 2 n^{-\frac{\delta(\theta)}{2}+o(1)} \sum_{k \leq a_1 \log n} e^{J(a)} P(K_{1n}=k) +o(1) \cr
& \rightarrow&  0,
\end{eqnarray*}
and (\ref{show6.2}) holds.
\end{proof}

\subsection{Asymptotically powerful rank-adjustment test}

The rank-adjustment test
$$R_n = \min_{i \geq 1} r_i p_i = \min_{k \geq 1} n_k p_{(1)k},
$$
where $r_i = \# \{ j: K_{jn} \geq K_{in} \}$ and $n_k = \# \{ i: K_{in} \geq k \}$.
It follows from a Bonferroni argument that
$$P_0(R_n \leq \tfrac{1}{(\log n)^2}) \leq \sum_{i=1}^n P_0(p_i \leq \tfrac{1}{r_i (\log n)^2}) 
= \tfrac{1}{(\log n)^2} \sum_{i=1}^n \tfrac{1}{r_i} \rightarrow 0.
$$
Hence to show that the rank-adjustment test is asymptotically powerful,
it suffices to show that
\begin{equation} \label{PRn}
P_1(R_n \leq \tfrac{1}{(\log n)^2}) \rightarrow 1.
\end{equation}
Consider  $b_R(\theta)=\beta+\delta(\theta)$ for some $\delta(\theta)>0$.

\begin{proof}[Proof of Theorem \ref{thm4} for $\beta < b_R(\theta)$]
Let $(\theta^*,a_{\theta}^*)$ be such that
\begin{eqnarray} \label{4b1}
b_R(\theta) & = & a_{\theta}^*[\theta \mu(\theta^*)-\psi(\theta)], \\ \label{4b2}
g(\mu(\theta^*),a_{\theta}^*) & = & a_{\theta}^* I(\mu(\theta^*))+J(a_{\theta}^*)=1,
\end{eqnarray}
with $\tfrac{\theta^*}{\theta} \geq 1$.

Let $k = \lceil a \log n \rceil$,
where 
$$a = a_{\theta}^* + \eta, 
$$
for some $\eta>0$ to be further specified.

Let $B_k = \{ i: K_{in}=k, Y_{in} \sim F_{\theta}^k \}$.
It follows from (A3) and Lemma \ref{pre} that
\begin{eqnarray} \label{4b3} 
E(\# B_k) & = & n^{1-J(a)-\beta+o(1)}, \\ \label{4b4}
En_k & = & n^{1-J(a)+o(1)}.
\end{eqnarray}
By Cram\'{e}r's Theorem, (\ref{4b1}) and (\ref{4b2}), 
\begin{eqnarray} \label{4b5}
p_n^* & := & 2P_0(Y_{in} \geq k \mu(\theta^*)|K_{in}=k) \\ \nonumber
& \leq & e^{-k I(\mu(\theta^*))} \\ \nonumber
& \leq & n^{J(a_{\theta}^*)-1}, \\ \label{4b6}
q_n & := & P_{\theta}(Y_{in} \geq k \mu(\theta^*)|K_{in}=k) \\ \nonumber
& = & e^{-k[I(\mu(\theta^*))-\theta \mu(\theta^*)+\psi(\theta)+o(1)]} \\ \nonumber
& \geq & n^{J(a)-1+b_R(\theta)-\frac{\delta(\theta)}{2}},
\end{eqnarray}
for $\eta > 0$ small.

By (\ref{4b3})--(\ref{4b6}), 
\begin{eqnarray*} 
P_1(n_k p_n^* \leq \tfrac{1}{(\log n)^2}) & \rightarrow & 1, \cr
P_1(p_{(1)k} > p_n^*) & \leq & E[(1-q_n)^{\# B_k}] \cr
& \leq & E(e^{-q_n(\# B_k)}) \cr
& \leq & \exp(-n^{b_R(\theta)-\beta-\frac{\delta(\theta)}{2}+o(1)}) \cr
& = & \exp(-n^{\frac{\delta(\theta)}{2}+o(1)})
\rightarrow 0,
\end{eqnarray*}
and (\ref{PRn}) holds.
\end{proof}

\subsection{Asymptotically powerless rank-adjustment test}

Let $K^*=\max_i K_{in}$ and $r^* = \min_i r_i$.
Since
\begin{eqnarray*}
P_0(\min_i r_i p_i > \log n) & \leq & P_0(\min_{i: K_{in}=K^*} p_i > \tfrac{\log n}{r^*}) \cr
& = & E_0[(1-\tfrac{\log n}{r^*})_+^{r^*}] \cr
& \rightarrow & 0,
\end{eqnarray*}
to show that the rank-adjustment test is asymptotically powerless,
it suffices,
by a coupling argument,
to show that
\begin{equation} \label{s0}
P_1(\min_{i \in B_n} r_i p_i \leq \log n) \rightarrow 0,
\end{equation}
where $B_n = \{ i: Y_{in} \sim F_{\theta}^{K_{in}} \}$.

Consider $\beta=b_R(\theta)+\delta(\theta)$ for some $\delta(\theta)>0$.

\begin{proof}[Proof of Theorem \ref{thm4} for $b_R(\theta)< \beta$]
Let $a_{\delta}>0$ be such that $J(a_{\delta})=1-\tfrac{\delta(\theta)}{2}$.
It follows from (A3) that
\begin{eqnarray} \label{s1}
& & P_1(K_{in} > a_{\delta} \log n \mbox{ for some } i \in B_n) \\ \nonumber
& \leq & n^{1-\beta} P_{\theta}(K_{1n} > a_{\delta} \log n) \\ \nonumber
& = & n^{1-\beta-J(a_{\delta})+o(1)} \rightarrow 0.
\end{eqnarray}

Consider $0 < a \leq a_{\delta}$ and let $\nu_a \geq \mu(0)$ be such that 
\begin{equation} \label{aIJ}
a I(\nu_a)+J(a)=1-\tfrac{\delta(\theta)}{2}.
\end{equation}
For $k=a \log n$,
\begin{eqnarray} \label{s2}
p_k^* & := & 2 P_0(K_{1n} \geq k \nu_a|K_{1n}=k) \\ \nonumber
& = & e^{-k[I(\nu_a)+o(1)]} \\ \nonumber
& = & n^{J(a)+\frac{\delta(\theta)}{2}-1+o(1)}.
\end{eqnarray}

Partition $(a_0,a_{\delta}]$ into $m$ sub-intervals of equal width $M_j=(a^{(j-1)},a^{(j)}]$,
for some large $m$,
such that $a^{(0)}=a_0$ and $a^{(m)}=a_{\delta}$.
Let $n_j = \# \{ i: K_{in} \geq a^{(j)} \log n \}$.
It follows from (\ref{s2}) and $n_0 = n^{1+o_p(1)}$ that
\begin{equation} \label{s3}
P_1(\min_{k \leq a_0 \log n} p_k^* \leq \tfrac{\log n}{n_0}) \rightarrow 0.
\end{equation}
It follows from (\ref{s2}) and $n_j = n^{1-J(a^{(j)})+o_p(1)}$ that for $m$ large,
\begin{equation} \label{s4}
P_1(\min_{k/\log n \in M_j} p_k^* \leq \tfrac{\log n}{n_j}) \rightarrow 0, \quad 1 \leq j \leq m.
\end{equation}

By Cram\'{e}r's Theorem and (\ref{aIJ}),
\begin{eqnarray} \label{s5} 
& & P_1(p_i \leq p_{K_{in}}^* \mbox{ for some } i \in B_n \mbox{ with } K_{in} \leq a_0 \log n) \\ \nonumber
& \leq & n^{1-\beta-\min_{0 < a \leq a_0} a[I(\nu_a)-\theta \nu_a+\psi(\theta)]} \\ \nonumber
& \leq & n^{\frac{\delta(\theta)}{2}-\beta+b_R(\theta)} \rightarrow 0,
\end{eqnarray}
and that for $1 \leq j \leq m$ with $m$ large,
\begin{eqnarray} \label{s6}
& & P_1(p_i \leq p_{K_{in}}^* \mbox{ for some } i \in B_n \mbox{ with } \tfrac{K_{in}}{\log n} \in M_j) \\ \nonumber
& \leq & n^{1-\beta-J(a^{(j-1)})-\min_{a^{(j-1)} \leq a \leq a^{(j)}} a[I(\nu_a)-\theta \nu_a+\psi(\theta)]+o(1)} \\ \nonumber
& \leq & n^{\frac{\delta(\theta)}{2}-\beta+b_R(\theta)+J(a^{(j)})-J(a^{(j-1)})+o(1)} \\ \nonumber
& = & n^{-\frac{\delta(\theta)}{2}+J(a^{(j)})-J(a^{(j-1)})+o(1)} \rightarrow 0.
\end{eqnarray}

We conclude (\ref{s0}) from (\ref{s1}) and (\ref{s3})--(\ref{s6}).
\end{proof}

\end{document}